\documentclass[11pt]{amsart}

\usepackage[margin=2.5cm]{geometry}
\usepackage{color}
\usepackage{mathrsfs}
\usepackage{mathtools}
\usepackage{amsmath}
\usepackage{amssymb}
\usepackage{enumitem}
\usepackage{bbm}
\usepackage{esint}
\usepackage{nicefrac}
\numberwithin{equation}{section}
\usepackage[colorlinks,citecolor=green,linkcolor=red]{hyperref}

\usepackage[latin1]{inputenc}
\usepackage{tcolorbox}

\usepackage{todonotes}

\newtheorem{theorem}{Theorem}[section]

\newtheorem{lemma}[theorem]{Lemma}
\newtheorem{proposition}[theorem]{Proposition}

\theoremstyle{definition}
\newtheorem{definition}[theorem]{Definition}

\newtheorem{remark}[theorem]{Remark}




\newcommand{\N}{\mathbb{N}}

\newcommand{\R}{\mathbb{R}}
\renewcommand{\S}{\mathbb{S}}
\newcommand{\Z}{{\rm Z}}

\newcommand{\sfd}{{\sf d}}

\newcommand{\rr}{\mathbb R}
\newcommand{\restr}[1]{\lower3pt\hbox{$|_{#1}$}}
\newcommand{\down}{\downarrow}              

\newcommand{\eps}{\varepsilon}  
\newcommand{\nchi}{{\raise.3ex\hbox{$\chi$}}}
\newcommand{\weakto}{\rightharpoonup}

\newcommand{\fr}{\hfill$\blacksquare$}  

\newcommand{\HH}{\mathcal{H}} 

\newcommand{\LIP}{\mathrm{LIP}}
\newcommand{\Lip}{\mathrm{Lip}}
\newcommand{\lip}{\mathrm{lip}}

\newcommand{\diam}{\mathrm{diam}}

\newcommand{\RCD}{{\sf RCD}}

\newcommand{\mm}{\mathfrak m}

\renewcommand{\limsup}{\varlimsup}
\renewcommand{\liminf}{\varliminf}
\renewcommand{\d}{{\rm d}}
\newcommand{\X}{{\rm X}}

\newcommand{\Xdm}{(\X,\sfd,\mm)}
\newcommand{\rmCh}{{\rm Ch}}

\newcommand{\supp}{{\rm supp}}


\renewcommand{\phi}{\varphi}
\newcommand{\avr}{{\sf AVR}}


\setcounter{tocdepth}{2}
\title[]{Generalized existence of extremizers for the sharp $p$-Sobolev inequality on Riemannian manifolds with nonnegative curvature}

\author[]{Francesco Nobili} 
\address{Universit\'a di Pisa, Dipartimento di Matematica, Largo Bruno Pontecorvo 5,
56127 Pisa, Italy}
\email{\url{francesco.nobili@dm.unipi.it}}

\author[]{Ivan Yuri Violo}
\address{Universit\'a di Pisa, Dipartimento di Matematica, Largo Bruno Pontecorvo 5,
56127 Pisa, Italy}
\email{\url{ivanyuri.violo@dm.unipi.it}}

%
%
%
%
\begin{document}
\begin{abstract}
We study the generalized existence of extremizers for the sharp $p$-Sobolev inequality on noncompact Riemannian manifolds in connection with nonnegative curvature and Euclidean volume growth assumptions. Assuming a nonnegative Ricci curvature lower bound, we show that almost extremal functions are close in gradient norm to radial Euclidean bubbles. In the case of nonnegative sectional curvature lower bounds, we additionally deduce that vanishing is the only possible behavior, in the sense that almost extremal functions are almost zero globally.  Our arguments rely on nonsmooth concentration compactness methods and Mosco-convergence results for the Cheeger energy on noncompact varying spaces, generalized to every exponent $p\in (1,\infty)$.
\end{abstract}%
\maketitle
 \allowdisplaybreaks
\setcounter{tocdepth}{2}
\tableofcontents

\section{Introduction}
In this note we study the generalized existence of extremal functions for Sobolev inequalities on $d$-dimensional Riemannian manifolds $(M,g)$, $d \ge 2$, satisfying the following assumptions
\begin{equation}
    {\sf Ric}_g \ge 0, \qquad {\sf AVR }(M)\coloneqq \lim_{R\to\infty}\frac{{\rm Vol}_g(B_{R}(x))}{\omega_d R^d }>0, \label{eq:AVR}
\end{equation}
for $x \in M$. The constant ${\sf AVR }(M)$ is called the asymptotic {ratio} growth and, thanks to the Bishop-Gromov monotonicity, it holds that ${\sf AVR}(M)\in[0,1]$ and the limit exists and it is independent of $x$. The class \eqref{eq:AVR} is rich and contains many examples besides the Euclidean space $\R^d$ such as: Ricci flat asymptotical locally Euclidean manifolds and, in dimension four, gravitational instantons (\cite{Hawking77}). We refer to \cite{EguchiHanson79} for a concrete example of the so-called Eguchi-Hanson metric. Moreover, it was shown in \cite{Menguy00} that there are infinite topological types. Besides, spaces satisfying \eqref{eq:AVR} constitute an important class in geometric analysis and further examples are also weighted convex cones (see \cite{CabreRosOtonSerra16} and references therein) and, as we will see later, cones arising as limits of manifolds with Ricci curvature lower bounds.

\medskip

Starting from the works \cite{Ledoux00,Xia01}, it became clear that \eqref{eq:AVR} is a natural setup for the study of Sobolev inequalities of Euclidean type. Indeed, and more recently, it was shown in \cite{BaloghKristaly21} (see also \cite{Kristaly23} revisiting \cite{C-ENV04}) the validity for every $p\in(1,d)$ of the following
\begin{equation}
\| u\|_{L^{p^*}(M)} \le {\sf AVR }(M)^{-\frac{1}{d}}S_{d,p}\| \nabla u\|_{L^{p}(M)},\qquad \forall u \in \dot W^{1,p}(M).
\label{eq:AVR sob intro}
\end{equation}
on manifolds satisfying \eqref{eq:AVR}. Here we denoted $p^*\coloneqq pd/(d-p)$ the Sobolev conjugate exponent, by $S_{d,p}>0$ the sharp Euclidean Sobolev constant explicitly computed by \cite{Aubin76-2,Talenti76} (see \eqref{eq:Sobolev constant} for the precise value) and by $\dot W^{1,p}(M) \coloneqq \{u \in L^{p^*}(M) \colon |\nabla u|\in L^p(M)\}$ the homogeneous Sobolev space. Inequality \eqref{eq:AVR sob intro} is sharp (\cite{BaloghKristaly21}), and rigid as it was recently proved in    \cite{NobiliViolo24_PZ} (see \cite{CatinoMonticelli22,NobiliViolo24} for previous results with $p=2$). By rigid, we mean that equality holds in \eqref{eq:AVR sob intro} for some $0\neq u \in \dot W^{1,p}(M)$ if and only if $M$ is isometric to $\R^d$. Therefore, by the characterization of equality in the sharp Euclidean Sobolev inequality \cite{Aubin76-2,Talenti76} we deduce that $u$ has the following form
\begin{equation}
    u_{a,b,z_0}= \frac{a}{\Big(1+(b\sfd_g(\cdot,z_0)^{\frac{p}{p-1}}\Big)^{\frac{d-p}{p}}},
\label{eq:bubble}
\end{equation}
for some $a\in\R,b>0,z_0 \in M$. The above functions are usually called \emph{Euclidean bubbles} due to their radial shape. Finally, we recall that ${\sf AVR}(M)=1$ occurs if and only if $M$ is isometric to $\R^d$ (see \cite{Colding97}). Hence, a direct corollary of this rigidity principle is that, if ${\sf AVR}(M)\in(0,1)$, then there are no nonzero extremal functions for \eqref{eq:AVR sob intro}. We refer to \cite{Nobili24_overview} for an overview of these results and more references.
\medskip 

\noindent\textbf{Main results}. The Sobolev inequality \eqref{eq:AVR sob intro}, even though it does not admit nonzero extremizers, is sharp on every Riemannian manifold as in \eqref{eq:AVR}. Therefore, by definition, it is always possible to consider extremizing sequences:
\[
    0\neq u_n \in \dot W^{1,p}(M)\quad \text{so that}\quad \frac{\|u_n\|_{L^{p^*}(M)}}{\|\nabla u_n\|_{L^p(M)}} \to {\sf AVR}(M)^{-\frac 1p}S_{d,p},
\]
as $n\uparrow\infty$. In our previous work \cite{NobiliViolo24}, which is limited to the exponent $p=2$, we proved that there are $a_n\in\R,b_n>0,z_n$ so that
\[
    \lim_{n\uparrow\infty}\frac{\| \nabla (u_n -u_{a_n,b_n,z_n})\|_{L^2(M)} }{\|\nabla u_n\|_{L^2(M)}} \to 0.
\]
This means that the family of Euclidean bubbles actually completely captures the behaviors of extremizing sequences. However, since $u_n$ cannot converge in $\dot W^{1,2}(M)$ to some Euclidean bubble (unless $M$ is isometric to $\R^d$), the parameters $a_n,b_n,z_n$ are necessarily so that $u_{a_n,b_n,z_n}$ (renormalized) is either vanishing or is lacking compactness in the $\dot W^{1,2}(M)$ topology. 

\medskip 

In this note, we shall pursue the following two goals:
\begin{itemize}
   \item  extend the results of \cite{NobiliViolo24} to any exponent $p\neq 2$;
   \item relate geometric and curvature assumptions to a finer study of the behaviors of extremizing sequences.
\end{itemize}
We next present our main results and explain accurately, after the statements, how the above goals are achieved. 
\begin{theorem}\label{thm:qualitative Sob Manifold}
For all $\eps>0, V\in(0,1), d>1$ and $p \in (1,d)$, there exists $\delta\coloneqq \delta(\eps,p,d,V)>0$ such that the following holds.
Let $(M,g)$ be a noncompact $d$-dimensional Riemannian manifold with ${\sf Ric}_g \ge 0$ and ${\sf AVR}(M) \in( V,1]$ and let $0\neq u\in \dot W^{1,p}(M)$ be satisfying
\[
  \frac{\| u\|_{L^{p^*}(M)}}{\| \nabla u\|_{L^p(M)}} > {\sf AVR}(M)^{-\frac 1d} S_{d,p}- \delta.
\]
Then, there are $a\in\R,b>0$ and $z_0 \in M$ so that 
\begin{equation}
    \frac{\| \nabla( u - u_{a,b,z_0})\|_{L^{p}(M)}}{\| \nabla u\|_{L^{p}(M)}} \le \eps.\label{eq:gradient p stability}
\end{equation}
\end{theorem}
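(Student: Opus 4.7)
The plan is to argue by contradiction, combining a nonsmooth concentration-compactness argument in the varying metric-measure space framework with the Sobolev rigidity on $\RCD(0,d)$ spaces from \cite{NobiliViolo24_PZ}. Suppose the conclusion fails: there exist $\eps_0>0$, $V\in(0,1)$ and sequences of noncompact $d$-manifolds $(M_n,g_n)$ with $\avr(M_n)\in(V,1]$ and $0\neq u_n\in\dot W^{1,p}(M_n)$, normalized so that $\|\nabla u_n\|_{L^p}=1$, satisfying $\|u_n\|_{L^{p^*}}\geq\avr(M_n)^{-1/d}S_{d,p}-1/n$, yet $\|\nabla(u_n-u_{a,b,z_0})\|_{L^p}\geq\eps_0$ for every admissible triple $(a,b,z_0)$.

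Following Lions, I would first choose dilation radii $r_n>0$ and base points $z_n\in M_n$ such that, after rescaling to $\tilde g_n\coloneqq r_n^{-2}g_n$ with the conformal weight keeping the Sobolev quotient invariant, the concentration function of $|\nabla u_n|^p\d\vol$ reaches a prescribed fraction (say $1/2$) on the unit ball around $z_n$. This normalization pins down the scale at which compactness is lost. By the Bishop-Gromov precompactness theorem (valid under $\mathsf{Ric}\geq 0$), the sequence $(M_n,\tilde g_n,z_n)$ converges, up to a subsequence, in pointed measured Gromov-Hausdorff topology to an $\RCD(0,d)$ space $(\X,\sfd,\mm,z_\infty)$ with $\avr(\X)\geq V>0$ by lower semicontinuity of $\avr$.

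Next, using the Mosco-convergence of the $p$-Cheeger energies on noncompact varying $\RCD$ spaces announced in the abstract (extending \cite{NobiliViolo24} from $p=2$ to $p\in(1,\infty)$), I would extract a limit $u_\infty\in\dot W^{1,p}(\X)$ and pass to the limit in the Sobolev ratio. The absence of vanishing (enforced by the concentration-function normalization) and the exclusion of dichotomy (via a $p$-energy additivity argument exploiting the sharpness of \eqref{eq:AVR sob intro}) guarantee $u_\infty\not\equiv 0$ and that it saturates the sharp Sobolev inequality on $\X$ with constant $\avr(\X)^{-1/d}S_{d,p}$. Applying the rigidity of \cite{NobiliViolo24_PZ} forces $(\X,\sfd,\mm)$ to be isometric to $\R^d$ and $u_\infty$ to be a Euclidean bubble. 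A Brezis-Lieb type decomposition of the $p$-energy, together with convergence of the $L^{p^*}$-norms, upgrades weak to strong convergence of the gradients and yields parameters $(a_n,b_n,z_n)$ with $\|\nabla(u_n-u_{a_n,b_n,z_n})\|_{L^p}\to 0$ after undoing the rescaling, contradicting the assumption.

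The hardest step is the concentration-compactness analysis for $p\neq 2$ in the nonsmooth varying-space setting. The Hilbertian tools of \cite{NobiliViolo24} are unavailable, so one must build a profile decomposition with control on each concentration scale, rule out dichotomy using only the sharpness of the inequality, and transfer the profile to a nontrivial limit on the possibly singular $\RCD$ space. This in turn requires a Mosco-convergence theorem for $p$-Cheeger energies on noncompact pmGH-limits compatible with the Sobolev scaling, which is the key technical ingredient of the paper.
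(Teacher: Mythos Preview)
Your overall strategy---contradiction, rescaling to fix a concentration level, pmGH-precompactness, concentration compactness to exclude vanishing/dichotomy, passing to a limit extremal, then invoking rigidity---matches the paper's route. However, there is one genuine error that would derail the argument as written.

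You claim that the rigidity of \cite{NobiliViolo24_PZ} forces the pmGH limit $(\X,\sfd,\mm)$ to be isometric to $\R^d$. This is false in general: the limit of rescaled Riemannian manifolds is only an $\RCD(0,d)$ space, and the rigidity theorem on that class (Theorem~1.6/1.7 in \cite{NobiliViolo24_PZ}) yields only that $\X$ is a \emph{metric measure cone} with some tip $z_0$, with $\avr(\X)=\lim_n\avr(M_n)$, and that $u_\infty$ is a radial bubble $a(1+b\,\sfd_\X(\cdot,z_0)^{p/(p-1)})^{(p-d)/p}$ on that cone. The Euclidean conclusion is specific to smooth manifolds and does not survive the limit. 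This matters because your final step (``undoing the rescaling'') implicitly assumes you can read off a Euclidean bubble; instead you must transfer the radial profile on the cone back to the approximating manifolds. The paper does this by showing that $f\circ\sfd_{M_n}(\cdot,z_n)$, with $f(t)=a(1+bt^{p/(p-1)})^{(p-d)/p}$, converges $L^{p^*}$-strongly and in $p$-energy to $u_\infty$ (a continuity statement for radial functions under pmGH convergence), and then invokes the \emph{linearity} of $W^{1,p}$-strong convergence on varying spaces---a nontrivial fact for $p\neq 2$, proved via Clarkson's inequalities---to conclude $\|\nabla(u_{\sigma_n}-f\circ\sfd_{M_n}(\cdot,z_n))\|_{L^p}\to 0$.

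Two smaller points. First, your concentration normalization uses the $L^p$-mass of $|\nabla u_n|$; the paper instead fixes a level $1-\eta$ of the L\'evy function of $|u_n|^{p^*}$, which is what feeds directly into excluding dichotomy and into the collapsing density bound needed for Gromov precompactness (the paper controls $\mu_n(B_1(y_n))$ from above via a reverse-Sobolev argument, not only from below). Second, the upgrade from weak to strong convergence of gradients is not obtained via Brezis--Lieb on the energies, but rather falls out of the equality chain in the concentration-compactness endgame (all inequalities become equalities, forcing the defect measure to vanish), after which the $L^p$-strong convergence of $|\nabla u_n|$ is a separate lemma.
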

The above result fully extends \cite[Theorem 1.4]{NobiliViolo24} to any exponent $p\neq 2$.  The strategy boils down to generalized concentration compactness methods in the spirit of \cite{Lions84,Lions85} exploiting stability properties of non-smooth $\RCD(0,N)$ spaces (see Section \ref{sec:RCD}). In particular, Theorem \ref{thm:qualitative Sob Manifold} will be deduced from a more general analysis carried in Theorem \ref{thm:qualitative Sob} on $\RCD$ spaces covering, thus, also weighted Riemannian manifolds with nonnegative Barky-\'Emery Ricci curvature. Specifically, the two main ingredients are:
\begin{enumerate}[label=\alph*)]
    \item A general concentration compactness principle for $W^{1,p}$-functions along a sequence of metric measure spaces, that we develop in this note and extending the one of \cite{NobiliViolo21,NobiliViolo24} which was limited to $p=2.$
    \item The characterization of equality cases of $p$-Sobolev inequalities on nonsmooth spaces which we proved in \cite[ii) in Theorem 1.5]{NobiliViolo24_PZ}.
\end{enumerate}
To deal with a) we need to develop some technical tools about $W^{1,p}$-convergence on varying spaces, which we believe to be of independent interest (see Section \ref{sec:Mosco}). Mainly we  obtain the Mosco-convergence for the $p$-Cheeger energies on varying $\RCD(K,N)$ spaces. This extends the work \cite{AmbrosioHonda17}, for $N<\infty,$  by removing assumptions of finite reference measure or the presence of a common isoperimetric profile. Furthermore, we prove the linearity of the $W^{1,p}$-strong convergence and  the strong $L^p$-convergence of gradients.  To our best knowledge, these results were not known besides for the exponent $p=2$.

\medskip 

We next present our second main result where we further assume nonnegative sectional curvature. When the manifold is not isometric to $\R^d$, this more stringent assumption effectively narrows the range of possible behaviors for minimizing sequences.
\begin{theorem}\label{thm: stability Sect manifold}
    Let $(M,g)$ be a noncompact $d$-dimensional Riemannian manifold with ${\sf Sect}_g\ge 0$ and ${\sf AVR}(M) \in(0,1)$. Then, for every $\eps>0$ there exists $\delta = \delta(M,\eps)>0$ so that the following holds: if $0\neq u\in \dot W^{1,p}(M)$ satisfies
    \[
        \frac{\| u\|_{L^{p^*}(M)}}{\| \nabla u\|_{L^p(M)}} > {\sf AVR}(M)^{-\frac 1d} S_{d,p}- \delta,
    \]
    then, there are $a\in\R,b>0$ and $z_0 \in M$ so that
    \begin{equation}\label{eq:stab sect}
        \frac{\| \nabla( u - u_{a,b,z_0})\|_{L^{p}(M)}}{\| \nabla u\|_{L^{p}(M)}} \le \eps,\qquad\text{and} \qquad    |u_{a,b,z_0}|\le \|u\|_{L^{p^*}(M)}\eps, \quad \text{in $M$}
    \end{equation}
    (or equivalently $b<\eps).$
    Furthermore, writing $M=\rr^k\times N$ for some $0\le k<d$ and some $(d-k)$-dimensional Riemannian manifold $(N,h)$ that does not split isometrically any line, we can take
    \begin{equation} \label{eq:stab sect center}
        z_0\in \rr^k\times \{y_0\},
    \end{equation}
    for any fixed $y_0\in N$ (with $\delta$ depending also on $y_0$). 
\end{theorem}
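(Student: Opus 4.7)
The plan is to argue by contradiction, combining Theorem~\ref{thm:qualitative Sob Manifold} with rigidity and a blow-down argument that exploits the structure of $M$ under ${\sf Sect}_g\ge 0$.

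First, since ${\sf Sect}_g\ge 0$ implies ${\sf Ric}_g\ge 0$, by the (iterated) Cheeger--Gromoll splitting theorem we write $M=\rr^k\times N$ with $N$ admitting no isometrically split line. Supposing the statement fails, fix $\eps_0>0$, $y_0\in N$, and a sequence $u_n\in\dot W^{1,p}(M)$, normalized to $\|u_n\|_{L^{p^*}}=1$ with Sobolev ratio tending to $\avr(M)^{-1/d}S_{d,p}$, such that no bubble with $z_0\in\rr^k\times\{y_0\}$ and $b<\eps_0$ approximates $u_n$ within $\eps_0$ in gradient norm. Theorem~\ref{thm:qualitative Sob Manifold} yields parameters $a_n, b_n, z_n=(x_n,y_n^*)$ with $\|\nabla(u_n-u_{a_n,b_n,z_n})\|_{L^p}\to 0$, and by translating along the Euclidean factor we may take $x_n=0$.

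The core of the argument is to establish that $b_n\to 0$ and that $y_n^*$ remains bounded; the sought bubble with $z_0\in\rr^k\times\{y_0\}$ is then produced by replacing $y_n^*$ with $y_0$ at negligible cost. To rule out $b_n\ge c>0$, I would split into two subcases. If $y_n^*$ stays bounded, then $y_n^*\to y^*\in N$ up to subsequence, and using the Mosco convergence of $p$-Cheeger energies from Section~\ref{sec:Mosco} one extracts a strong $\dot W^{1,p}(M)$ limit $u_\infty$ that is a nonzero bubble attaining the sharp Sobolev ratio on $M$; the rigidity ii) of \cite[Theorem~1.7]{NobiliViolo24_PZ} then forces $M$ to be isometric to $\rr^d$, contradicting $\avr(M)\in(0,1)$. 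If instead $y_n^*\to\infty$ in $N$, Gromov's precompactness produces a pmGH limit $\rr^k\times N_\infty$; Bishop--Gromov monotonicity gives $\avr(M_\infty)\ge \avr(M)$, and passing to the limit via Mosco convergence provides a nonzero extremizer on $M_\infty$, which by rigidity on the limit $\RCD$ space must be $\rr^d$, hence $\avr(M)=1$, a contradiction.

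Having established $b_n\to 0$, the same dichotomy applies to the center. To rule out $y_n^*\to\infty$, I would rescale by $\lambda_n:=b_n^{(p-1)/p}$, which sends the bubble to unit scale: under ${\sf Sect}_g\ge 0$ (via Toponogov and Cheeger--Colding) the blown-down pointed spaces converge pmGH to a tangent cone at infinity $C_\infty$ with $\avr(C_\infty)=\avr(M)<1$, while the rescaled functions limit to a unit bubble extremizing the sharp Sobolev inequality on $C_\infty$; rigidity then forces $C_\infty=\rr^d$, a contradiction. Thus $\sfd_N(y_n^*,y_0)\le R$ for some $R$ along a subsequence. Setting $z_n':=(0,y_0)$, an elementary scaling computation on the radial profile $\phi_b(r)=(1+b r^{p/(p-1)})^{-(d-p)/p}$ shows that
\[
\|\nabla(u_{a_n,b_n,z_n}-u_{a_n,b_n,z_n'})\|_{L^p}\to 0,
\]
since moving the center by a bounded distance becomes negligible once $b$ vanishes (the bubble spreads on scale $b^{-(p-1)/p}\to\infty$). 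Combining, $z_n'\in\rr^k\times\{y_0\}$ and $b_n<\eps_0$ satisfy \eqref{eq:stab sect} for $n$ large, contradicting the standing assumption.

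The hard part is the blow-down step and the use of Mosco convergence on a sequence of noncompact pointed $\RCD(0,d)$ spaces whose limit may be a singular metric cone: one needs both the Mosco convergence of $p$-Cheeger energies and the rigidity of the sharp $p$-Sobolev inequality on such nonsmooth limits, together with careful bookkeeping of $\avr$ via Bishop--Gromov and of the non-splitting of $N$, to close the contradiction in each subcase.
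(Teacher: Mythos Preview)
Your contradiction scheme has the right shape, and the subcase $b_n\ge c$ with $y_n^*$ bounded is fine (the limit stays on the smooth manifold $M$, where rigidity does force $M=\rr^d$). But the other two subcases do not close. The rigidity statement for the sharp $p$-Sobolev inequality on $\RCD(0,N)$ spaces (\cite[Theorem~1.6]{NobiliViolo24_PZ}) says only that a space admitting a nonzero extremizer is a \emph{metric measure cone}, with the extremizer a bubble centred at a tip; it does \emph{not} force the space to be $\rr^d$ unless you already know it is a smooth manifold. In your subcase $b_n\ge c$, $y_n^*\to\infty$, the pmGH limit $M_\infty$ is a possibly singular $\RCD(0,d)$ space, and concluding that it is a cone with $\avr(M_\infty)=\avr(M)<1$ is no contradiction at all. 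Likewise in the blow-down step for $b_n\to0$, $y_n^*\to\infty$: the limit $C_\infty$ is a tangent cone at infinity, hence already a cone, so finding a bubble extremizer on it centred at the tip gives nothing new.

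What you are missing is the ${\sf CBB}(0)$-specific geometric input that the paper actually uses: combining the uniqueness of the asymptotic cone (Theorem~\ref{thm:asymptotic cone CBB}) with \cite[Lemma~4.2]{AntonelliBrueFogagnoloPozzetta22}, one obtains a \emph{strict} density gap --- if $\X$ is ${\sf CBB}(0)$, not a cone, and its non-Euclidean factor splits no line, then any pmGH limit along diverging basepoints (and any point of the asymptotic cone at unit distance from the tip) has volume density $\ge\avr(\X)+\delta$ for a fixed $\delta>0$. This strict gap is then played against the equality $\avr(Y)=\avr(\X)$ coming from Theorem~\ref{thm:general RCD} together with Bishop--Gromov on the cone limit $Y$. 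Concretely, the paper first uses this to verify hypothesis~\eqref{eq:eps not a cone}, which via Theorem~\ref{thm:refined stability} already forces $b_n\to0$ (bypassing your first dichotomy entirely), and then a second time to show $\sigma_n\,\sfd(z_n,(0,y_0))\to 0$, after which the center can be moved to $(0,y_0)$ at no cost in the rescaled picture. Without this strict inequality there is no contradiction; rigidity alone is not enough. This is exactly why \eqref{eq:stab sect center} fails under only ${\sf Ric}_g\ge 0$, cf.\ Remark~\ref{rmk:counter}.
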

{ Some comments on the above statement are in order:
\begin{enumerate}[label=\roman*)]
    \item The second inequality in \eqref{eq:stab sect} is saying that a function which is almost extremal for the Sobolev inequality in $M$, must be almost zero in the sense that it is  $\dot W^{1,p}$-close to a bubble which is close to zero   uniformly in $M.$ In other words minimizing sequences must be very diffused on the whole manifold.
    \item The second part of the theorem  instead says roughly that almost extremal functions do not escape at infinity.  More precisely \eqref{eq:stab sect center} says that an almost extremal function must be close to a bubble that can be centred at any chosen point $z_0$, up to isometries of $M$ and taking $\delta$ sufficiently small. In other words, extremizing sequences diffuse faster than the rate at which they might escape to infinity.
    \item The exact same result of Theorem \ref{thm: stability Sect manifold} holds for $d$-dimensional convex subsets of $\rr^n$  with positive asymptotic volume ratio, which are not cones (the sharp Sobolev inequality on noncompact convex subsets of $\rr^n$ is a consequence of \cite[Theorem 1.13]{NobiliViolo21}). In fact, we prove the result for the more general class of Alexandrov spaces with nonnegative sectional curvature (that are not cones), see Theorem \ref{thm:stability CBB}.
    \item  It is worth to observe that \eqref{eq:stab sect center} does not follow from \eqref{eq:stab sect}. Indeed  $\|u_{a,b,z_0}-u_{a,b,z_1}\|_{L^{p^*}(M)}\ge \|u_{a,b,z_0}\|_{L^{p^*}(M)}/2$, no matter what $a$ and $b$ are, provided $z_0,z_1$ are sufficiently far apart.
    \item The conclusion \eqref{eq:stab sect} holds under a weaker assumption on the volume of small balls, see Theorem \ref{thm:refined stability}.
    \item The second part of the statement of Theorem \ref{thm: stability Sect manifold} does not hold if we assume only non-negative Ricci curvature, see Remark \ref{rmk:counter}.
\end{enumerate}
}
The proof of Theorem \ref{thm: stability Sect manifold} rests on the rigidity properties of blow-downs (also called asymptotic cones) for spaces with non-negative sectional curvature.
Similar ideas were recently employed to prove  existence results for isoperimetric sets on noncompact manifolds \cite{AntonelliBrueFogagnoloPozzetta22} (see also \cite{AntonelliPozzetta23}).
\section{Preliminaries}
We start by introducing some relevant notation. For every $N>1$ and $p\in(1,\infty)$ we denote
\begin{equation}
  S_{N,p} \coloneqq \frac 1N\left(\frac{N(p-1)}{N-p}\right)^{\frac{p-1}p}\left(\frac{\Gamma(N+1)}{N\omega_N\Gamma(N/p)\Gamma (N+1-d/p)}\right)^{\frac 1N}, \label{eq:Sobolev constant}  
\end{equation}
where $\omega_N \coloneqq \pi^{N/2} / \Gamma(N/2+1)$ and $\Gamma(\cdot)$ is the Gamma function.

A metric measure space is a triple  $(\X,\sfd,\mm)$ where $(\X,\sfd)$ is a complete and separable metric space and $\mm$ is a non-negative, non-zero and boundedly finite Borel measure. By $C(\X),C_b(\X),C_{bs}(\X)$ we denote respectively the space of continuous functions, continuous and bounded functions and continuous and boundedly supported functions on $\X$. By $\Lip(\X),\Lip_{bs}(\X)$, we denote respectively the collection of Lipschitz functions and boundedly supported Lipschitz functions and by $\lip(u)$ the local Lipschitz constant of $u\colon \X\to \R$. For all $p \in (1,\infty)$, we denote by $L^p(\mm),L^p_{loc}(\mm)$ respectively the space of $p$-integrable functions and $p$-integrable functions on a neighborhood of every point (up to $\mm$-a.e.\ equality relation) on $\X$.

\subsection{Calculus on ${\sf RCD}$ spaces}\label{sec:RCD}
We define the $p$-\emph{Cheeger} energy by
\[
\rmCh_p(u)\coloneqq \inf\left\{ \liminf_{n\to\infty}\int \lip(u_n)^p\,\d \mm \colon (u_n)\subseteq \Lip(\X), \, u_n \to u \text{ in }L^p(\mm)\right\}.
\]
The Sobolev space $W^{1,p}(\X)$ is defined as the collection of $u \in L^p(\mm)$ so that $\rmCh_p(u)<\infty$ equipped with the usual norm $\|u\|_{W^{1,p(\X)}}^p \coloneqq \|u\|_{L^p(\mm)}^p + \rmCh_p(u)$. We refer to \cite{Cheeger00,Shan00} for a general introduction while here we follow the equivalent axiomatization given by \cite{AmbrosioGigliSavare11-3}. Recall that we have the representation
\[
\rmCh_p(u) = \int |\nabla u|^p\,\d \mm,
\]
for a suitable function $|\nabla u| \in L^p(\mm)$ called minimal $p$-weak upper gradient. Thanks to locality \cite{AmbrosioGigliSavare11-3} of minimal $p$-weak upper gradients, we recall the space $W^{1,p}_{loc}(\X)$ as the subset of $u \in L^p_{loc}(\mm)$ so that $\eta u \in W^{1,p}(\X)$ for all $\eta \in \Lip_{bs}(\X)$. By slight abuse of notation, we shall write $\|\nabla u\|_{L^p(\mm)}$ in place of $\| |\nabla u|\|_{L^p(\mm)}$. We also do not insist on the dependence of $|\nabla u|$ on the exponent $p$ (see, e.g., \cite{DiMarinoSpeight13}), as we shall only deal with settings where this does not occur, see \cite{Cheeger00,GigliHan14,GigliNobili21}.

We also recall the notion of functions of locally bounded variation following \cite{Miranda03,AmbrosioDiMarino14}. If $\varnothing \neq U\subset\X$ is open and $u\in L^1_{loc}(\mm)$, we define
\[
    |D u|(U) \coloneqq \inf \Big\{\liminf_{n\uparrow\infty} \int \lip\, u_n\, \d \mm \colon (u_n)\subset \Lip_{loc}(U),\, u_n \to u \text{ in }L^1_{loc}(U)\Big\}.
\]
{If the above is locally finite, } it can be shown that it extends to a nonnegative Borel measure to the whole sigma-algebra of Borel sets (\cite{AmbrosioDiMarino14}). We then say that $u \in BV_{loc}(\X)$ provided $|Du|$ is finite on a neighborhood of every point. We simply say that $u$ is a function of bounded variation, writing $u \in BV(\X)$, provided $u\in L^1(\mm)$ and $|Du|(\X)<\infty$. We also refer to \cite{DiMarinoPhD,Martio16-2,NobiliPasqualettoSchultz21,BrenaNobiliPasqualetto22} for other equivalent approaches.

In this note we are interested in Sobolev inequalities on spaces with synthetic Ricci curvature lower bounds. We assume the reader to be familiar with the theory and concepts of ${\sf RCD}$-spaces. In the following parts, we shall limit ourselves to recalling only the relevant properties. We refer, for general introductions and the relevant references to the surveys \cite{Villani2016,AmbICM,Gigli23_working,Sturm24_Survey}.

If $\Xdm$ is an ${\sf RCD}(0,N)$ space  for some  $N>1$ we will use several times the following Bishop-Gromov monotonicity (see \cite{Sturm06I,Sturm06II}): for all $x\in\X$ we have that\[
r\mapsto \frac{\mm(B_r(x))}{\omega_Nr^N},\qquad\text{is non-increasing}.
\]
In particular, the following limit is well-defined and independent on $x$
\[
{\sf AVR}(\X) \coloneqq \lim_{r\to \infty} \frac{\mm(B_r(x))}{\omega_Nr^N} \in [0,\infty).
\]
We next state a useful principle for Sobolev functions and functions of bounded variations.
\begin{proposition}\label{prop:sob small support}
    For all constants $\eps>0,K\in\R,N\in(1,\infty),p\in[1,N)$ and $R_0>0$ there exists $\delta \coloneqq \delta(\eps,K,N,p,R_0)>0$ so that the following holds: let $\Xdm$ be an ${\sf RCD}(K,N)$ space, $x \in \X$ and suppose that $u \in W^{1,p}(\X)$ if $p\neq 1$ or $u \in BV(\X)$ if $p=1$ satisfies for some $0<R\le R_0$
    \begin{equation}
    \supp(u)\subset B_R(x),\qquad \frac{\mm(\supp(u))}{\mm(B_R(x))} \le \delta.
    \label{eq:delta support}
    \end{equation}
    Then, it holds
    \[
        \int |u|^p\,\d \mm \le  \eps\cdot \begin{cases}
        \, \int |\nabla u|^p\,\d \mm,&\text{if }p\in(1,N),\\
        \, |Du|(\X),            &\text{if }p=1.
        \end{cases}
    \]
\end{proposition}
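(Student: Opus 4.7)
The plan is to combine H\"older's inequality with a local Sobolev inequality on $\RCD(K,N)$ spaces, exploiting the smallness of the support only through the relative volume ratio.

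First, since $p \in [1,N)$, set $p^* = pN/(N-p)$ (with $1^*=N/(N-1)$) and note the arithmetic identity $1-p/p^* = p/N$. Applying H\"older on the set $\supp(u)$ gives
\[
\int |u|^p\,\d\mm \;\le\; \mm(\supp u)^{p/N}\left(\int |u|^{p^*}\,\d\mm\right)^{p/p^*}.
\]
So the task reduces to bounding $\|u\|_{L^{p^*}(\mm)}$ by the energy, with explicit dependence on $\mm(B_R(x))$.

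The second step is to invoke a local Sobolev inequality. On any $\RCD(K,N)$ space with $N<\infty$, Bishop--Gromov yields a doubling constant depending only on $K,N,R_0$ at scales up to $R_0$, and the $(1,p)$-Poincar\'e inequality at the same scales holds with constants depending only on $K,N,R_0$ (this goes back, via the Lott--Sturm--Villani framework, to arguments of Rajala; see the discussion in the preliminaries of \cite{NobiliViolo24_PZ}). By the Haj\l{}asz--Koskela machinery (or directly by a truncation/telescoping argument on balls), for any $u$ supported in $B_R(x)$ with $R\le R_0$ one then obtains
\[
\left(\int |u|^{p^*}\,\d\mm\right)^{1/p^*} \;\le\; C(K,N,R_0)\, R\, \mm(B_R(x))^{-1/N}\,\|\nabla u\|_{L^p(\mm)},
\]
and the analogous statement for $p=1$ with $\|\nabla u\|_{L^1(\mm)}$ replaced by $|Du|(\X)$. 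The crucial feature is the factor $\mm(B_R(x))^{-1/N}$, which arises from the self-improvement from an averaged to a scaled estimate, and which will precisely cancel the relative volume factor from Step~1.

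Combining the two displays and using $R \le R_0$ together with the hypothesis $\mm(\supp u) \le \delta\, \mm(B_R(x))$,
\[
\int |u|^p\,\d\mm \;\le\; C(K,N,R_0)^p\, R_0^p\, \left(\frac{\mm(\supp u)}{\mm(B_R(x))}\right)^{p/N} \int |\nabla u|^p\,\d\mm \;\le\; C(K,N,R_0)^p R_0^p\, \delta^{p/N} \int |\nabla u|^p\,\d\mm,
\]
with $\int|\nabla u|^p\,\d\mm$ replaced by $|Du|(\X)$ when $p=1$. Choosing
\[
\delta \;\coloneqq\; \bigl(\eps\, C(K,N,R_0)^{-p}\, R_0^{-p}\bigr)^{N/p}
\]
yields the conclusion. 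The main technical point is ensuring that the constant in the local Sobolev embedding depends only on $K,N,R_0$ and carries exactly the factor $\mm(B_R(x))^{-1/N}$; everything else is H\"older and bookkeeping. No contradiction/compactness argument is needed, so the estimate is quantitative.
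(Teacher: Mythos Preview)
Your approach is correct and in fact more direct than the paper's. The paper first interpolates $\|u\|_{L^p}$ between $\|u\|_{L^1}$ and $\|u\|_{L^{p^*}}$ via Young's inequality, uses the small-support hypothesis and H\"older to absorb the $L^1$ term back into the $L^p$ norm, and only then invokes the local Sobolev--Poincar\'e inequality (with the mean subtracted) to reach the gradient. Your route---H\"older on $\supp(u)$ followed by a single local Sobolev estimate---is shorter and makes the dependence on $\delta$ (namely $\delta^{p/N}$) completely transparent.

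One imprecision to correct: the inequality you state in Step~2,
\[
\Big(\int |u|^{p^*}\,\d\mm\Big)^{1/p^*} \le C(K,N,R_0)\, R\, \mm(B_R(x))^{-1/N}\,\|\nabla u\|_{L^p(\mm)},
\]
is \emph{not} valid for \emph{arbitrary} $u$ supported in $B_R(x)$: if $\X$ is compact with $\X=B_R(x)$ and $u\equiv 1$, the right-hand side vanishes. What Haj\l asz--Koskela gives is the Sobolev--Poincar\'e inequality with the mean $u_{B_R}\coloneqq\fint_{B_R(x)}u\,\d\mm$ subtracted; dropping the mean requires that $u$ vanish on a definite fraction of the ball. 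This is of course exactly what you have: by \eqref{eq:delta support}, $\mm(\{u=0\}\cap B_R(x))\ge (1-\delta)\,\mm(B_R(x))$, so once $\delta\le 1/2$ one gets $|u_{B_R}|\le (1-\delta)^{-1}\fint_{B_R(x)}|u-u_{B_R}|\,\d\mm$ and your displayed inequality follows with a constant depending only on $K,N,R_0$. With this small adjustment the argument is complete.
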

\begin{proof}
    We only prove the case $p>1$, the case $p=1$ being the same. Denote by $p^*=pN/(N-p)>p$ and, for all $\eps>0$, notice by interpolation and Young inequality that
    \[
    \left(\fint_{B_R(x)}|u|^p\,\d\mm\right)^\frac1p \le \eps\left( \fint_{B_R(x)}|u|^{p^*}\,\d\mm\right)^{\frac{1}{p^*}} + \eps^{-\frac{N(p-1)}{p}}\fint_{B_r(x)}|u|\,\d\mm. 
    \]
    Thus, provided $\delta^{1-1/p} \le \frac 12 \eps^{\frac{N(p-1)}{p}}$, by H\"older inequality and the assumptions \eqref{eq:delta support} we get
    \[
     \left(\fint_{B_R(x)}|u|^p\,\d\mm\right)^\frac1p  \le 2\eps \left( \fint_{B_R(x)}|u|^{p^*}\,\d\mm\right)^{\frac{1}{p^*}}.
    \]
    In particular, by the triangular inequality and the local $(p^*,p)$-Sobolev inequality in this setting (see, e.g.\ \cite[Theorem 5.1]{HajlaszKoskela00}), we deduce
    \[
    \left( \fint_{B_R(x)}|u|^{p^*}\,\d\mm\right)^{\frac{1}{p^*}} - \fint_{B_R(x)}|u|\,\d\mm \le  \left( \fint_{B_R(x)}\Big|u - \fint_{B_R(x)}u\,\d \mm\Big|^{p^*} \,\d\mm\right)^{\frac{1}{p^*}} \le C \left(\fint_{B_R(x)}|\nabla u|^p\,\d \mm\right)^\frac 1p,
    \]
    for some constant $C\coloneqq C(K,N,R_0)>0$. Combining everything and using again H\"older inequality on the term $\fint_{B_R(x)}|u|\,\d\mm $, the proof is concluded.
\end{proof}

We isolate here the following technical density bound that will be needed in the proof of Theorem \ref{thm:CC extremal} in the collapsed case. The proof is identical to \cite[Lemma 6.1]{NobiliViolo24}, there for $p=2$, and it is omitted.
\begin{lemma}[Density bound from reverse Sobolev inequality]\label{lem:density bound}
For every $N\in(1,\infty),p\in(1,N)$ and $K \in \R$ there are constants $\lambda_{N,{K},p}\in(0,1),\, r_{{K^-},N,p}>0$  (with $r_{0,N,p}=+\infty$), $C_{N,K,p}>0$  such that the following holds. Set $p^*=pN/(N-p)$ and let $\Xdm$ be an $\RCD(K,N)$ space and let $u \in W^{1,p}_{loc}(\X)\cap L^{p^*}(\mm)$ be non-constant satisfying 
    \begin{equation}\label{eq:reverse sobolev lemma}
           \|u\|_{L^{p^*}(\mm)}^p\ge A\|\nabla u\|_{L^p(\mm)}^p,
    \end{equation}
    for some $A>0.$ Assume also that for some $\eta \in(0,\lambda_{N,K,p})$, $\rho \in (0,r_{{K^-},N,p}\wedge \frac{\lambda_{N,K,p}}8 \diam(\X))$ and $x\in\X$ it holds
    $$\|u\|^{p^*}_{L^{p^*}(B_{\rho}(x))}\ge (1-\eta) \|u\|_{L^{p^*}(\mm)}^{p^*}.$$
    Then
        \begin{equation}\label{eq:density bound}
        \frac{\mm(B_\rho(x))}{\rho ^N}\le \frac{C_{N,K,p}}{A^{N/p}}.
    \end{equation}
\end{lemma}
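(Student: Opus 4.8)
The plan is to localise $u$ by a cutoff, feed the result into the Sobolev inequality for compactly supported functions, and read off the volume density from the constant in that inequality. Concretely, I would first fix the threshold $r_{K^-,N,p}>0$ (equal to $+\infty$ when $K\ge 0$, finite when $K<0$) small enough that on every $\RCD(K,N)$ space the balls of radius $\le 4 r_{K^-,N,p}$ are uniformly doubling and support a weak $(1,p)$-Poincar\'e inequality with constants depending only on $N,K,p$, and that Bishop--Gromov gives $\mm(B_s(y))/\mm(B_t(y))\ge c_{N,K,p}(s/t)^N$ for $0<s\le t\le 4 r_{K^-,N,p}$. By the compactly supported Sobolev inequality in this framework (\cite[Theorem 5.1]{HajlaszKoskela00}), there is $\bar C=\bar C(N,K,p)>0$ such that for every $y\in\X$, every $r\le 4 r_{K^-,N,p}$ with $B_r(y)\neq\X$, and every $w\in W^{1,p}(\X)$ supported in $B_r(y)$,
\[
\|w\|_{L^{p^*}(\mm)}\le \bar C\,r\,\mm(B_r(y))^{-\frac1N}\,\|\nabla w\|_{L^p(\mm)}=\bar C\Big(\tfrac{r^N}{\mm(B_r(y))}\Big)^{\frac1N}\|\nabla w\|_{L^p(\mm)},
\]
where I used $\tfrac1{p^*}-\tfrac1p=-\tfrac1N$ to pass from averaged to total integrals.

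Next, with $\rho,x$ as in the statement, I would pick $\chi\in\Lip_{bs}(\X)$ with $0\le\chi\le1$, $\chi\equiv1$ on $B_\rho(x)$, $\chi\equiv0$ on $\X\setminus B_{2\rho}(x)$ and $\lip(\chi)\le1/\rho$, and set $v\coloneqq\chi u$, so that $v\in W^{1,p}(\X)$ (using $u\in W^{1,p}_{loc}(\X)$), $\supp(v)\subseteq B_{2\rho}(x)$, and $|\nabla v|\le|\nabla u|+\tfrac1\rho|u|\,\mathbf{1}_{B_{2\rho}(x)\setminus B_\rho(x)}$ $\mm$-a.e. Note that \eqref{eq:reverse sobolev lemma} together with $u\in L^{p^*}(\mm)$ forces $\|\nabla u\|_{L^p(\mm)}<\infty$, and that $u$ non-constant forces $\|u\|_{L^{p^*}(\mm)}>0$. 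The concentration hypothesis gives $\|v\|_{L^{p^*}(\mm)}\ge\|u\|_{L^{p^*}(B_\rho(x))}\ge(1-\eta)^{1/p^*}\|u\|_{L^{p^*}(\mm)}$, while Minkowski's inequality, H\"older's inequality on $B_{2\rho}(x)\setminus B_\rho(x)$ (of measure $\le\mm(B_{2\rho}(x))$) and again the concentration hypothesis give
\[
\|\nabla v\|_{L^p(\mm)}\le\|\nabla u\|_{L^p(\mm)}+\tfrac1\rho\,\mm(B_{2\rho}(x))^{1/N}\eta^{1/p^*}\|u\|_{L^{p^*}(\mm)}.
\]
Applying the local Sobolev inequality above to $w=v$ with $r=2\rho$ (legitimate since $B_{2\rho}(x)\neq\X$ by $\rho\le\tfrac{\lambda_{N,K,p}}{8}\diam(\X)$), the powers of $\rho$ and of $\mm(B_{2\rho}(x))$ cancel in the error term; then using \eqref{eq:reverse sobolev lemma} as $\|\nabla u\|_{L^p(\mm)}\le A^{-1/p}\|u\|_{L^{p^*}(\mm)}$ and dividing by $\|u\|_{L^{p^*}(\mm)}>0$ yields
\[
(1-\eta)^{1/p^*}-2\bar C\,\eta^{1/p^*}\le\frac{2\bar C\,\rho}{\mm(B_{2\rho}(x))^{1/N}A^{1/p}}.
\]
Choosing $\lambda_{N,K,p}\in(0,1)$ small enough (in terms of $\bar C$, hence of $N,K,p$ only) that the left-hand side is $\ge\tfrac12$ for all $\eta<\lambda_{N,K,p}$, I obtain $\mm(B_{2\rho}(x))^{1/N}\le 4\bar C\,\rho\,A^{-1/p}$, hence $\mm(B_\rho(x))\le\mm(B_{2\rho}(x))\le(4\bar C)^N\rho^N A^{-N/p}$, which is \eqref{eq:density bound} with $C_{N,K,p}=(4\bar C)^N$.

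The only substantial point is the first display: the Sobolev inequality for compactly supported functions with the \emph{sharp} exponent $p^*=pN/(N-p)$ and a constant depending only on $N,K,p$. This is where the structure of $\RCD(K,N)$ spaces enters decisively --- uniform local doubling, a local $(1,p)$-Poincar\'e inequality, and the Bishop--Gromov lower volume comparison $\mm(B_s(y))\gtrsim(s/t)^N\mm(B_t(y))$, which pins the Sobolev dimension to $N$ --- combined with the Hajlasz--Koskela theory; everything else is elementary bookkeeping. The mild care required is only in fixing $r_{K^-,N,p}$ and $\lambda_{N,K,p}$ so that simultaneously the doubling and Poincar\'e constants are uniform, the ball $B_{2\rho}(x)$ stays a proper subset of $\X$, and the $\eta$-error term is absorbed. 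Equivalently, one may first replace $(\X,\sfd,\mm)$ by the $\RCD(K\rho^2,N)$ space $(\X,\sfd/\rho,\mm/\rho^N)$ --- admissible precisely because $\rho\le r_{K^-,N,p}$ --- which turns the statement into an upper bound for $\mm(B_1(x))$ at unit scale and makes all the curvature- and radius-dependent constants manifestly uniform.
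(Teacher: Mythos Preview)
Your proposal is correct and follows precisely the natural strategy that the paper has in mind: the paper omits the proof here, stating it is ``identical to \cite[Lemma 6.1]{NobiliViolo24}, there for $p=2$'', and your argument---localise by a Lipschitz cutoff on $B_{2\rho}(x)$, apply the local $(p^*,p)$-Sobolev inequality for compactly supported functions (the same \cite[Theorem 5.1]{HajlaszKoskela00} invoked elsewhere in the paper), control the gradient error via H\"older and the concentration hypothesis, and close with the reverse Sobolev bound---is exactly that argument transcribed to general $p$. The bookkeeping with $\lambda_{N,K,p}$ and $r_{K^-,N,p}$ (to keep the Poincar\'e/doubling constants uniform and ensure $B_{2\rho}(x)$ is a proper subset so the zero-boundary Sobolev inequality applies with a uniform constant) is handled correctly.
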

\subsection{Convergence and stability properties}
In this part, we recall compactness and stability properties of the ${\sf RCD}$-class and discuss notions of convergence of functions on varying base space.

 We recall first the notion of pointed measured Gromov Hausdorff convergence of metric measure spaces. This concept goes back to Gromov \cite{Gromov07}, while the following definition is not standard and is taken from  \cite{GigliMondinoSavare13}. However, in the case of finite dimensional ${\sf RCD}$-spaces, this notion coincides with previous ones considered in the literature (see again \cite{GigliMondinoSavare13}). 
 
 Set $\bar \N \coloneqq \N \cup\{\infty\}$. A \emph{pointed} metric measure space is a quadruple $(\X,\sfd,\mm,x)$ where $\Xdm$ is a metric measure space and {$x \in \supp(\mm)$}.
\begin{definition}
Let $(\X_n,\sfd_n,\mm_n,x_n)$ be pointed metric measure spaces for $n \in \bar \N$. We say that $(\X_n,\sfd_n,\mm_n,x_n)$ converges to $(\X_\infty,\sfd_\infty,\mm_\infty,x_\infty)$ in the pointed measured Gromov Hausdorff topology, provided there are a metric space $(\Z,\sfd)$ and isometric embeddings $\iota_n \colon \X_n \to \Z$ for all $n \in \bar \N$ satisfying
\[
    (\iota_n)_\sharp \mm_n \weakto (\iota_\infty)_\sharp\mm_\infty,\qquad \text{in duality with }C_{bs}(\Z),
\]
and $\iota_n(x_n)\to \iota_\infty(x_\infty)$ as $n\uparrow\infty$. The metric space $(\Z,\sfd)$ is called the \emph{realization of the convergence}. In this case, we shortly say that $\X_n$ pmGH-converges to $\X_\infty$ and write $\X_n \overset{pmGH}{\to}\X_\infty$. 
\end{definition}

The key results are then the pre-compactness and the stability properties of the ${\sf RCD}$-condition, referring to \cite{Gigli10,AmbrosioGigliSavare11-2,GigliMondinoSavare13} (also recall \cite{Sturm06I,Sturm06II,LV09}) and thanks to Gromov's precompactness \cite{Gromov07}). 
\begin{theorem}\label{thm:gromov}
    Let $(\X_n,\sfd_n,\mm_n,x_n)$ be pointed ${\sf RCD}(K_n,N_n)$ spaces for $n\in\N$ and for some $K_n \in \R,N_n \in [1,\infty)$ with $K_n \to K \in \R, N_n \to N \in [1,\infty)$. Suppose that $\mm_n(B_1(x_n)) \in (v,v^{-1})$ for some $v>0$ independent on $n$. Then, there exist a pointed ${\sf RCD}(K,N)$ space $(\X_\infty,\sfd_\infty,\mm_\infty,x_\infty)$ and a subsequence $(n_k)$ such that $\X_{n_k} \overset{pmGH}{\to}\X_\infty$ as $k\uparrow\infty$.
\end{theorem}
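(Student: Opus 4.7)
The plan is to combine Gromov's pre-compactness for pointed metric spaces with weak compactness of the reference measures on a common realization, and then invoke stability of the synthetic curvature--dimension condition as a black box.

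First, I would upgrade the one-radius volume bound $\mm_n(B_1(x_n))\in(v,v^{-1})$ to uniform two-sided volume bounds on balls of every fixed radius $R>0$ centered at $x_n$. This is a direct consequence of Bishop--Gromov monotonicity, which holds on each $\RCD(K_n,N_n)$ space, together with the convergence $K_n\to K$ and $N_n\to N$: the monotone ratio gives a uniform upper bound on $\mm_n(B_R(x_n))$ in terms of $\mm_n(B_1(x_n))$ for $R>1$ and a uniform lower bound in terms of $\mm_n(B_1(x_n))$ for $R<1$. In particular, this also yields a uniform doubling constant on any bounded scale.

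Second, uniform doubling forces the pointed metric spaces $(\X_n,\sfd_n,x_n)$ to be totally bounded at every scale around $x_n$, so by Gromov's classical pre-compactness theorem we may extract a subsequence converging in the pointed Gromov--Hausdorff topology. Concretely, one realizes the convergence inside a common proper metric space $(\Z,\sfd)$ through isometric embeddings $\iota_n\colon \X_n\to \Z$ with $\iota_n(x_n)\to x_\infty$, so that $\iota_n(\X_n)$ converges in the pointed Hausdorff sense to a closed set $\X_\infty\subseteq \Z$. Restricting the ambient metric to $\X_\infty$ produces the candidate limit metric space $(\X_\infty,\sfd_\infty)$.

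Third, because the pushforwards $\mu_n\coloneqq (\iota_n)_\sharp\mm_n$ are uniformly bounded on every bounded subset of $\Z$ by the first step, a diagonal application of Prokhorov's theorem on an exhaustion by balls extracts a further subsequence with $\mu_n\weakto \mu_\infty$ in duality with $C_{bs}(\Z)$. The support of $\mu_\infty$ is contained in $\X_\infty$, which we identify with the image of an isometric embedding $\iota_\infty$, and the uniform lower bound $\mm_n(B_1(x_n))\ge v$ together with an $\eps$-fattening/cutoff argument ensures that $\mu_\infty$ is non-zero and boundedly finite. Setting $\mm_\infty\coloneqq (\iota_\infty^{-1})_\sharp\mu_\infty$ realizes the pmGH convergence $\X_{n_k}\overset{pmGH}{\to}\X_\infty$.

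Finally, stability of the $\RCD(K_n,N_n)$ condition under pmGH convergence, in the form proved in \cite{GigliMondinoSavare13} (building on \cite{Gigli10,AmbrosioGigliSavare11-2,Sturm06I,Sturm06II,LV09}), transfers the synthetic bound to the limit and yields that $(\X_\infty,\sfd_\infty,\mm_\infty,x_\infty)$ is an $\RCD(K,N)$ space. The main subtlety is coordinating the separate extractions—GH convergence of the metric structures, weak convergence of the measures, and verification of a nontrivial limit—which is handled by working throughout inside the common realization $(\Z,\sfd)$; the verification of the synthetic curvature bound on the limit is then delegated to the cited stability theorem.
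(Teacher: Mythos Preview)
Your sketch is correct and follows the standard route; note, however, that the paper does not actually prove this theorem but simply states it as a known result, citing \cite{Gigli10,AmbrosioGigliSavare11-2,GigliMondinoSavare13} for stability and \cite{Gromov07} for precompactness. Your outline---Bishop--Gromov to propagate the volume bound to all radii, Gromov precompactness for the metric structure in a common realization, Prokhorov for the measures, and then the black-box stability of the $\RCD$ condition---is precisely the argument carried out in those references, so there is nothing to compare.
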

It is well known that, as a by-product of the above result, we have the existence of blow downs (or asymptotic cones) of a pointed ${\sf RCD}(0,N)$ metric measure space $(\X,\sfd,\mm,x)$ with ${\sf AVR}(\X)>0$. A blowdown is any pointed metric measure space $(Y,\rho,\mu,y)$  arising as a pmGH-limit of $(\X,\sigma\cdot\sfd, \sigma^N\mm,x)$ along a suitable subsequence $\sigma_n\down 0$, possibly depending on $x\in \X$.

Next, we recall some notions of convergence of functions along a pmGH-converging sequence, following \cite{Honda15,GigliMondinoSavare13,AmbrosioHonda17}and adopting the so-called \emph{extrinsic approach}, see \cite{GigliMondinoSavare13}.
\begin{definition}\label{def:lpconv}
 Let $(\X_n,\sfd_n,\mm_n,x_n)$ be pointed metric measure spaces for $n \in \bar \N$ and suppose that $\X_n \overset{pmGH}{\to}\X_\infty$ as $n\uparrow\infty$. Let $p \in (1,\infty)$ and fix a realization  of the convergence in $(\Z,\sfd)$. We say:
\begin{itemize}
\item[{\rm i)}] $f_n\in L^p(\mm_n)$ \emph{converges $L^p$-weak} to $f_\infty\in L^p(\mm_\infty)$, provided $\sup_{n \in \N}\|f_n\|_{L^p(\mm_n)}<\infty$ and $f_n\mm_n \weakto f_\infty\mm_\infty$ in duality with $C_{bs}(\Z)$;
\item[{\rm ii)}] $f_n\in L^p(\mm_n)$ \emph{converges $L^p$-strong} to $f_\infty\in L^p(\mm_\infty)$, provided it converges $L^p$-weak and $\limsup_n \|f_n\|_{L^p(\mm_n)} \le  \|f_\infty\|_{L^p(\mm_\infty)}$;
\item[{\rm iii)}] $f_n \in W^{1,p}(\X_n)$ \emph{converges $W^{1,p}$-weak} to $f_\infty \in{W^{1,p}(\X_\infty)}$ provided it converges $L^p$-weak and $\sup_{n \in \N} \| \nabla f_n \|_{L^p(\mm_n)}<\infty$;
\item[{\rm iv)}] $f_n \in W^{1,p}(\X_n)$ \emph{converges $W^{1,p}$-strong} to $f_\infty \in W^{1,p}(\X_\infty)$ provided it converges $L^p$-strong and $\| \nabla f_n \|_{L^p(\mm_n)} \to \|\nabla f_\infty \|_{L^p(\mm_\infty)}$;
\item[{\rm v)}] $f_n\in L^p_{loc}(\mm_n)$ \emph{converges $L^p_{loc}$-strong} to $f_\infty\in L^p_{loc}(\mm_\infty)$, provided $\eta f_n$ converges $L^p$-strong to $\eta f_\infty$ for every $\eta \in C_{bs}(\Z)$.
\item[{\rm vi)}] {$f_n \in L^1(\mm_n)$ \emph{converges $L^1$-strong} to $f_\infty \in L^1(\mm_\infty)$, provided $\sigma \circ f_n \mm_n\weakto \sigma \circ f_\infty \mm_\infty$ in duality with $C_{bs}(\Z)$ and $\lim_{n\to\infty} \|f_n\|_{L^1(\mm_n)} = \|f_\infty \|_{L^1(\mm_\infty)}$, where $\sigma(z)=\text{sgn}(z)\sqrt{z}$.}
\end{itemize}
\end{definition}
We point out (see \cite{Honda15,GigliMondinoSavare13,AmbrosioHonda17}) that the strong notions of convergence are also metrizable. 
\subsection{Alexandrov spaces: asymptotic geometry}
We recall some useful results around ${\sf CBB}(0)$ spaces, namely metric spaces $(\X,\sfd)$ with nonnegative sectional curvature lower bounds in the sense of Alexandrov. We refer to \cite{BBI01,AKP} for detailed discussions and references and to the foundational works \cite{Alexandrov41} and \cite{BuGroPer92} of the Alexandrov geometry.

We first recall the concept of triangle comparison. Given a {complete} geodesic metric space $(\X,\sfd)$, and three points $a,b,c\in\X$, then we consider three points in $\R^2$ (unique up to isometries) called \emph{comparison points}, $\bar a,\bar b,\bar c\in\R^2$ such that
\[
|\bar a - \bar b|=\sfd(a,b),\qquad\qquad |\bar b- \bar c|=\sfd(b,c),\qquad\qquad|\bar c-\bar a|=\sfd(c,a).
\]
A point $d\in\X$ is said to be intermediate between $b,c\in\X$ provided $\sfd(b,d)+\sfd(d,c)=\sfd(b,c)$ (this means that $d$ lies on a geodesic joining $b$ and $c$). The \emph{comparison point of $d$} is the unique (once $\bar a,\bar b,\bar c$ are fixed) point $\bar d\in\R^2$, such that $|\bar d-\bar b|=\sfd(d,b),$ and $|\bar d-\bar c|=\sfd(d,c)$.
\begin{definition}[${\sf CBB}(0)$ space]
    A metric space $(\X,\sfd)$ is a ${\sf CBB}(0)$-space, provided for every triple of points $a,b,c \in \X$ and for every intermediate point $d\in \X$ between $b,c$, it holds
    \[
        \sfd(d,a) \ge |\bar d - \bar a|.
    \]
\end{definition}
Several equivalent definitions, in terms of comparison angles or properties of the distance function along geodesics, can be given. We refer to \cite{BBI01} for a complete account and reference. 

It is well known that Alexandrov spaces have \emph{integer dimension} and have a well-behaved local and asymptotic geometry, see \cite{BBI01,AKP}. Given $N\in\N$, we say that $(\X,\sfd)$ is an $N$-dimensional ${\sf CBB}(0)$-space, provided it has Hausdorff dimension $N$. We denote by $\HH^N$ the Hausdorff measure in this case, built-in on top of the metric $\sfd$ with the usual construction. Related to this, we recall the compatibility result
\[
(\X,\sfd,\HH^N)\qquad \text{is an }{\sf RCD}(0,N)\text{-space},
\]
as outcome of \cite{Petrunin11,ZhangZhu10,GKKO} (in fact, it is also non-collapsed \cite{DePhilippisGigli15}). In the next result we report on the asymptotic geometry of Alexandrov spaces that turns out to be much better behaved as compared to that of ${\sf RCD}$ spaces.
\begin{theorem}\label{thm:asymptotic cone CBB}
    Let $(\X,\sfd)$ be a $N$-dimensional ${\sf CBB}(0)$-space for some $N\in\N$ with ${\sf AVR}(\X)>0$. Then, there is a unique blow down $(Y,\rho,\HH^N)$.  Moreover, $(\X,\sfd,\HH^N)$ splits isometrically a line if and only if $(Y,\rho,\HH^N)$ does.
\end{theorem}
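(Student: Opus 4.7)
The plan is to show (i) existence of a blow-down with a metric cone structure, (ii) its uniqueness, and (iii) the splitting equivalence, in this order. Fix a basepoint $o\in\X$.

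\emph{Existence and cone structure.} Thanks to ${\sf AVR}(\X)>0$ and Bishop-Gromov monotonicity, for any scales $R_n\to\infty$ the rescaled spaces $(\X,\sfd/R_n,\HH^N/R_n^N,o)$ have uniformly controlled volumes on unit balls at $o$, so by Theorem \ref{thm:gromov} they pmGH-subconverge to some ${\sf RCD}(0,N)$ space $(Y,\rho,\mu,o_\infty)$. Non-collapse (inherited from ${\sf AVR}(\X)>0$) combined with the non-collapsed volume convergence theorem \cite{DePhilippisGigli15} identifies $\mu=\HH^N$ and $\dim_{\HH}Y=N$. The monotone ratio $r\mapsto\HH^N(B_r(o))/\omega_N r^N$ decreases to ${\sf AVR}(\X)$, and rescaling makes the same ratio on $Y$ \emph{constant} in $r$. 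Bishop-Gromov rigidity then forces $Y$ to be a metric cone with apex $o_\infty$, and the stability of the ${\sf CBB}(0)$ condition yields $(Y,\rho)\in{\sf CBB}(0)$.

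\emph{Uniqueness.} The next step is to show that any two blow-downs $Y_1,Y_2$ arising from different scale sequences are isometric. Both are metric cones with identical volume ratio ${\sf AVR}(\X)$, hence are determined by their cross-section (link). I would exploit Toponogov's comparison in ${\sf CBB}(0)$: for any two rays $\gamma_1,\gamma_2$ issuing from $o\in\X$ the comparison angle $\tilde\angle(\gamma_1(s),o,\gamma_2(t))$ is monotone non-increasing in $s,t$ and admits a limit $\alpha(\gamma_1,\gamma_2)$ as $s,t\to\infty$, independent of any subsequence. This intrinsic pseudo-distance, modulo the equivalence relation of having zero asymptotic angle, defines the link of any blow-down (cf.\ \cite{BuGroPer92,AKP}) and forces $Y_1\cong Y_2$.

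\emph{Splitting equivalence.} The forward direction is immediate: if $\X=\R\times\X'$ as metric measure spaces, each rescaling factorizes, and by uniqueness the blow-down equals $\R\times Y'$ with $Y'$ the blow-down of $\X'$. For the converse, assume $Y=\R\times Y'$. Since $Y$ is a cone, it contains a line through the apex $o_\infty$; pick its endpoints $y_\pm$ at $\rho$-distance $1$ from $o_\infty$, so $\rho(y_+,y_-)=2$. Using the pmGH convergence $(\X,\sfd/R_n,o)\to(Y,\rho,o_\infty)$, approximate $y_\pm$ by $p_n,q_n\in\X$ so that $\sfd(o,p_n)\approx R_n\approx\sfd(o,q_n)$ and $\sfd(p_n,q_n)\approx 2R_n$. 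The resulting near-saturation of the triangle inequality forces $o$ to lie within $o(R_n)$ of any geodesic $\gamma_n$ from $p_n$ to $q_n$; after arc-length reparametrization of $\gamma_n$ centred at its nearest point to $o$, Arzelà-Ascoli on compact subintervals and a diagonal extraction produce a line $\gamma\colon\R\to\X$ through $o$. The Cheeger-Gromoll splitting theorem, valid on ${\sf CBB}(0)$ and more generally on ${\sf RCD}(0,N)$ spaces, then yields $\X=\R\times\X'$.

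The main obstacle will be the uniqueness of the blow-down: this step is where the Alexandrov (sectional curvature) hypothesis is genuinely used, since for general ${\sf RCD}(0,N)$ spaces blow-downs along different scales need not coincide. Once uniqueness is in hand, the splitting equivalence reduces to a standard line-extraction argument combined with Cheeger-Gromoll splitting.
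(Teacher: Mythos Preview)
The paper does not prove this theorem; it simply refers to \cite[Theorem~4.6]{AntonelliBrueFogagnoloPozzetta22} for the manifold case and to \cite[Theorem~2.11]{AntonelliPozzetta23} for the Alexandrov setting. Your outline follows the standard route in Alexandrov geometry and is essentially correct, but the line-extraction step in the converse splitting direction has a gap.

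You assert that $o$ lies within $o(R_n)$ of the geodesic $\gamma_n$ from $p_n$ to $q_n$, recentre at the nearest point $m_n$, and invoke Arzel\`a--Ascoli. The problem is that $o(R_n)$ need not be bounded: from the pmGH-convergence you only obtain $\sfd(o,m_n)/R_n\to 0$, so $m_n$ may drift to infinity and $\gamma_n\big|_{[-L,L]}$ is not trapped in any fixed compact subset of $\X$. Arzel\`a--Ascoli then fails to produce a line through a fixed basepoint. The clean fix reuses precisely the Toponogov monotonicity from your uniqueness step: instead of $\gamma_n$, take geodesics $\sigma_n^\pm$ from $o$ to $p_n$ and to $q_n$ respectively, and extract subsequential limit rays $\sigma^\pm$ emanating from $o$. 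Since $\tilde\angle(p_n,o,q_n)\to\pi$ and the comparison angle at a hinge in ${\sf CBB}(0)$ is non-increasing along its sides, one has $\tilde\angle\big(\sigma_n^+(s),o,\sigma_n^-(t)\big)\ge\tilde\angle(p_n,o,q_n)$ for every fixed $s,t>0$; letting $n\to\infty$ yields $\tilde\angle\big(\sigma^+(s),o,\sigma^-(t)\big)=\pi$, i.e.\ $\sfd\big(\sigma^+(s),\sigma^-(t)\big)=s+t$ for all $s,t>0$, so the concatenation of $\sigma^+$ and $\sigma^-$ is a genuine line through $o$. The splitting theorem then concludes as you indicate.
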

A proof of the above can be found in \cite[Theorem 4.6]{AntonelliBrueFogagnoloPozzetta22} in the contexts of manifolds. We refer also to \cite[Theorem 2.11]{AntonelliPozzetta23} for the current setting and for further references.
\section{Convergence of functions on varying spaces: the case $p\neq 2$}\label{sec:Mosco}
The aim of this section is to develop technical convergence and compactness results around Sobolev functions on varying spaces for a general {integration} exponent $p \in (1,\infty)$. The case $p=2$ has been first analyzed in \cite{GigliMondinoSavare13} by studying the stability properties of heat flows and quantities related to the Entropy functional. Later, in \cite{AmbrosioHonda17} the case $p\neq 2$ has been faced by relying on self-improvement properties studied in \cite{Savare13}. Even though the analysis in \cite{AmbrosioHonda17} holds on possibly infinite dimensional spaces, some results there require a probability reference measure or the existence of a common isoperimetric profile along the sequence. 

In the setting of this note, we cannot assume finite reference measures as we are going to deal with sequences of noncompact ${\sf RCD}$-spaces with $\sigma$-finite reference measures. For finite dimensional spaces, we extend next the analysis of \cite{AmbrosioHonda17} completely dropping any further assumptions.

Throughout this section, we shall {consider fixed} a sequence $(\X_n,\sfd_n,\mm_n,x_n)$ of pointed $\RCD(K,N)$ spaces, $n \in \N\cup\{\infty\}$, for some $K\in\R, N \in (1,\infty)$ with $\X_n\overset{pmGH}{\to}\X_\infty$. A proper realization of this convergence $(\Z,\sfd)$ will also be fixed following the extrinsic approach \cite{GigliMondinoSavare13}.
\subsection{Mosco-convergence of Cheeger energies}\label{sec:compactness}
We study a {Rellich-type} compactness result allowing to extract $L^p$-strong converging subsequence from uniform $W^{1,p}$-bounds and equiboundedness of the supports.  Here we also cover the BV-case for the sake of generality.
\begin{proposition}\label{prop:rellich W1p}
    Let $p\in [1,\infty)$ and suppose that $u_n \in L^p(\mm_n)$ satisfies $\sup_n\|u_n\|_{L^p(\mm_n)}<\infty$ and $\supp(u_n)\subset B_R(x_n)$ for some $R>0$ independent on $n \in\N$. Furthermore, assume:    \begin{itemize}
        \item if $p\in(1,\infty)$ it holds $u_n \in W^{1,p}(\X_n)$ and $\sup_n \|\nabla u_n\|_{L^p(\mm)}<\infty$;
        \item if $p=1$ it holds $u_n \in BV(\X_n)$ and $\sup_n |Du_n|(\X_n)<\infty$.
    \end{itemize}   
    Then, up to passing to a subsequence, it holds that $u_n$  converges $L^p$-strong to some $u_\infty\in L^p(\mm_\infty)$. 
\end{proposition}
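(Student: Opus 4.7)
The plan is to carry out two stages: extract a weak $L^p$-subsequential limit by measure-theoretic compactness, and then upgrade weak to strong convergence by a Poincar\'e-type covering argument that essentially exploits the finite-dimensional assumption $N<\infty$.

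For the weak extraction, the supports $\supp(u_n)\subset B_R(x_n)$ embed into a common relatively compact subset of the realization $Z$ via the isometric embeddings, and $\sup_n \mm_n(B_R(x_n))<\infty$ by Bishop--Gromov applied to the bound on $\mm_n(B_1(x_n))$ forced by pmGH-convergence. Hence H\"older's inequality yields that $u_n\mm_n$, regarded as signed measures on $Z$, have uniformly bounded total variation and uniformly compact support. I then apply Prokhorov compactness to extract a subsequence with $u_n\mm_n\weakto \nu$ in duality with $C_{bs}(Z)$ for some finite signed Borel $\nu$. Testing against non-negative $\phi\in C_{bs}(Z)$ via $\int \phi u_n\,\d\mm_n \le \|u_n\|_{L^p(\mm_n)}\|\phi\|_{L^{p'}(\mm_n)}$ and using the weak convergence $\mm_n\weakto\mm_\infty$ shows $|\nu|\ll\mm_\infty$ with Radon--Nikodym derivative $u_\infty\in L^p(\mm_\infty)$; this is $L^p$-weak convergence.

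For the strong upgrade, the target is $\limsup_n\|u_n\|_{L^p(\mm_n)}\le\|u_\infty\|_{L^p(\mm_\infty)}$, since the reverse inequality is lower semicontinuity of the $L^p$-norm with respect to this weak convergence. The key tool is the uniform local $(p,p)$-Poincar\'e inequality on $\RCD(K,N)$ spaces: there are constants $r_0,\lambda,C$ depending only on $K,N,R$ such that
\[
\int_{B_r(y)}\bigl|u_n - (u_n)_{B_r(y)}\bigr|^p\,\d\mm_n \le C r^p \int_{B_{\lambda r}(y)}|\nabla u_n|^p\,\d\mm_n,
\]
for $r\le r_0$ and $y\in \X_n$. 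Given $\eps>0$, I pick $r$ small and cover $\supp(u_n)$ by a Vitali-type family $\{B_r(y_i^n)\}_{i=1}^M$ with uniformly controlled cardinality $M=M(r,K,N,R)$ and bounded overlap (from doubling); summing Poincar\'e bounds the total oscillation by $Cr^p\sup_n\|\nabla u_n\|_{L^p}^p\le \eps$. It remains to analyse the mean-value sum $A_n(r)\coloneqq\sum_i |(u_n)_{B_r(y_i^n)}|^p\,\mm_n(B_r(y_i^n))$. By selecting the cover centres as images of limit points in $Z$, so that for all but countably many radii the balls have $\mm_\infty$-null boundary, the weak convergences $u_n\mm_n\weakto u_\infty\mm_\infty$ and $\mm_n\weakto\mm_\infty$ yield $A_n(r)\to A_\infty(r)$ as $n\to\infty$. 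Finally Lebesgue differentiation on the doubling space $(\X_\infty,\sfd_\infty,\mm_\infty)$, combined with the $L^p$-boundedness of the Hardy--Littlewood maximal function supplied by the $\RCD(K,N)$ condition, gives $A_\infty(r)\to \|u_\infty\|_{L^p(\mm_\infty)}^p$ as $r\to 0$.

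For the case $p=1$, the identical scheme applies using the $(1,1)$-Poincar\'e inequality for $BV$ functions on $\RCD(K,N)$ spaces, with $|Du_n|(B_{\lambda r}(y))$ replacing $\int|\nabla u_n|\,\d\mm_n$. The main obstacle I anticipate is the bookkeeping in the covering argument: the radii must be chosen compatibly with pmGH-convergence and with $\mm_\infty$-null boundaries (handled by Fubini, since such radii form a conull subset), and the oscillation estimate must be summed without incurring multiplicative losses from inequalities of the form $(a+b)^p\le 2^{p-1}(a^p+b^p)$; this is precisely why a Vitali-type disjointification is used in place of a partition of unity, and why the uniform doubling and Poincar\'e constants from finite dimensionality are used essentially.
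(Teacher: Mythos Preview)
Your approach is entirely different from the paper's, which reduces to the known $p=2$ case of Gigli--Mondino--Savar\'e via truncation (controlling the tail by Proposition~\ref{prop:sob small support}) and, for $p<2$, heat-flow regularization to manufacture equi-Lipschitz approximants lying in $W^{1,2}$; no Poincar\'e or covering argument appears.

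Your scheme has a genuine gap at the step you yourself flag. With a Vitali family of \emph{disjoint} balls $\{B_r(y_i)\}$, the quantity $A_n(r)=\sum_i |(u_n)_{B_r(y_i)}|^p\,\mm_n(B_r(y_i))$ is, by Jensen and disjointness, a \emph{lower} bound for $\|u_n\|_{L^p}^p$, so it cannot yield the needed upper bound $\limsup_n\|u_n\|_{L^p}^p\le A_\infty(r)+\eps$; moreover the disjoint balls miss a fixed positive fraction of the mass (governed by the doubling constant), so $A_\infty(r)\not\to\|u_\infty\|_{L^p}^p$ either. If instead the balls overlap so as to cover, you incur exactly the multiplicative loss you are trying to avoid. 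A workable repair is to use a Borel \emph{partition} $\{A_i\}$ with $A_i\subset B_r(y_i)$ (e.g.\ Voronoi cells for an $r$-net), set $\tilde A_n(r)=\sum_i |(u_n)_{B_r(y_i)}|^p\,\mm_n(A_i)$, and combine Poincar\'e with the elementary bound $\big||a|^p-|b|^p\big|\le p|a-b|\big(|a|^{p-1}+|b|^{p-1}\big)$ (rather than the convexity inequality) to obtain $\big|\|u_n\|_{L^p}^p-\tilde A_n(r)\big|\le Cr$ with \emph{no} multiplicative constant; then $\tilde A_\infty(r)=\|g_r\|_{L^p}^p\to\|u_\infty\|_{L^p}^p$ for the step function $g_r=\sum_i (u_\infty)_{B_i}\,\nchi_{A_i}$. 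A second gap sits at $p=1$: your Riesz step only yields a signed measure $\nu$, since $(L^\infty)^*\neq L^1$, and nothing in the argument rules out a singular part $\nu_s\perp\mm_\infty$ --- the maximal-function bound you invoke later is precisely what fails at $p=1$. One fix is to bootstrap via the $(1,1^*)$-Sobolev--Poincar\'e inequality for $BV$ functions supported in $B_R(x_n)$ to get $\sup_n\|u_n\|_{L^{1^*}(\mm_n)}<\infty$ with $1^*=N/(N-1)>1$, restoring reflexivity for the weak extraction.
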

\begin{proof}
    The case $p=2$ follows by \cite[Theorem 6.3]{GigliMondinoSavare13}. For the case $p\neq 2$, we adapt the argument in \cite[Theorem 7.5]{AmbrosioHonda17}. Our goal is for all $\eps>0$ to write $u_n= g_n^\eps + h^\eps_n$ in such a way that $g^\eps_n$ converge $L^p$-strong to some $g^\eps$ and $\|h_n^\eps\|_{L^p(\mm_n)}\le \eps$. This would be sufficient to conclude, since $\|g^\eps-g^{\eps'}\|_{L^p(\mm_\infty)} = \lim_{n\uparrow\infty}\|g_n^\eps-g^{\eps'}_n\|_{L^p(\mm_n)} \le \eps +\eps'$, hence by taking a sequence $\eps_i \downarrow 0$ fast enough, we have that $g^{\eps^i}$ is $L^p(\mm_\infty)$-Cauchy and it converges strongly in $L^p(\mm_\infty)$ to some $g$. Then, a diagonal argument would give that $g^{\eps_n}_n$ converges along a suitable subsequence in $L^p$-strong to $g$. Since $\|u_n - g^{\eps_n}_n\|_{L^p(\mm_n)} = \| h^{\eps_n}_n\|_{L^p(\mm_n)}\le \eps_n$ by construction, this implies that also $u_n$ converges $L^p$-strong to $g$. Thus, setting $u_\infty \coloneqq g$ gives the conclusion.

    To produce the above decomposition of $u_n$, we proceed as follows. By pmGH-convergence $\liminf_n \mm_n(B_R(x_n))\ge \mm_\infty(B_R(x_\infty))\eqqcolon v>0.$ By Markov inequality and the assumption on the uniform $L^p$-boundedness of $u_n$, we have that for all $\delta>0$ there exists $M\coloneqq M(\delta)>0$ independent of $n\in\N $ so that $\mm_n\{|u_n|>M\}\le \delta v/2$. Set $g_n \coloneqq g_n(\delta) \coloneqq (-M)\wedge u_n \vee M$ and $h_n\coloneqq h_n(\delta)\coloneqq u_n-g_n$. In particular, $\supp (h_n)\subset \{ |u_n|>M\}$ and so $\mm_n(\supp(h_n))\le \delta  \mm_n(B_R(x_n))$ for all $n$ big enough. Therefore, by applying Proposition \ref{prop:sob small support}, and thanks to the assumption $\sup_n\|u_n\|_{W^{1,p}(\X_n)}<\infty$ (resp. $\sup_n \|u_n\|_{L^1(\mm_n)}+|Du_n|(\X_n)<\infty)$, we obtain that $\|h_n\|_{L^p(\mm_n)}<\eps$, provided $\delta$ is chosen small enough. From here, we set $g_n^\eps \coloneqq g_n(\delta)$ and $h^\eps_n\coloneqq h_n(\delta)$.

    We now distinguish the case $p>2$ and $p<2$, the first being simpler. If $p>2$, then also $\sup_n\|g^\eps_n\|_{W^{1,2}(\X_n)}<\infty$ and so by \cite[Theorem 6.3]{GigliMondinoSavare13} we deduce that $g_n^\eps$ converges $L^2$-strong to some function $g^\eps$. Since the sequence is equi-bounded, then by \cite[(e) in Proposition 3.3]{AmbrosioHonda17} we also deduce that $g_n^\eps$ converge $L^p$-strong to $g^\eps$.

    It remains the case where $p<2$. In this case, for $t>0$ we consider instead the sequence $h_t^ng^\eps_n$, where $t\mapsto h^n_tf$ denote the heat flow evolution for the $2$-Cheeger energy on $\X_n$ starting at $f \in L^p(\mm_n)$, see e.g.\ \cite{GigliPasqualetto20book}. By the $L^\infty$-to-Lipschitz regularization property (see, e.g., \cite[Proposition 6.1.6]{GigliPasqualetto20book}) and the Sobolev-to-Lipschitz property \cite{Gigli13_splitting}, we deduce that $h_t^n g_n^\eps$ are equi-Lipschitz, since $g_n^\eps$ are equi-bounded (in $n\in\N$). Fix a cut-off $\eta \in \Lip_{bs}(\Z)$ with $\eta\equiv 1$ on $B_{R+1}(x_\infty)$ and $|\eta|\le 1$. Then, $\sup_n\| \eta h_t^ng_n^\eps\|_{W^{1,2}(\X_n)}<\infty$ and so up to subsequence we have that $\eta ( h_t^ng_n^\eps ) $ converges $L^2$-strong to some function $g^\eps$. Since $g^\eps_n$ are all supported, for $n$ large enough, in $B_{R+1}(x_\infty)$, we have
    \[
    \| g^\eps_n - \eta h_t^ng_n^\eps\|_{L^2(\mm_n)} = \|\eta g^\eps_n -\eta h_t^ng_n^\eps\|_{L^2(\mm_n)}\le \| g^\eps_n - 
    h_t^ng_n^\eps\|_{L^2(\mm_n)}.
    \]
    By stability properties of the heat flow (c.f. \cite[Theorem 6.3]{GigliMondinoSavare13}), the last term goes to zero as $t\to \infty$ uniformly on $n$. Hence, also $g^\eps_n$ converges $L^2$-strong to $g^\eps$, by metrizability of $L^2$-strong convergence with varying base space. Again, this upgrades to $L^p$-strong convergence being the supports equibounded.
\end{proof}
{ The above result for $p>1$ has recently appeared in the independent work \cite[Theorem 6.14]{Wu25}, under more general assumptions using a different method.}

We next derive a general weak lower semicontinuity result on open sets.
 \begin{proposition}[Lower semicontinuity on open sets]\label{prop:lsc on open}
   Let $p\in(1,\infty)$ and suppose that $u_n \in W^{1,p}(\mm_n)$ converges $L^p$-weak to some $u_\infty \in L^p(\mm_\infty)$ with $\sup_{n\in\N} \|u_n\|_{W^{1,p}(\X_n)}<\infty$. Then, $u_\infty \in W^{1,p}(\X_\infty)$ and for every $A\subset \Z$ open, we have
   \begin{equation}
        \int_A |\nabla u_\infty|^p\,\d \mm_\infty \le \liminf_{n\uparrow\infty}\int_A |\nabla u_n|^p\,\d \mm_n.
   \label{eq:lsc chp on open}
   \end{equation}
    Similarly, suppose that $u_n \in BV(\X_n)$ {converges} in $L^1$-weak to $u_\infty \in L^1(\mm_\infty)$ and $\sup_n |Du_n|(\X_n)<\infty$. Then, $u_\infty \in BV(\X_\infty)$ and for every $A\subset \Z$ open we have
    \begin{equation}
        |Du_\infty|(A) \le \liminf_{n\uparrow\infty}|Du_n|(A). \label{eq:lsc Du on open}
   \end{equation}
\end{proposition}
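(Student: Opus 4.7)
The plan is to reduce to boundedly-supported sequences via cutoffs, invoke a global lower-semicontinuity of $\rmCh_p$ under $L^p$-strong convergence, and localize to $A$ via the locality of the minimal weak upper gradient.

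\textbf{Regularity of $u_\infty$.} First, for any $\eta \in \Lip_{bs}(Z)$, the Leibniz rule for minimal $p$-weak upper gradients yields $\sup_n \|\nabla(\eta u_n)\|_{L^p(\mm_n)} < \infty$ and $\supp(\eta u_n) \subset \supp\eta$ is bounded, so I would apply Proposition \ref{prop:rellich W1p} to extract a subsequence with $\eta u_n \to \eta u_\infty$ in $L^p$-strong. I then invoke a global lower-semicontinuity of $\rmCh_p$ along $L^p$-strong convergence of boundedly-supported functions on varying $\RCD(K,N)$ spaces---which can be established from the Lipschitz relaxation definition of $\rmCh_p$ combined with a Mazur-type argument and Proposition \ref{prop:rellich W1p} applied to the Lipschitz approximants---to conclude $\eta u_\infty \in W^{1,p}(\X_\infty)$ and $\rmCh_p(\eta u_\infty) \le \liminf_n \rmCh_p(\eta u_n)$. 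Letting $\eta$ range over a cofinal family of bounded cutoffs then gives $u_\infty \in W^{1,p}_{loc}(\X_\infty)$.

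\textbf{Lower-semicontinuity on $A$.} Fix an open $A' \subset Z$ with $\overline{A'}$ compact and contained in $A$, and pick $\phi \in \Lip_{bs}(Z)$ with $0 \le \phi \le 1$, $\phi \equiv 1$ on a neighbourhood of $\overline{A'}$, and $\supp \phi \subset A$. Apply the previous step to $\phi u_n$; by the locality of the minimal weak upper gradient on the open set $\{\phi = 1\} \supset A'$ one has $|\nabla(\phi u_\infty)| = |\nabla u_\infty|$ $\mm_\infty$-a.e.\ on $A'$, hence
\[
\int_{A'} |\nabla u_\infty|^p \, d\mm_\infty \le \rmCh_p(\phi u_\infty) \le \liminf_n \rmCh_p(\phi u_n).
\]
To connect the right-hand side to $\int_A |\nabla u_n|^p \, d\mm_n$, I would combine the Leibniz bound $|\nabla(\phi u_n)| \le \phi |\nabla u_n| + |u_n| |\nabla \phi|$ (whose excess over $|\nabla u_n|$ is concentrated in the transition set $\{0 < \phi < 1\} \subset A$) with the $L^p$-strong convergence of $u_n$ on bounded sets (via Proposition \ref{prop:rellich W1p} applied to larger cutoffs), then pass to a weak-$\star$ subsequential limit $\mu$ of the finite measures $|\nabla u_n|^p \mm_n$ on $Z$. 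A Portmanteau-type argument together with locality identifies $\mu \ge |\nabla u_\infty|^p \mm_\infty$ as measures. Taking $A' \uparrow A$ and invoking monotone convergence then yields the Sobolev conclusion \eqref{eq:lsc chp on open}.

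\textbf{BV case and main obstacle.} The BV argument is parallel: replace $|\nabla u_n|^p \mm_n$ by the total variation measures $|Du_n|$ on $Z$, use the lower-semicontinuity of the total variation under $L^1$-convergence in place of that of $\rmCh_p$, and note that Proposition \ref{prop:rellich W1p} is stated directly in the $p=1$, $BV$ form. The main technical obstacle will be controlling the Leibniz error $\|u_n |\nabla \phi|\|_{L^p(\mm_n)}$ for $p \neq 2$: the Hilbert-space and heat-flow techniques available in \cite{GigliMondinoSavare13} for $p = 2$ do not apply, so the argument must instead pass through weak-$\star$ convergence of gradient measures on the ambient $Z$ and exploit Portmanteau together with the locality of the minimal weak upper gradient, a strategy relying crucially on the finite-dimensionality $N < \infty$ via the compactness input of Proposition \ref{prop:rellich W1p}.
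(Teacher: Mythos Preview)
Your membership step is essentially the paper's: cut off, use Proposition~\ref{prop:rellich W1p} to upgrade weak to strong $L^p$-convergence, and invoke the (known) global $\Gamma$-$\liminf$ for $\rmCh_p$ from \cite[Theorem~8.1]{AmbrosioHonda17}. That part is fine, though your sketch of how to reprove the global $\Gamma$-$\liminf$ via ``Lipschitz relaxation + Mazur'' is vague and unnecessary, since the paper simply cites this as an established input.

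The genuine gap is in the localization. Having $\rmCh_p(\phi u_\infty)\le\liminf_n\rmCh_p(\phi u_n)$ for cutoffs $\phi$ does \emph{not} yield \eqref{eq:lsc chp on open}, because $\rmCh_p(\phi u_n)=\int|\nabla(\phi u_n)|^p\,\d\mm_n$ picks up Leibniz contributions $|u_n||\nabla\phi|$ on the transition set $\{0<\phi<1\}$, and these do not vanish: $u_\infty$ need not be zero there, and shrinking the transition annulus forces $\Lip(\phi)\to\infty$ so the error $C_\delta\int|u_n|^p|\nabla\phi|^p\,\d\mm_n$ blows up (while sending $\delta\to 0$ first makes $C_\delta\to\infty$). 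Your Portmanteau step does not close the gap either: weak-$*$ convergence $|\nabla u_n|^p\mm_n\rightharpoonup\mu$ only gives $\mu(A)\le\liminf_n\int_A|\nabla u_n|^p\,\d\mm_n$, so what must actually be shown is $\mu\ge|\nabla u_\infty|^p\mm_\infty$---but this is precisely the content of \eqref{eq:lsc chp on open}. The phrase ``Portmanteau together with locality identifies $\mu\ge|\nabla u_\infty|^p\mm_\infty$'' asserts the conclusion rather than proving it, since Portmanteau compares $\mu$ with the \emph{sequence} $|\nabla u_n|^p\mm_n$, not with $|\nabla u_\infty|$.

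The paper's route is substantively different. After reducing to bounded, compactly supported, $L^2$-strong convergent $u_n$, it regularizes via the heat flow $h_t^n$ on each $\X_n$: the $L^\infty$-to-Lipschitz property makes $h_t^nu_n$ equi-Lipschitz and $W^{1,2}$-weakly convergent, so the weighted estimate $\int g|\nabla h_t^\infty u_\infty|\,\d\mm_\infty\le\liminf_n\int g|\nabla h_t^n u_n|\,\d\mm_n$ (for $g\in\Lip_{bs}(\Z)$ nonnegative, \emph{without} the $p$-th power) is available from \cite[Lemma~5.8]{AmbrosioHonda17}. The heat flow is then undone by writing $\int g|\nabla u_n|\ge\int(h_t^ng)|\nabla u_n|-\int|h_t^ng-g||\nabla u_n|$, using self-adjointness of $h_t^n$ and the $1$-Bakry--\'Emery contraction on the first term, and controlling the second via H\"older and $\|h_t^ng-g\|_{L^{p'}}\to 0$ uniformly in $n$ as $t\to 0$. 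Finally the passage from the resulting weighted $L^1$-type inequality $\int g|\nabla u_\infty|\le\liminf_n\int g|\nabla u_n|$ to the $L^p$-version on open sets is achieved via the identity
\[
\int_A|f|^p\,\d\mm=\sup\sum_k\mm(U_k)^{1-p}\Big(\int_{U_k}|f|\,\d\mm\Big)^p,
\]
the supremum being over finite disjoint open partitions of $A$. None of this machinery---heat flow stability, Bakry--\'Emery, the self-adjointness trick, the partition formula---appears in your proposal, and it is exactly what bridges the gap between the global $\Gamma$-$\liminf$ (which you correctly identify) and the localized statement on open sets. The same remarks apply to the BV case, where the heat flow is again the device that produces equi-Lipschitz approximants from merely $BV$ data.
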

\begin{proof}
    We subdivide the proof into two different steps handling the Sobolev and BV case at the same time.
    
    \noindent\textsc{Membership: $u_\infty \in W^{1,p}$/$BV$}. Let us first show that $u_\infty \in W^{1,p}(\X_\infty)$ (resp.\ $u_\infty \in BV(\X_\infty)$). Notice that this conclusion is non-trivial since $u_n$ only converges $L^p$-weak to $u_\infty$, hence \cite[Theorem 8.1]{AmbrosioHonda17} does not apply (while, for $p=2$, this is known \cite[ii) in Theorem 6.3]{GigliMondinoSavare13}). Consider $\eta \in \Lip_{bs}(\Z)$ and a suitable $R>0$ so that $\supp(\eta)\subset B_R(x_\infty)$. Then, since $x_n \to x_\infty$ in $\Z$, we have up to possibly discarding a finite number of indices that $ \supp(\eta u_n )\subset B_{2R}(x_n)$. Therefore, again up to a further subsequence, the compactness result in Theorem \ref{prop:rellich W1p} applies giving that $\eta u_n$ converges $L^p$-strong to some function $v$. However, $u_n$ is assumed to be $L^p$-weak converging to $u_\infty$, hence $\eta u_n$ is also $L^p$-weak converging to $\eta u_\infty$. By uniqueness of weak limits, we must have that $\eta u_n$ converges $L^p$-strong to $\eta u_\infty$ also along the original sequence.  If $p>1$ the Gamma-convergence result in \cite[Theorem 8.1]{AmbrosioHonda17} applies giving
    \[
        \rmCh_p^{1/p}(\eta u_\infty) \le \liminf_{n\uparrow\infty}\rmCh_p^{1/p}(\eta u_n)\le \liminf_{n\uparrow\infty}\, \Lip(\eta)\|u_n\|_{L^p(\mm_n)} + \|\eta\|_{L^\infty}\|\nabla u_n\|_{L^p(\mm_n)}  < \infty,
    \]
    by the Leibniz rule and the assumptions. Instead, if $p=1$ we can rely on \cite[Thorem 6.4]{AmbrosioHonda17} to deduce
    \[
        |D(\eta u_\infty)|(\X_\infty) \le \liminf_{n\uparrow\infty} |D(\eta u_n)|(\X_n) \le \liminf_{n\uparrow\infty} \Lip(\eta)\|u_n\|_{L^1(\mm_n)} + \|\eta\|_{L^\infty(\mm_n)}|Du_n|(\X_n)<\infty,
    \]
    again by the Leibniz rule for BV functions and the assumptions. All in all, by arbitrariness of $\eta$, we have just deduced that $u_\infty\in W^{1,p}_{loc}(\X_\infty)$ (resp. $u_\infty \in BV_{loc}(\X_\infty)$). Now, if we further choose $\eta$ to be $1$-Lipschitz with $|\eta|\le 1$ and such that $\eta=1$ in $B_{R-1}^Z(x_\infty)$, the above and locality in $W^{1,p}_{loc}(\X_\infty)$ guarantee that
    \[
     \|\nabla u_\infty\|_{L^p(B_{R-1}(x_\infty))} \le \rmCh_p^{1/p}(\eta u_\infty) \le \sup_n \|u_n\|_{L^p(\mm_n)}+\|\nabla u_n\|_{L^p(\mm_n)} <\infty.
    \]
    Similarly, the locality of the total variation on open sets yields
    \[
        |Du_\infty|(B_R(x_\infty)) = |D(\eta u_\infty)|(B_R(x_\infty)) \le |D(\eta u_\infty)|(\X_\infty) \le \sup_{n\in\N} \|u_n\|_{L^1(\mm_\infty)} + |Du_n|(\X_n)<\infty.
    \]
    Being $R>0$ arbitrary, we deduce $u_\infty \in W^{1,p}(\X_\infty)$ (resp. $u \in BV(\X_\infty)$) as desired.

    \noindent\textsc{Reduction step}. First, observe that up to considering ${A_R}\coloneqq B_R(x_\infty) \cap A$ and then arguing by monotonicity sending $R\to\infty$, it is enough to prove \eqref{eq:lsc chp on open},\eqref{eq:lsc Du on open} for $A$ bounded open. Moreover, since $u_\infty \in W^{1,p}(\X_\infty)$ (resp. $u_\infty \in BV(\X_\infty)$), by locality we can further assume that $\supp(u_n)$ is equibounded, say contained in $B_R(x_\infty)$, up to replacing $u_n,u_\infty$ respectively with $(\eta u_n),(\eta u_\infty)$ for some $1$-Lipschitz function $\eta$ that is boundedly supported, non-negative and so that $\eta \equiv 1$ on $A$.
    
    It is also possible to reduce to the case in which $ \|u_\infty\|_{L^\infty(\mm_\infty)} \vee \left(\sup_n \|u_n\|_{L^\infty(\mm_n)} \right)<\infty$. Indeed, thanks to the fact that we are supposing equi-bounded supports, we know that actually $u_n$ converges $L^p$-strong to $u_\infty$ (by Proposition \ref{prop:rellich W1p}, and since $L^p$-weak limits are unique). In particular, for all $M>0$ the truncated sequence $u_n^M \coloneqq (-M)\wedge u_n \vee M$ converges as well $L^p$-strong to $u_\infty^M \coloneqq (-M)\wedge u_\infty \vee M$ an \eqref{eq:lsc chp on open},\eqref{eq:lsc Du on open} would follow by monotonicity and the chain rule-argument sending $M\uparrow \infty$.

    All in all, after these reductions steps it sufficient to prove \eqref{eq:lsc chp on open},\eqref{eq:lsc Du on open}  under the additional assumption that $u_n\in L^2(\mm_n),u_\infty \in L^2(\mm_\infty)$ and, by \cite[(e) in Proposition 3.3]{AmbrosioHonda17}, that $u_n$ converges $L^2$-strong to $u_\infty$.

    \noindent\textsc{Proof of \eqref{eq:lsc chp on open}}. Here we assume $p>1$. We shall argue similarly to \cite[Lemma 5.8]{AmbrosioHonda17} and exploit regularization properties of the heat flow on ${\sf RCD}$ spaces. We denote by $h_t^n f$ the heat flow evolution on the space {$\X_n$} starting from $f_n \in L^2(\mm_n)$ at time $t>0$ for every $n \in \bar \N$ (see, e.g. \cite{Gigli14}). Thanks to standard gradient flow estimates on Hilbert spaces and the $L^\infty$-to-Lip regularization in ${\sf RCD}$-setting (see, e.g., \cite[Remark 5.2.11 and Proposition 6.1.6]{GigliPasqualetto20book}), we have
    \[
    \|\nabla h_t^nu_n\|^2_{L^2(\mm_n)} \le \frac{\|u_n\|^2_{L^2(\mm_n)}}{2t},\qquad \|\nabla h^n_tu_n\|_{L^\infty(\mm_n)} \le C_K\frac{\|u_n\|_{L^\infty(\mm_n)}}{\sqrt t},
    \]
    where $C_K>0$ depends only on the uniform Ricci lower bound constant $K\in\R$. In particular, those estimates are uniform in $n\in\N$, recalling also that $\mm_n(\supp \, u_n) \le \mm_n(B_R(x_n))$ for a suitable radius $R>0$ and since $\mm_n(B_R(x_n))$ is converging to some finite value, thanks to the underlying pmGH-convergence. Notice that the latter implies that $h^n_tu_n$ have equi-Lipschitz representatives (by the Sobolev-to-Lipschitz property on ${\sf RCD}$-spaces \cite[Theorem 4.10]{Gigli13_splitting}). By stability properties of the heat flow (cf.\ \cite[Theorem 6.11]{GigliMondinoSavare13}), we know that $h_t^nu_n$ converges $L^2$-strong to $h_t^\infty u_\infty$. By the first estimate in the above, $W^{1,2}$-weak convergence also follows. We are therefore in position to invoke \cite[Lemma 5.8]{AmbrosioHonda17} (that is valid for arbitrary pmGH-converging ${\sf RCD}$-spaces) to deduce that for all $g \in \Lip_{bs}(Z)$ nonnegative, we have
    \begin{equation}
        \int g|\nabla h^\infty_t u_\infty| \,\d\mm_\infty\le \liminf_{n\uparrow\infty} \int g|\nabla h^n_t u_n| \,\d\mm_n,\qquad \forall t>0.
    \label{eq:estim 0}
    \end{equation}
    The above is well defined and finite since $g$ is boundedly supported and we know that $h^n_tu_n$ are equi-Lipschitz. We claim that the above holds also at $t=0$. Indeed, for all $t>0$ we can write 
    \begin{align*}
        \liminf_{n\uparrow\infty}\int g|\nabla u_n|\,\d \mm_n &\ge \liminf_{n\uparrow\infty} \int h^n_t g |\nabla u_n|\,\d \mm_n  - \limsup_{n\uparrow\infty}\int|h^n_tg- g||\nabla u_n|\,\d\mm_n \\
        &\ge   e^{Kt} \liminf_{n\uparrow\infty}\int_U |\nabla h^n_t u_n|\,\d \mm_n  - C\limsup_{n\uparrow\infty}\left(\int|h^n_t g- g|^{p'}\,\d\mm_n\right)^{\frac{1}{p'}}\\
        &\overset{\eqref{eq:estim 0}}{\ge} e^{Kt} \int g |\nabla h^\infty_t u_\infty|\,\d \mm_\infty - C\limsup_{n\uparrow\infty}\left(\int|h^n_t g- g|^{p'}\,\d\mm_n\right)^{\frac{1}{p'}},
    \end{align*}
    having used, in the second line, that the heat flow is {self-}adjoint (see, e.g., \cite[Corollary 5.2.9]{GigliPasqualetto20book}), the $1$-Bakry-\'Emery contraction estimate for a Lipschitz function (c.f. \cite{GigliHan14}) and H\"older inequality with $C\coloneqq \|\nabla u_n\|_{L^p(\mm_n)}$ with $p'$ H\"older conjugate. Now, we notice that $\lim_{t\to 0}\limsup_{n\uparrow\infty}\int|h^n_tg-g|^{p'}\,\d\mm_n =0$ by \cite[Proposition 4.6]{AmbrosioHonda17} using that $h^n_tg$ converges $L^{p'}$-strong to $h^\infty_t g$ in $L^{p'}$-strong by the weak maximum principle and the stability of the heat flow. We can then deduce
    \begin{equation}
     \liminf_{n\uparrow\infty}\int g|\nabla u_n|\,\d \mm_n \ge  \liminf_{t\to 0}e^{Kt} \int g |\nabla h^\infty_t u_\infty|\,\d \mm_\infty \ge  \int g |\nabla u_\infty|\,\d \mm_\infty,\label{eq:lsc chp glsc}
    \end{equation}
    by weak lower semicontinuity, thus proving the claim.
    
    By arbitrariness of $g \in \Lip_{bs}(\Z)$, we directly deduce
    \[
         \liminf_{n\uparrow\infty}\int_U |\nabla u_n|\,\d \mm_n \ge  \int_U |\nabla u_\infty|\,\d \mm_\infty,
    \]
    for every $U\subset \Z$ open and bounded. From this, the claimed estimate \eqref{eq:lsc chp on open} follows now taking into account the following identity
    \[
        \int_A |f|^p\,\d \mm = \sup \sum_k \frac{1}{\mm_\infty(U_k)}\left(\int_{U_k} |f|\,\d \mm_\infty\right)^p,
    \]
    for $f \in L^p(\mm_\infty)$ and where the sup is taken among all partitions $U_k$ of pairwise disjoint bounded open sets of $A$ so that $\mm_\infty(U_k)>0$ (see at the end of the proof of \cite[Lemma 5.8]{AmbrosioHonda17} for $p=2$).   

     \noindent\textsc{Conclusion: proof of \eqref{eq:lsc Du on open}}. Here we consider the case $p=1$ and conclude the proof. Recall that, by the reduction step, we can assume that $u_n \in L^2(\mm_n)$ converges also $L^2$-strong to $u_\infty \in L^2(\mm_\infty)$, that $|u_n|,|u_\infty|\le M$ for some $M>0$ and that $\supp(u_n)\cup \supp(u_\infty)$ are equibounded in $\Z$. Let $t>0$, consider the heat flow evolution $h^n_t u_n$ and recall that $h^n_t  u_n \in \Lip(\X)$ by the $L^\infty$-to-Lip regularization. Again, by the $1$-Bakry-\'Emery contraction for Lipschitz functions (\cite{GigliHan14}), we deduce that for all $t>0$ the sequence $h^n_t u_n$ {is} equi-Lipschitz hence
     \[
     \sup_{n\in\N} \| \nabla h_t^n u_n\|_{L^2(\mm_n)} \le \sup_{n\in\N} \Lip(h^n_tu_n) |D h^n_t u_n|(\X_n) \le   e^{-Kt} \sup_{n\in\N} \Lip(h^n_tu_n)|D u_n|(\X_n)<\infty,
     \]
     where we used the identification result for minimal upper gradients in \cite{GigliHan14}. In particular, we have that $h_t^nu_n$ converges to $h^\infty_t u_\infty$ in $W^{1,2}$-weak, taking also into account the  stability of the heat flow. We can thus combine the estimate
     \[
       \liminf_{n\uparrow\infty} \int_A |\nabla h^n_t u_n|\,\d\mm_n \le e^{-Kt}\liminf_{n\uparrow\infty}|Du_n|(A).
     \]
     with (again by \cite[Lemma 5.8]{AmbrosioHonda17})
     \[
         \liminf_{n\uparrow\infty} \int_A |\nabla h^n u_n|\,\d\mm_n \ge \int_A |\nabla h^\infty_t u_\infty|\,\d\mm_\infty = |Dh_t^\infty u_\infty|(A),
     \]
     to conclude the proof by sending $t\downarrow 0$ and using the lower semicontinuity of the total variation on open sets.
\end{proof}
Notice that, in Proposition \ref{prop:rellich W1p} and in Proposition \ref{prop:lsc on open}, we only used the Gamma-convergence result of \cite[Theorem 8.1]{AmbrosioHonda17}. Hence, by a combination of the two results we can finally upgrade to the Mosco-convergence of Cheeger energies.
\begin{theorem}\label{thm:Mosco Chp}
Let $p\in(1,\infty)$. Then, we have
\begin{itemize}
    \item[${\rm i)}_p$]  if $u_n \in L^p(\mm_n)$ converges $L^p$-weak to some $u_\infty \in L^p(\mm_\infty)$ then
    \[
        \rmCh_p(u_\infty)\le \liminf_{n\uparrow\infty}\rmCh_p(u_n);
    \]
    \item[${\rm ii)}_p$] for every $u_\infty \in L^p(\mm_\infty)$ there is  $u_n \in L^p(\mm_n)$ converging $L^p$-strong to $u_\infty$ and so that
    \[
        \rmCh_p(u_\infty)\ge \limsup_{n\uparrow\infty}\rmCh_p(u_n).
    \]
    \end{itemize}
    Furthermore, we have
    \begin{itemize}
    \item[${\rm i)}_1$] if $u_n \in L^1(\mm_n)$ converges $L^1$-weak to some $u_\infty \in L^p(\mm_\infty)$ then
    \[
        |Du_\infty|(\X_\infty) \le \liminf_{n\uparrow\infty}|Du_n|(\X_n);
    \]
    \item[${\rm ii)}_1$] for every $u_\infty \in L^1(\mm_\infty)$ there is $u_n \in L^1(\mm_n)$ converging $L^1$-strong to $u_\infty$ and so that
    \[
         |Du_\infty|(\X_\infty) \ge \limsup_{n\uparrow\infty}|Du_n|(\X_n);
    \]
    \end{itemize}
\end{theorem}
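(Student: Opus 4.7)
The plan is to assemble the four assertions from the compactness result in Proposition \ref{prop:rellich W1p}, the open-set lower semicontinuity in Proposition \ref{prop:lsc on open}, and the localized Gamma-convergence of $p$-Cheeger energies (resp.\ total variations) of \cite[Theorem 8.1]{AmbrosioHonda17} (resp.\ \cite[Theorem 6.4]{AmbrosioHonda17}), which there requires a finite reference measure.

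For the liminf statements ${\rm i)}_p$ and ${\rm i)}_1$ we may assume the right-hand sides are finite (otherwise the inequality is trivial) and extract a subsequence realizing the liminf with uniform $W^{1,p}$-bound (resp.\ uniform BV-bound). This places us in the hypotheses of Proposition \ref{prop:lsc on open}; applying its conclusion with the open set $A = \Z$ directly gives
\[
\rmCh_p(u_\infty) = \int_{\Z} |\nabla u_\infty|^p\,\d\mm_\infty \le \liminf_{n\uparrow\infty} \int_{\Z} |\nabla u_n|^p\,\d\mm_n = \liminf_{n\uparrow\infty} \rmCh_p(u_n),
\]
and the analogous chain for total variations in the BV case.

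For the recovery statements ${\rm ii)}_p$ and ${\rm ii)}_1$, if $\rmCh_p(u_\infty) = +\infty$ (resp.\ $|Du_\infty|(\X_\infty) = +\infty$) it suffices to exhibit any $L^p$-strong (resp.\ $L^1$-strong) approximation, produced e.g.\ by restricting to $\X_n$ a bounded Lipschitz truncation of a McShane extension of $u_\infty$ on $\Z$. Assume then $u_\infty \in W^{1,p}(\X_\infty)$. By truncation $|u_\infty|\wedge M$, cut-off $\eta_R u_\infty$ with $\eta_R \in \Lip_{bs}(\Z)$ increasing to $1$, and heat-flow mollification on $\X_\infty$ combined with the Sobolev-to-Lipschitz property, a diagonal argument reduces the construction to $u_\infty \in \Lip_{bs}(\X_\infty)$. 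Fix $R'>R$ with $\supp u_\infty \subset B_R(x_\infty)$ and $\mm_\infty(\partial B_{R'}(x_\infty))=0$. By Bishop-Gromov monotonicity on $\RCD(K,N)$ spaces, the normalization constants $c_n := \mm_n(B_{R'}(x_n))$ are bounded away from $0$ and $\infty$ uniformly in $n$ and converge to $c_\infty$. The rescaled probability measures $\mm_n|_{B_{R'}(x_n)}/c_n$ retain the pmGH-convergence, so \cite[Theorem 8.1]{AmbrosioHonda17} supplies a recovery sequence $u_n$ converging $L^p$-strong to $u_\infty$ with $\limsup_n \rmCh_p(u_n) \le \rmCh_p(u_\infty)$, after undoing the rescaling. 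The statement ${\rm ii)}_1$ follows identically via \cite[Theorem 6.4]{AmbrosioHonda17}.

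The main obstacle is precisely the construction of the recovery sequence in the absence of a global probability reference measure: the naive choice $u_n := \tilde u|_{\X_n}$ with $\tilde u \in \Lip(\Z)$ a McShane extension only yields the bound $\rmCh_p(u_n) \le \int \lip(\tilde u)^p\,\d \mm_n$, which in general strictly exceeds $\rmCh_p(u_\infty)$. Overcoming this forces the localization-renormalization recipe above, and the $N<\infty$ hypothesis is essential to control the normalization constants uniformly along the sequence.
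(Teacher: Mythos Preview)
Your treatment of ${\rm i)}_p$ and ${\rm i)}_1$ is essentially the paper's: pass to a subsequence with uniform energy bound and invoke Proposition~\ref{prop:lsc on open} (the paper takes $A=B_R(x_\infty)$ and sends $R\uparrow\infty$, you take $A=\Z$ directly; these are equivalent).

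For ${\rm ii)}_p$ and ${\rm ii)}_1$ you diverge from the paper, and your detour rests on a misreading of \cite{AmbrosioHonda17}. The $\Gamma$-$\limsup$ (recovery-sequence) part of \cite[Theorem~8.1]{AmbrosioHonda17} and its BV analogue already hold for general pmGH-converging $\RCD(K,N)$ spaces with $\sigma$-finite reference measure; the probability-measure or common-isoperimetric-profile hypotheses in \cite{AmbrosioHonda17} are needed only for \emph{other} results there (precisely the Rellich compactness and the weak-$\liminf$, which is what Propositions~\ref{prop:rellich W1p} and~\ref{prop:lsc on open} are supplying in this paper). Accordingly the paper simply cites \cite{AmbrosioHonda17} for ${\rm ii)}_p$ and ${\rm ii)}_1$ and is done. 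Your localization-renormalization scheme is therefore unnecessary, and as written it also has a genuine gap: the truncated spaces $(\X_n,\sfd_n,\mm_n|_{B_{R'}(x_n)}/c_n)$ are \emph{not} $\RCD(K,N)$ spaces---the curvature-dimension condition is a global property of the measure and is destroyed by restriction to a ball---so you cannot legitimately feed them into \cite[Theorem~8.1]{AmbrosioHonda17}. To repair this you would have to reprove the $\Gamma$-$\limsup$ for these auxiliary spaces from scratch, at which point you may as well observe that the original argument in \cite{AmbrosioHonda17} never used finiteness of the measure for that half.
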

\begin{proof}
    Conclusions ii)$_p$ and ii)$_1$ are proved in \cite{AmbrosioHonda17}. We shall prove here i)$_p$ and i)$_1$ handling both cases together and assuming that the right hand sides of both conclusions are finite. In this case, up to a not relabeled subsequence, it is not restrictive to assume that eventually $u_n \in W^{1,p}(\X_n)$ (resp.\ $u_n \in BV(\X_n)$) for all $n$ large enough and $\sup_n \rmCh_p(u_n)<\infty$ (resp.\ $\sup_n |Du_n|(\X_n)<\infty$). Finally, we can write \eqref{eq:lsc chp on open},\eqref{eq:lsc Du on open} for an increasing collection of balls $A=B_R(x_\infty)$ and both conclusions follow by monotonicity and taking $R\uparrow \infty$.
\end{proof}
We single out the following technical property of $W^{1,p}$-strong converging sequences for future use.
\begin{lemma}\label{lem:Lpstrong gradient}
    Let $p\in(1,\infty)$ and suppose that $u_n \in W^{1,p}(\X_n)$ converges $W^{1,p}$-weak to some $u_\infty \in W^{1,p}(\X_\infty)$. If $\rmCh_p(u_n)\to\rmCh_p(u_\infty)$, then $|\nabla u_n|$ converges $L^p$-strong to $|\nabla u_\infty|$.
\end{lemma}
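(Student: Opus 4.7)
The plan is to split the strong convergence of $|\nabla u_n|$ into two pieces: $L^p$-weak convergence to $|\nabla u_\infty|$, and convergence of $L^p(\mm_n)$-norms. The norm piece is immediate from the hypothesis, since $\||\nabla u_n|\|_{L^p(\mm_n)}^p=\rmCh_p(u_n)\to \rmCh_p(u_\infty)=\||\nabla u_\infty|\|_{L^p(\mm_\infty)}^p$. Therefore the content of the lemma reduces entirely to identifying the weak limit.

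Because $(|\nabla u_n|)_n$ is bounded in $L^p$ uniformly in $n$, by \cite[Proposition 3.3]{AmbrosioHonda17} (sequential $L^p$-weak compactness along varying spaces) up to a subsequence $|\nabla u_n|$ converges $L^p$-weak to some $g\in L^p(\mm_\infty)$. I would then show $g=|\nabla u_\infty|$ $\mm_\infty$-a.e. by combining two one-sided bounds. For the lower bound I would fix an arbitrary nonnegative $\phi\in\Lip_{bs}(\Z)$ and use $L^p$-weak convergence to write $\int \phi\,g\,\d\mm_\infty=\lim_n\int\phi\,|\nabla u_n|\,\d\mm_n$, and then invoke the estimate $\liminf_n\int\phi|\nabla u_n|\,\d\mm_n\geq \int\phi|\nabla u_\infty|\,\d\mm_\infty$, which is precisely \eqref{eq:lsc chp glsc} established in the proof of Proposition \ref{prop:lsc on open} via heat flow regularization and the $1$-Bakry-\'Emery contraction. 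Arbitrariness of $\phi\geq 0$ in $\Lip_{bs}(\Z)$ then yields $g\geq |\nabla u_\infty|$ $\mm_\infty$-a.e. For the upper bound, by weak lower semicontinuity of the $L^p$-norm with varying base measures combined with the norm convergence, $\|g\|_{L^p(\mm_\infty)}\leq \liminf_n\||\nabla u_n|\|_{L^p(\mm_n)}=\||\nabla u_\infty|\|_{L^p(\mm_\infty)}$. The two inequalities together force $g=|\nabla u_\infty|$ $\mm_\infty$-a.e.

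Since the limit is independent of the chosen subsequence, by a standard Urysohn-type argument the whole sequence $|\nabla u_n|$ converges $L^p$-weak to $|\nabla u_\infty|$. Combined with the convergence of norms this upgrades to $L^p$-strong convergence, as desired.

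The main obstacle is the one-sided lower bound $g\geq |\nabla u_\infty|$: one-sided testing against general nonnegative functions is precisely the type of statement that requires the finer heat-flow and $1$-Bakry-\'Emery input used in Proposition \ref{prop:lsc on open}. However, since that ingredient has already been proved in the form of \eqref{eq:lsc chp glsc} earlier in the section, the present lemma becomes a rather direct consequence once the weak-compactness and norm-convergence pieces are assembled.
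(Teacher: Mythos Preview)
Your proof is correct and matches the paper's approach: pass to an $L^p$-weak subsequential limit $g$ of $|\nabla u_n|$, show $g\ge|\nabla u_\infty|$ via the lower-semicontinuity input from Proposition~\ref{prop:lsc on open}, then force equality from the norm convergence and weak lower semicontinuity, and finish by the subsequence (Urysohn) argument. The only point to watch is that \eqref{eq:lsc chp glsc} is derived in the paper \emph{after} the reduction step (cutoff to equibounded supports, $L^\infty$ truncation, upgrade to $L^2$-strong convergence), so to invoke it directly here you should either repeat that reduction---which goes through verbatim since $\phi$ has bounded support---or, as the paper does, test on balls via the stated conclusion \eqref{eq:lsc chp on open} instead.
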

\begin{proof}
    Since $\sup_n \| \nabla u_n\|_{L^p(\mm_n)}<\infty$, we infer the existence of a nonnegative function $G \in L^p(\mm_\infty)$ so that $|\nabla u_n|$ converge $L^p$-weak to $G$, along a suitable not relabeled subsequence. Fix now any ball $B\subset \X_\infty$ and consider its $\sfd_\infty$-closure $\overline B$. Clearly, as a subset $\bar B \subset \Z$ it is $\sfd$-closed in $\Z$ as the isometric embedding is a closed map. Since $|\nabla u_n| \mm_n$ converges weakly to $G\mm_\infty$ in duality with $C_{bs}(\Z)$, and since boundaries of balls are negligible by Bishop-Gromov, by weak upper semicontinuity on closed sets we can write    
    \[
    \int_B G^p\,\d \mm_\infty = \int_{\overline{B}} G^p \,\d \mm_\infty \ge \limsup_{n\uparrow\infty}  \int_{\overline{B}} |\nabla u_n|^p\,\d \mm_n \ge  \liminf_{n\uparrow\infty}  \int_B |\nabla u_n|^p\,\d \mm_n \overset{\eqref{eq:lsc chp on open} }{\ge} \int_B|\nabla u_\infty|^p\,\d\mm_\infty.
    \]
    By arbitrariness of $B$, we therefore deduce that $|\nabla u_\infty|\le G$ at $\mm_\infty$-a.e.\ point. However, by $L^p$-weak lower semicontinuity and the current assumptions, we get
    \[
    \| G\|_{L^p(\mm_\infty)}\le \liminf_{n\uparrow\infty}\|\nabla u_n\|_{L^p(\mm_n)} = \rmCh_p^{1/p}(u_\infty) \le \|G\|_{L^p(\mm_\infty)}.
    \]
    Therefore, all the inequalities are equalities, giving in turn that $G=|\nabla u_\infty|$ $\mm_\infty$-a.e.\ and that $|\nabla u_n|$ converges $L^p$-strong to $|\nabla u_\infty|$. Moreover, being the limit independent of the subsequence chosen at the beginning, this occurs along the original sequence. The proof is therefore concluded.
\end{proof}
We conclude with the analogue property for the BV case.
\begin{lemma}\label{lem:strict covnergence}
    Suppose that $u_n \in BV(\X_n)$ converges $L^1$-weak to some $u_\infty \in BV(\X_\infty)$ and that $|Du_n|(\X_n) \to |D u_\infty|(\X_\infty)$. Then $|D u_n| \weakto |D u_\infty|$ in duality with $C_b(\Z)$.
\end{lemma}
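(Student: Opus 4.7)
The plan is to establish the weak convergence $|Du_n|\weakto |Du_\infty|$ in duality with $C_b(\Z)$ by combining the lower semicontinuity on open sets from Proposition \ref{prop:lsc on open} with the hypothesis of convergence of total variations, in the spirit of the Portmanteau theorem.

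The first step is to derive a companion upper bound on closed sets: I claim that for every closed $F\subset \Z$ one has $\limsup_n |Du_n|(F)\le |Du_\infty|(F)$. Indeed, since $\Z\setminus F$ is open, Proposition \ref{prop:lsc on open} yields $|Du_\infty|(\Z\setminus F)\le \liminf_n |Du_n|(\Z\setminus F)$. Rewriting $|Du_n|(F)$ as the difference of total masses and using the hypothesis $|Du_n|(\X_n)\to |Du_\infty|(\X_\infty)$, the claim follows at once by taking $\limsup$.

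Equipped with the open lower bound and the closed upper bound, the next step is to upgrade to integration against $C_b(\Z)$ test functions via layer-cake. For a nonnegative $\phi\in C_b(\Z)$ write
\[
    \int \phi\,\d |Du_n| = \int_0^{\|\phi\|_{L^\infty}} |Du_n|(\{\phi\ge t\})\,\d t = \int_0^{\|\phi\|_{L^\infty}} |Du_n|(\{\phi> t\})\,\d t,
\]
and observe that $\{\phi>t\}$ is open while $\{\phi\ge t\}$ is closed. Applying the open lower bound pointwise in $t$ and invoking Fatou's lemma gives $\int \phi\,\d |Du_\infty|\le \liminf_n \int \phi\,\d |Du_n|$. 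Symmetrically, using the closed upper bound together with the reverse Fatou lemma (legitimate since $|Du_n|(\{\phi\ge t\})\le \sup_n |Du_n|(\X_n)<\infty$) yields $\limsup_n \int \phi\,\d |Du_n|\le \int \phi\,\d |Du_\infty|$. Combining the two inequalities proves the desired convergence for nonnegative test functions.

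Finally, for a general $\phi\in C_b(\Z)$ I would decompose $\phi=\phi_+-\phi_-$ into positive and negative parts, both still in $C_b(\Z)$, and apply the previous step to each. No serious obstacle is expected: the whole argument is a clean Portmanteau-layer-cake mechanism, with the only mild technical care being the application of the reverse Fatou lemma, which is justified by the uniform total variation bound inherited from the assumption $|Du_n|(\X_n)\to |Du_\infty|(\X_\infty)$.
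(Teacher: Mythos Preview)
Your argument is correct and matches the paper's approach: the paper's proof is a one-line reference to the standard characterization of weak convergence of finite nonnegative measures via lower semicontinuity on open sets (Proposition~\ref{prop:lsc on open}) together with Cavalieri's formula, which is precisely the Portmanteau--layer-cake mechanism you spelled out in detail. The only additional remark is that the uniform bound $\sup_n |Du_n|(\X_n)<\infty$ needed both for Proposition~\ref{prop:lsc on open} and for the reverse Fatou step is indeed guaranteed by the convergence hypothesis $|Du_n|(\X_n)\to |Du_\infty|(\X_\infty)$, as you noted.
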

\begin{proof}
    This follows by standard characterization of weak convergence of finite nonnegative measures using, in this setting, the lower semicontinuity on open sets \eqref{eq:lsc Du on open} and Cavalieri's formula (see, e.g., the arguments in the proof of \cite[Proposition 4.5.6]{DiMarinoPhD}).
\end{proof}
\subsection{Technical results for locally Sobolev functions}
We extend some technical convergence results to the case of locally Sobolev functions. This is necessary for the goal of this note, as a Sobolev inequality of Euclidean type implies global integrability for a different exponent from that of the gradient.

We shall need the following lower semicontinuity result of gradient norms of locally Sobolev functions, using Theorem \ref{thm:Mosco Chp} that is now available.
\begin{proposition}\label{prop:Ch is Lp-lsc}
Let $p \in (1,\infty)$ and suppose $u_n\in W^{1,p}_{loc}(\X_n)$ converges $L^p_{loc}$-strong to $u_\infty$. Then 
\begin{equation}
      \|\nabla u_\infty \|^p_{L^p(\mm_\infty)} \le \liminf_{n\uparrow\infty} \| \nabla u_n \|^p_{L^p(\mm_n)},
\label{eq:Ch is Lp-lsc}  
\end{equation}
meaning that, if the right hand side is finite, then $u_\infty\in W^{1,p}_{loc}(\X_\infty)$ and \eqref{eq:Ch is Lp-lsc} holds. 
\end{proposition}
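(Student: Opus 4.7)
The plan is to localize the problem with cutoff functions, reduce to the setting of Proposition \ref{prop:lsc on open} (which already knows about lower semicontinuity on open sets for \emph{global} Sobolev functions), and then exhaust $\X_\infty$ by balls. We may assume the right-hand side of \eqref{eq:Ch is Lp-lsc} is finite and, after extracting a subsequence, that the liminf is a genuine limit and $u_n \in W^{1,p}(\X_n)$ for every $n$ (since the local assumption plus $\sup_n\|\nabla u_n\|_{L^p(\mm_n)}<\infty$ gives a uniform bound only on Cheeger energies, not on $L^p$-norms, which is exactly why we localize).

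\smallskip
For each $R>0$ fix a $1$-Lipschitz cutoff $\eta_R\in \Lip_{bs}(\Z)$ with $0\le \eta_R\le 1$, $\eta_R\equiv 1$ on $B_R(x_\infty)$ and $\supp(\eta_R)\subset B_{R+1}(x_\infty)$. Since $u_n\in W^{1,p}_{loc}(\X_n)$, by the Leibniz rule $\eta_R u_n\in W^{1,p}(\X_n)$, and the $L^p_{loc}$-strong hypothesis gives $\eta_R u_n\to \eta_R u_\infty$ in $L^p$-strong (hence $L^p$-weak). Using Leibniz again,
\[
\|\nabla(\eta_R u_n)\|_{L^p(\mm_n)}\;\le\;\|\nabla u_n\|_{L^p(\mm_n)}+\|\lip(\eta_R)\, u_n\|_{L^p(\mm_n)},
\]
and the second term is uniformly bounded in $n$ by the $L^p$-strong convergence of $\eta'_R u_n\to \eta'_R u_\infty$ for any slightly larger cutoff $\eta'_R\equiv 1$ on $\supp(\eta_R)$. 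Therefore $\sup_n\|\eta_R u_n\|_{W^{1,p}(\X_n)}<\infty$, which allows us to invoke Proposition \ref{prop:lsc on open}: we obtain $\eta_R u_\infty\in W^{1,p}(\X_\infty)$, and taking the open set $A=B_R(x_\infty)\subset \Z$ gives
\[
\int_{B_R(x_\infty)} |\nabla(\eta_R u_\infty)|^p\,\d\mm_\infty\;\le\;\liminf_{n\uparrow\infty}\int_{B_R(x_\infty)}|\nabla(\eta_R u_n)|^p\,\d\mm_n.
\]

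\smallskip
Now we use locality of minimal $p$-weak upper gradients: since $\eta_R\equiv 1$ on the open set $B_R(x_\infty)$, we have $\eta_R u_\infty=u_\infty$ there (and analogously $\eta_R u_n=u_n$ on $B_R(x_\infty)\cap \X_n$), whence $|\nabla(\eta_R u_\infty)|=|\nabla u_\infty|$ $\mm_\infty$-a.e.\ on $B_R(x_\infty)$ and $|\nabla(\eta_R u_n)|=|\nabla u_n|$ $\mm_n$-a.e.\ on $B_R(x_\infty)$. Since the right-hand side of the displayed inequality is bounded by $\liminf_n\|\nabla u_n\|_{L^p(\mm_n)}^p$, we conclude
\[
\int_{B_R(x_\infty)}|\nabla u_\infty|^p\,\d\mm_\infty\;\le\;\liminf_{n\uparrow\infty}\int|\nabla u_n|^p\,\d\mm_n.
\]
Moreover, by arbitrariness of $R$ and $\eta_R$ and locality of $|\nabla\cdot|$, the membership $\eta_R u_\infty\in W^{1,p}(\X_\infty)$ upgrades to $u_\infty\in W^{1,p}_{loc}(\X_\infty)$. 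Letting $R\uparrow\infty$ and applying monotone convergence on the left-hand side yields \eqref{eq:Ch is Lp-lsc}, completing the proof.

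\smallskip
The main conceptual obstacle is exactly the mismatch between the local hypothesis (only $u_n\in W^{1,p}_{loc}$, with no global $L^p$ control) and the global statements available in Proposition \ref{prop:lsc on open} and Theorem \ref{thm:Mosco Chp}; the cutoff-plus-locality argument above is the way to bridge this, and the bookkeeping for the uniform $W^{1,p}$-bound of $\eta_R u_n$ is the only step that requires some care.
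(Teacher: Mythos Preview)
Your proof is correct and follows essentially the same approach as the paper: localize with a cutoff $\eta$ equal to $1$ on a ball, apply Proposition \ref{prop:lsc on open} to $\eta u_n$, use locality of weak upper gradients to identify $|\nabla(\eta u_n)|=|\nabla u_n|$ on the ball, and exhaust by balls. The only remark is that your opening sentence about assuming ``$u_n\in W^{1,p}(\X_n)$ for every $n$'' is confusingly phrased (and, as your own parenthetical correctly notes, this is precisely what one \emph{cannot} assume); fortunately this claim is never actually used, since you work with $\eta_R u_n\in W^{1,p}(\X_n)$ throughout.
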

\begin{proof}
If the right hand side in \eqref{eq:Ch is Lp-lsc} is infinite then there is nothing to prove, so let us assume it to be finite. Fix any ball $B\subset Z$  and take $\eta \in \Lip_{bs}(\Z)$ constantly equal to $1$ on $B$. Since $\eta u_n$ converges $L^p$-strong to $\eta u_\infty$, Proposition \ref{prop:lsc on open} yields
\[
    \int_B  |\nabla u_\infty |^p \, \d \mm_\infty = \int_B |\nabla (\eta u_\infty) |^p \, \d \mm_\infty  \le \liminf_n \int_B  |\nabla (\eta u_n)|^p \, \d \mm_n \le \liminf_n\|\nabla u_n\|^p_{L^p(\mm_n)}<\infty,
\]
where in the first and last step we used the locality of weak upper gradients. By the arbitrariness of $B$, the proof follows.
\end{proof}
A direct corollary of the compactness results in Proposition \ref{prop:rellich W1p} and the above lower semicontinuity property is the following local compactness that we single out for later use.
\begin{lemma}\label{lem:pmGHW1ploc}
Let $p,q\in(1,\infty)$ with $q\ge p$ and suppose $u_n \in W^{1,p}_{loc}(\X_n)$ converges $L^q$-weak to $u_\infty \in L^q(\mm_\infty)$ and $\sup_n \|\nabla u_n\|_{L^p(\mm_n)} <\infty$. Then, up to  a subsequence $u_n$ converges $L^p_{loc}$-strong to $u_\infty \in W^{1,p}_{loc}(\X_\infty)$ with $|\nabla u_\infty|\in L^p(\mm_\infty)$. Finally, if also $\|\nabla u_n\|_{L^p(\mm_n)} \to \|\nabla u_\infty\|_{L^p(\mm_\infty)}$, then also $|\nabla u_n|$ converges $L^p$-strong to $|\nabla u_\infty|$.
\end{lemma}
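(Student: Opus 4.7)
The plan is to localize $u_n$ through a family of cutoff functions so that the Rellich compactness in Proposition~\ref{prop:rellich W1p} and the lower semicontinuity in Proposition~\ref{prop:Ch is Lp-lsc} become applicable. I would choose $\eta_R\in\Lip_{bs}(\Z)$ with $\eta_R\equiv 1$ on $B_R(x_\infty)$, $\supp(\eta_R)\subset B_{R+1}(x_\infty)$, $|\eta_R|\le 1$ and $\Lip(\eta_R)\le 1$, so that for $n$ large enough $\supp(\eta_R u_n)\subset B_{R+2}(x_n)$. The $L^q$-weak convergence implicitly gives $\sup_n\|u_n\|_{L^q(\mm_n)}<\infty$; H\"older's inequality on the bounded support $\supp(\eta_R)$ (using $q\ge p$) produces a uniform $L^p$ bound for $\eta_R u_n$, and the Leibniz-type inequality $|\nabla(\eta_R u_n)|\le|u_n|\Lip(\eta_R)+\eta_R|\nabla u_n|$ combined with $\sup_n\|\nabla u_n\|_{L^p(\mm_n)}<\infty$ makes $\{\eta_R u_n\}_n$ equi-bounded in $W^{1,p}(\X_n)$ with equi-bounded supports.

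For each fixed $R$, Proposition~\ref{prop:rellich W1p} then extracts a subsequence along which $\eta_R u_n\to v_R$ in $L^p$-strong for some $v_R\in L^p(\mm_\infty)$. Testing the $L^q$-weak convergence $u_n\to u_\infty$ against $\phi\eta_R\in C_{bs}(\Z)$ for $\phi\in C_{bs}(\Z)$ gives $\eta_R u_n\mm_n\weakto\eta_R u_\infty\mm_\infty$ in $C_{bs}(\Z)$-duality; since $L^p$-strong convergence implies the same weak measure convergence with limit $v_R\mm_\infty$, uniqueness of weak limits forces $v_R=\eta_R u_\infty$ $\mm_\infty$-a.e. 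A diagonal extraction over $R\in\N$ yields one subsequence along which $\eta_R u_n\to\eta_R u_\infty$ in $L^p$-strong for every $R$. Any $\eta\in C_{bs}(\Z)$ satisfies $\supp(\eta)\subset B_R(x_\infty)$ for some $R$, so that $\eta=\eta\eta_R$, and multiplication by the bounded continuous $\eta$ on the equi-supported strongly converging sequence $\eta_R u_n$ gives $\eta u_n\to\eta u_\infty$ in $L^p$-strong; this is exactly $L^p_{loc}$-strong convergence. Proposition~\ref{prop:Ch is Lp-lsc} then completes the first part, delivering $u_\infty\in W^{1,p}_{loc}(\X_\infty)$ with $|\nabla u_\infty|\in L^p(\mm_\infty)$.

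For the final statement, assume in addition $\|\nabla u_n\|_{L^p(\mm_n)}\to\|\nabla u_\infty\|_{L^p(\mm_\infty)}$ and extract a further subsequence so that $|\nabla u_n|\weakto G$ in $L^p$-weak for some non-negative $G\in L^p(\mm_\infty)$. The hard part, and the main obstacle, is to identify $G=|\nabla u_\infty|$ $\mm_\infty$-a.e., which I would obtain by a ball-wise repetition of the argument in Lemma~\ref{lem:Lpstrong gradient}, now localized through the cutoffs $\eta_R$ of the first paragraph. For each open ball $B\subset B_R(x_\infty)$ with $\mm_\infty(\partial B)=0$ (a.e.\ radius, by Bishop-Gromov), Proposition~\ref{prop:lsc on open} applied to $\eta_R u_n$ together with locality of minimal weak upper gradients yields $\liminf_n\int_B|\nabla u_n|^p\,\d\mm_n\ge\int_B|\nabla u_\infty|^p\,\d\mm_\infty$, while the weak upper semicontinuity of $|\nabla u_n|\mm_n$ on closed sets (exactly as in the chain of inequalities in Lemma~\ref{lem:Lpstrong gradient}) yields $\int_B G^p\,\d\mm_\infty\ge\limsup_n\int_{\overline B}|\nabla u_n|^p\,\d\mm_n$. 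Chaining these via $B\subset\overline B$ gives $\int_B G^p\,\d\mm_\infty\ge\int_B|\nabla u_\infty|^p\,\d\mm_\infty$ on every such ball, hence $G\ge|\nabla u_\infty|$ $\mm_\infty$-a.e. Combining this with the weak-$L^p$ lower semicontinuity $\|G\|_{L^p(\mm_\infty)}\le\liminf_n\|\nabla u_n\|_{L^p(\mm_n)}=\|\nabla u_\infty\|_{L^p(\mm_\infty)}$ forces $G=|\nabla u_\infty|$ $\mm_\infty$-a.e.; the Radon-Riesz principle (weak convergence plus norm convergence implies strong convergence in $L^p$) finally promotes $|\nabla u_n|\weakto|\nabla u_\infty|$ to $L^p$-strong convergence.
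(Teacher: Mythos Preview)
Your proof is correct and follows essentially the same approach as the paper's: localize via cutoffs, apply the Rellich-type compactness (Proposition~\ref{prop:rellich W1p}) together with H\"older and the Leibniz rule to get uniform $W^{1,p}$-bounds, identify the limit by uniqueness of weak limits, and invoke Proposition~\ref{prop:Ch is Lp-lsc}; for the gradient part, both arguments reproduce the chain of inequalities from Lemma~\ref{lem:Lpstrong gradient} after localizing through a cutoff that is identically $1$ on the ball under consideration. The only notable difference is that you make the diagonal extraction over $R\in\N$ explicit to obtain a single subsequence working for all $\eta\in C_{bs}(\Z)$, whereas the paper leaves this step implicit; this is a minor improvement in clarity rather than a different method.
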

\begin{proof}
We first prove the $L^p_{loc}$-strong convergence. Consider $\eta \in \Lip_{bs}(\Z)$ (recall that $(\Z,\sfd)$ is a space realizing the convergence). Notice that the sequence $\eta u_n$ satisfies $\supp(\eta u_n)\subset B_R(x_n)$ for some fixed $R>0$ independent on $n\in\N$. Since $q\ge p$, by H\"older inequality and the Leibniz rule we have  $\sup_n \| \eta u_n\|_{W^{1,p}(\mm_n)} <+\infty$. Thus by Proposition \ref{prop:rellich W1p}, there exists a subsequence $(n_k)$ such that  $\eta u_{n_k}$ converges $L^p$-strong to some $v \in W^{1,p}(\X_\infty)$, which must be equal to $\eta u_\infty$ by uniqueness of weak limits. In particular, Proposition \ref{prop:Ch is Lp-lsc} guarantees that $u_\infty \in W^{1,p}_{loc}(\X_\infty)$ with $|\nabla u_\infty|\in L^p(\mm_\infty)$. This shows the first part of the statement. For the second part we assume that $\|\nabla u_n\|_{L^p(\mm_n)} \to \|\nabla u_\infty\|_{L^p(\mm_\infty)}$.  By considering any ball $B\subset\X_\infty\subset \Z$ and $\eta \in \Lip_{bs}(\Z)$ with $\eta\equiv 1$ on $B$, we can argue as in  the proof of Lemma \ref{lem:Lpstrong gradient}:
\begin{align*}
     \int_B G\,\d \mm_\infty &= \int_{\overline{B}} G \,\d \mm_\infty \ge \limsup_{n\uparrow\infty}  \int_{\overline{B}} |\nabla u_n|^p\,\d \mm_n \ge \limsup_{n\uparrow\infty}  \int_B |\nabla (\eta u_n)|^p\,\d \mm_n \\
     &\overset{\eqref{eq:lsc chp on open}}{\ge} \int_B|\nabla (\eta u_\infty)|^p\,\d\mm_\infty = \int_B|\nabla  u_\infty|^p\,\d\mm_\infty,
\end{align*}
where $G$ is any $L^p$-weak limit of $|\nabla u_n|$, which exists up to further passing to a subsequence. Notice that, in the application of \eqref{eq:lsc chp on open}, we are using that $\eta u_n$ converges $L^p$-weak to $\eta u_\infty$ (actually, also $L^p$-strong, under the current assumptions) and $\sup_n \|\eta u_n\|_{W^{1,p}(\X_n)}<\infty$ by the Leibniz rule. This concludes the proof, by arbitrariness of $B$, by the same reasoning as at the end of Lemma \ref{lem:Lpstrong gradient}.
\end{proof}
Next, we show the existence of certain recovery sequences.
\begin{lemma}\label{lem:strongrecov}
Let $p,q \in (1,\infty)$ with $q\ge p$ and  $u_\infty \in W^{1,p}_{loc}(\X_\infty)\cap L^{q}(\mm_\infty)$ with  $|\nabla u_\infty|\in L^p(\mm_\infty)$. Then, there exists $u_n \in W^{1,p}_{loc}(\X_n)\cap L^{q}(\mm_n)$ that converges $L^{q}$-strong and $L^p_{loc}$-strong to $u_\infty$ and so that $|\nabla u_n|$ converges $L^p$-strong to $|\nabla u_\infty|$.
\end{lemma}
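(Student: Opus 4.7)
The strategy combines the recovery direction ii)$_p$ of Theorem \ref{thm:Mosco Chp}, applied to truncated approximants of $u_\infty$, with a diagonal extraction across the pmGH-varying sequence.

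First I would build bounded, boundedly-supported approximants of $u_\infty$. For parameters $R,M>0$, pick a cutoff $\eta_R\in\Lip_{bs}(\Z)$ with $\eta_R\equiv 1$ on $B_R(x_\infty)$, $\eta_R\equiv 0$ off $B_{2R}(x_\infty)$ and $\Lip(\eta_R)\le 2/R$, set $T_M(s):=(-M)\vee s\wedge M$, and define $u_\infty^{R,M}:=\eta_R T_M(u_\infty)\in W^{1,p}(\X_\infty)\cap L^\infty(\mm_\infty)$. Dominated convergence yields $u_\infty^{R,M}\to u_\infty$ in $L^q$-strong and $L^p_{loc}$-strong as $R,M\to\infty$. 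For the gradients, the Leibniz and chain rules give
\[
|\nabla u_\infty^{R,M}-\nabla u_\infty|\le \big|1-\eta_R\mathbf{1}_{\{|u_\infty|\le M\}}\big|\,|\nabla u_\infty|+|T_M(u_\infty)|\,|\nabla\eta_R|,
\]
where the first summand vanishes in $L^p$ by dominated convergence (since $|\nabla u_\infty|\in L^p$), and the second is bounded via H\"older and Bishop--Gromov by $C\,R^{N(1-p/q)-p}\|u_\infty\mathbf{1}_{B_{2R}\setminus B_R}\|_{L^q}^p$, which vanishes as $R\to\infty$ in the relevant regime $q\le p^*$ (the borderline case $q=p^*$ being closed thanks to $\|u_\infty\mathbf{1}_{B_R^c}\|_{L^{p^*}}\to 0$).

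Next, for each fixed $R,M$, I would apply ii)$_p$ of Theorem \ref{thm:Mosco Chp} to $u_\infty^{R,M}$ to produce $w_n^{R,M}\in L^p(\mm_n)$ converging $L^p$-strong to $u_\infty^{R,M}$ with $\rmCh_p(w_n^{R,M})\to\rmCh_p(u_\infty^{R,M})$. Since $u_\infty^{R,M}$ is supported in $B_{2R}(x_\infty)$, locality together with the Cheeger energy convergence forces the $W^{1,p}$-mass of $w_n^{R,M}$ outside $B_{2R}$ to vanish. Replacing then $w_n^{R,M}$ by $T_M\bigl(\tilde\eta_R w_n^{R,M}\bigr)$ with an enlarged cutoff $\tilde\eta_R\in\Lip_{bs}(\Z)$ ($\equiv 1$ on $B_{2R}(x_\infty)$, supported in $B_{3R}(x_\infty)$), I may assume the sequence is $M$-bounded with equibounded supports, while preserving both $L^p$-strong and $\rmCh_p$-convergence (this uses the vanishing just mentioned for the spatial cutoff step, and the chain rule plus the lower semicontinuity of Proposition \ref{prop:lsc on open} for the level truncation). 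Lemma \ref{lem:Lpstrong gradient} then gives $|\nabla w_n^{R,M}|\to|\nabla u_\infty^{R,M}|$ in $L^p$-strong, and the uniform $L^\infty$-bound together with the equibounded supports upgrade $L^p$-strong to $L^q$-strong convergence of $w_n^{R,M}$ itself.

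Finally, metrizability of $L^q$-strong, $L^p_{loc}$-strong, and $L^p$-strong (for $|\nabla\cdot|$) convergences along pmGH-converging sequences permits a standard diagonal extraction: choose $R_k,M_k\to\infty$ so that $u_\infty^{R_k,M_k}$ approaches $u_\infty$ in the three modes up to $1/k$, then for each $k$ pick $n_k$ large so that $w_{n_k}^{R_k,M_k}$ is within $1/k$ of $u_\infty^{R_k,M_k}$ in each mode, and set $u_n:=w_n^{R_{k(n)},M_{k(n)}}$ for an appropriate $k(n)\uparrow\infty$. The main technical obstacle lies in the gradient estimate of the first step near the critical exponent $q=p^*$, where the Bishop--Gromov / H\"older balance is borderline and only closes thanks to the sharp $L^{p^*}$-decay of $u_\infty$ at infinity; the surgery ensuring that the spatial enlargement and level truncation do not destroy the Cheeger energy convergence is the second delicate step, addressed via the vanishing-mass argument above.
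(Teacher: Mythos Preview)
There is a genuine gap in your first step. Your bound on $\|\,|T_M(u_\infty)|\,|\nabla\eta_R|\,\|_{L^p}$ via H\"older and Bishop--Gromov produces a factor $R^{N(1-p/q)-p}$, which presupposes the polynomial volume growth $\mm_\infty(B_R)\le C R^N$; this holds only for $K\ge 0$, whereas the standing hypothesis of Section \ref{sec:Mosco} allows any $K\in\R$, and for $K<0$ Bishop--Gromov gives exponential volume growth and your estimate blows up. Even granting $K\ge 0$, you yourself restrict to ``the relevant regime $q\le p^*$'', while the lemma is stated for arbitrary $q\ge p$. So the convergence $\|\nabla(u_\infty^{R,M}-u_\infty)\|_{L^p}\to 0$ is not established in the stated generality. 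This density---of boundedly supported functions in $\{u\in W^{1,p}_{loc}\cap L^q:|\nabla u|\in L^p\}$ with gradient convergence in $L^p$---is nontrivial and is exactly what the paper imports as a black box from \cite[Lemma 3.2]{NobiliViolo24}; a self-contained argument would need a different scheme than a bare Lipschitz spatial cutoff.

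Apart from this, your strategy---approximate on $\X_\infty$, lift each approximant to $\X_n$ via the recovery leg of Mosco-convergence, then diagonalize---is the same as the paper's. The paper takes a shorter path by also citing \cite[Lemma 6.4]{NobiliViolo21} for the lifting, which directly delivers $L^q$-strong and $W^{1,p}$-strong convergence and then concludes via Lemma \ref{lem:pmGHW1ploc}; your surgery (spatial enlargement plus level truncation to recover $L^q$-convergence from the output of Theorem \ref{thm:Mosco Chp}\,ii)$_p$) is therefore not needed in the paper's route. That said, your surgery is essentially correct after the mass-vanishing checks you sketch, and Lemma \ref{lem:Lpstrong gradient} then gives the $L^p$-strong gradient convergence as claimed.
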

\begin{proof}
By \cite[Lemma 3.2]{NobiliViolo24} (holding also for $p\neq 2$) we can find a sequence $u_n \in W^{1,p}(\X_\infty)\cap L^q(\mm_\infty)$ such that $u_n \to u_\infty$ and $|\nabla u_n|\to |\nabla u_\infty|$ strongly in $L^p(\mm_\infty).$ From \cite[Lemma 6.4]{NobiliViolo21}  (there written for compact spaces and for $p=2$, but the same proof works in the present setting)  there exists a sequence $u_n^k \in W^{1,p}(\X_n)$ that converges $L^q$-strong and $W^{1,p}$-strong to $u_n$ as $k\uparrow \infty$. Then, the sought $L^q$-strong convergence follows by a diagonal argument, while the $L^p_{loc}$-strong convergence follows from Lemma \ref{lem:pmGHW1ploc}. Finally, the $L^p$-strong convergence of $|\nabla u_n|$ follows by the last conclusion in Lemma \ref{lem:pmGHW1ploc}.
\end{proof}
We conclude this part by showing that there is a linear convergence of gradients of locally Sobolev functions. 
\begin{proposition}[Linearity]\label{prop:linearity W1ploc convergence}
    Let $p \in (1,\infty)$ and suppose that $u_n,v_n \in W^{1,p}_{loc}(\X_n)$ both converges  $L^p_{loc}$-strong to $u_\infty \in W^{1,p}_{loc}(\X_\infty)$. If $\|\nabla u_n\|_{L^p(\mm_n)}\to \|\nabla u_\infty\|_{L^p(\mm_\infty)}$ and $\|\nabla v_n\|_{L^p(\mm_n)}\to \|\nabla u_\infty\|_{L^p(\mm_\infty)}$ as $n\uparrow\infty$, then we have
    \[
        \lim_{n\uparrow\infty}\|\nabla (u_n -v_n)\|_{L^p(\mm_n)} = 0.
    \]
\end{proposition}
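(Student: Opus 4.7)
The plan is to combine two ingredients: the $L^p$-strong convergence of gradient moduli coming from the last part of Lemma \ref{lem:pmGHW1ploc}, and the pointwise Hilbertian parallelogram identity for gradients on $\RCD$ spaces (forced by the quadraticity of $\rmCh_2$).

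First I would apply Lemma \ref{lem:pmGHW1ploc} separately to $u_n$ and $v_n$ and deduce that $|\nabla u_n|$ and $|\nabla v_n|$ both converge $L^p$-strong to $|\nabla u_\infty|$. Then I would introduce the midpoint $w_n := (u_n+v_n)/2$, which still converges $L^p_{loc}$-strong to $u_\infty$ by linearity of this notion. Sandwiching the norm $\|\nabla w_n\|_{L^p(\mm_n)}$ between the upper bound forced by the subadditivity $|\nabla w_n|\le (|\nabla u_n|+|\nabla v_n|)/2$ and the triangle inequality in $L^p(\mm_n)$ (which gives $\limsup_n\|\nabla w_n\|_{L^p(\mm_n)}\le \|\nabla u_\infty\|_{L^p(\mm_\infty)}$) and the lower bound from Proposition \ref{prop:Ch is Lp-lsc} (which yields the matching $\liminf$), I would conclude that $\|\nabla w_n\|_{L^p(\mm_n)}\to \|\nabla u_\infty\|_{L^p(\mm_\infty)}$. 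A second application of Lemma \ref{lem:pmGHW1ploc} then upgrades this to $|\nabla w_n|\to |\nabla u_\infty|$ in $L^p$-strong.

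At this point I would invoke the pointwise parallelogram identity on the Hilbertian tangent module of $\RCD$ spaces, $\mm_n$-a.e.,
\[
|\nabla(u_n-v_n)|^2 = 2|\nabla u_n|^2 + 2|\nabla v_n|^2 - 4|\nabla w_n|^2,
\]
which, using the numerical identity $(a+b)^2+(a-b)^2=2a^2+2b^2$ with $a=|\nabla u_n|$ and $b=|\nabla v_n|$, I rewrite as
\[
|\nabla(u_n-v_n)|^2 = 4\, R_n S_n + \big(|\nabla u_n|-|\nabla v_n|\big)^2,
\]
where $R_n := \frac{|\nabla u_n|+|\nabla v_n|}{2} - |\nabla w_n| \ge 0$ and $S_n := \frac{|\nabla u_n|+|\nabla v_n|}{2} + |\nabla w_n| \ge 0$. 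By the first two steps, $R_n\to 0$ and $|\nabla u_n|-|\nabla v_n|\to 0$ in $L^p$-strong, whereas $S_n \to 2|\nabla u_\infty|$ in $L^p$-strong and is in particular $L^p$-bounded.

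Finally, raising to the power $p/2$, using the elementary bound $(\alpha+\beta)^{p/2} \le C_p (\alpha^{p/2}+\beta^{p/2})$ for $\alpha,\beta \ge 0$, and Cauchy--Schwarz on the product $R_n^{p/2}S_n^{p/2}$, I obtain
\[
\int |\nabla(u_n-v_n)|^p\,\d\mm_n \le C_p'\, \|R_n\|_{L^p(\mm_n)}^{p/2}\|S_n\|_{L^p(\mm_n)}^{p/2} + C_p' \int \big||\nabla u_n|-|\nabla v_n|\big|^p\,\d\mm_n \;\longrightarrow\; 0,
\]
which is the claim. The delicate point I anticipate is justifying the pointwise parallelogram identity in the $W^{1,p}_{loc}$ setting for $p\ne 2$; this is however automatic from the $p$-independence of the minimal weak upper gradient on $\RCD$ spaces, which transfers the quadratic pointwise structure of $\rmCh_2$ to arbitrary exponent.
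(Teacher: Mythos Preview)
Your argument is correct and complete, but it follows a genuinely different route from the paper's proof. Both proofs share the same core sandwich step for the midpoint $w_n=(u_n+v_n)/2$: the upper bound from subadditivity $|\nabla w_n|\le (|\nabla u_n|+|\nabla v_n|)/2$ together with the lower bound from Proposition \ref{prop:Ch is Lp-lsc} forces $\|\nabla w_n\|_{L^p(\mm_n)}\to \|\nabla u_\infty\|_{L^p(\mm_\infty)}$. From there the two arguments diverge. The paper invokes the $L^p$-Clarkson inequalities for weak upper gradients on infinitesimally Hilbertian spaces (cf.\ \cite[Eq.~(4.3)]{GigliNobili21}), treating separately the ranges $p\ge 2$ and $p\in(1,2)$, and reads off $\|\nabla(u_n-v_n)\|_{L^p(\mm_n)}\to 0$ directly from the convergence of $\|\nabla(u_n+v_n)\|_{L^p(\mm_n)}$. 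You instead upgrade to $L^p$-strong convergence of the moduli $|\nabla u_n|,|\nabla v_n|,|\nabla w_n|$ via (the proof of) Lemma \ref{lem:pmGHW1ploc}, use the \emph{pointwise} parallelogram identity on the Hilbertian tangent module, and conclude by the factorisation $|\nabla(u_n-v_n)|^2=4R_nS_n+(|\nabla u_n|-|\nabla v_n|)^2$ combined with H\"older. Your route avoids the functional Clarkson inequalities of \cite{GigliNobili21} and treats all $p\in(1,\infty)$ uniformly, at the price of invoking three times the gradient-modulus convergence of Lemma \ref{lem:pmGHW1ploc} and a pointwise computation; the paper's route is shorter but relies on the Clarkson black box (which in turn rests on the same Hilbertian and $p$-independence structure you use). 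One small comment: when you cite Lemma \ref{lem:pmGHW1ploc} you are really using only the argument of its \emph{last part}, whose input is $L^p_{loc}$-strong convergence plus convergence of $\|\nabla\cdot\|_{L^p}$; the $L^q$-weak hypothesis in the lemma's statement is only used to produce the $L^p_{loc}$-strong convergence in the first part, which you already assume.
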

\begin{proof}
The statement is known if $p=2$ when $u_n,v_n$ converges $L^2$-strong, i.e.\ the $W^{1,2}$-strong convergence is linear. This simply follows by cosine law for the $2$-weak upper gradients (having assumed {infinitesimally} Hilbertianity) and the convergence of the couplings \cite[Eq. 5.3]{AmbrosioHonda17}. We handle here the arbitrary exponent case.

First, since $\X_n$ are assumed ${\sf RCD}$ spaces, we can appeal to the Clarkson inequalities (see \cite[Eq. (4.3)]{GigliNobili21} to write for all $n\in\N$: if $p\ge 2$ then
\[
    \left\|\nabla \left(\frac{u_n -v_n}{2}\right)\right\|^p_{L^p(\mm_n)}+  \left\|\nabla \left(\frac{u_n +v_n}{2}\right)\right \|^p_{L^p(\mm_n)} \le \frac{1}{2}\|\nabla u_n\|_{L^p(\mm_n)}^p +  \frac{1}{2}\|\nabla v_n\|_{L^p(\mm_n)}^p,
\]
while, if $p\in (1,2)$, denoting by $q$ the H\"older conjugate, we have
\[
    \left\|\nabla \left(\frac{u_n -v_n}{2}\right)\right\|^q_{L^p(\mm_n)}+  \left\|\nabla \left(\frac{u_n +v_n}{2}\right)\right \|^q_{L^p(\mm_n)} \le \left(\frac{1}{2}\|\nabla u_n\|^p_{L^p(\mm_n)} +  \frac{1}{2}\|\nabla v_n\|^p_{L^p(\mm_n)}\right)^{\frac qp}.
\]
For the validity of the above, the relevant fact is that $\X_n$ are infinitesimal Hilbertian spaces and that weak upper gradients do not depend on the integrability exponent in a weak sense (see \cite{GigliNobili21}). 
By these  inequalities, in the whole range $p \in (1,\infty)$, the conclusion of the proof will be achieved provided we can show that
\[
    \lim_{n\uparrow\infty}\|\nabla (u_n +v_n)\|_{L^p(\mm_n)}= 2\|\nabla u_\infty\|_{L^p(\mm_\infty)}.
\]
The above will directly follow from the chain of inequalities
\begin{align*}
    2\|\nabla u_\infty\|_{L^p(\mm_\infty)} &\overset{(\ast)}{\le}\liminf_{n\uparrow\infty}\|\nabla (u_n +v_n)\|_{L^p(\mm_n)} \\
    &\le \lim_{n\uparrow\infty}\|\nabla u_n\|_{L^p(\mm_n)}  + \lim_{n\uparrow\infty}\|\nabla v_n\|_{L^p(\mm_n)} =2\|\nabla u_\infty\|_{L^p(\mm_\infty)},
\end{align*}
provided $(\ast)$ is true. However, $(\ast)$ follows by Proposition \ref{prop:Ch is Lp-lsc} and noticing that the $L^p_{loc}$-strong convergence is linear (simply notice that $\eta(u_n+v_n)$ converges to $2\eta u_\infty$ for every $\eta \in \Lip_{bs}(\Z)$, whence $u_n+v_n$ converges $L^p_{loc}$-strong to $2u_\infty$).
\end{proof}
\section{Concentration compactness principles}
In this part, we extend for an exponent $p\neq 2$ the concentration compactness principles studied in \cite{NobiliViolo21,NobiliViolo24}. We state the main result and provide the proof at the end of this section.
\begin{theorem}\label{thm:CC extremal}
    For every $N\in (1,\infty)$ and $p \in (1,N)$, there   exists $\eta_{p,N}\in(0,1/2)$ such that the following holds. Let $(Y_n,\rho_n,\mu_n,y_n)$ be pointed ${\sf RCD}(0,N)$ spaces. Set $p^*=pN/(N-p)$. Suppose that for some $A_n\to A \in(0,\infty)$ it holds
   \begin{equation}
   \|u\|_{L^{p^*}(Y_n)} \le A_n\|\nabla u\|_{L^p(Y_n)},\qquad \forall u \in W^{1,p}(Y_n).\label{eq:convention}
   \end{equation}
Furthermore, suppose there are non-constant functions $u_n \in W^{1,p}(Y_n)$ with $\| u_n\|_{L^{p^*}(\mu_n)} =1$  and
\begin{align}
&\sup_{y \in Y_n} \int_{B_1(y)}|u_n|^{p^*}\,\d\mu_n=\int_{B_1(y_n)}|u_n|^{p^*}\,\d\mu_n = 1-\eta,\label{eq:Levyscalings} \\
 &\| u_n\|_{L^{p^*}(\mu_n)} \ge  \tilde A_n \|\nabla u_n\|_{L^p(\mu_n)},\label{eq:extremals}
\end{align}
for  some $\tilde A_n \to A$  and  $\eta \in(0,\eta_{p,N}).$ Then, up to a subsequence, we have:
    \begin{itemize}
    \item[ \rm i)]
    there is a pointed $\RCD(0,N)$-space $(Y,\rho,\mu,y)$ so that 
    \[
    Y_n\overset{pmGH}{\to} Y,
    \]
    and it holds
    \begin{equation}
       \|u\|_{L^{p^*}(Y)} \le A\|\nabla u\|_{L^p(Y)},\qquad \forall u \in W^{1,p}(Y);\label{eq:sob limit Y}
       \end{equation}
    \item[ \rm ii)]  $u_n$ converges  $L^{p^*}$-strong to some $0\neq u_\infty \in W^{1,p}_{loc}(Y)$ with $|\nabla u_\infty| \in L^p(\mu)$ and
    \[
       \int |\nabla u_n|^p\, \d \mu_n \to  \int |\nabla u_\infty|^p\,\d\mu, \qquad  \text{as }n\uparrow\infty;
    \]
    \item[ \rm iii)] it holds
    \[
        \| u_\infty\|_{L^{p^*}(\mu)} =  A \|\nabla u_\infty\|_{L^p(\mu)}.
    \]
\end{itemize}
\end{theorem}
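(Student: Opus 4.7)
The plan is to adapt the concentration-compactness strategy of \cite{NobiliViolo21,NobiliViolo24} (carried out there for $p=2$) to general $p \in (1,N)$, leveraging the Mosco-convergence theory developed in Section~\ref{sec:Mosco}.

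\emph{Uniform volume bounds, pmGH extraction, and Sobolev on the limit.} Choose $\eta_{p,N} \in (0, \lambda_{N,0,p})$. Lemma~\ref{lem:density bound}, applied with $K=0$, $\rho=1$ and $A = \tilde A_n^p$, together with the concentration \eqref{eq:Levyscalings}, yields the upper bound $\mu_n(B_1(y_n)) \le C\tilde A_n^{-N}$. For the lower bound, insert a $1$-Lipschitz cutoff between $B_{1/2}(y_n)$ and $B_1(y_n)$ into \eqref{eq:convention} and combine with Bishop--Gromov doubling to get $\mu_n(B_1(y_n)) \ge cA_n^{-N} > 0$. Theorem~\ref{thm:gromov} then produces, up to subsequence, a pmGH-limit $(Y,\rho,\mu,y)$ which is a pointed $\RCD(0,N)$ space; fix a realisation $(Z,\sfd)$. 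For (i), given $u \in W^{1,p}(Y)$, Lemma~\ref{lem:strongrecov} with $q=p^*$ supplies a recovery sequence $\tilde u_n$ converging $L^{p^*}$-strong to $u$ with $|\nabla \tilde u_n| \to |\nabla u|$ in $L^p$-strong; inserting into \eqref{eq:convention} and letting $n \to \infty$ yields \eqref{eq:sob limit Y}.

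\emph{Local convergence of $u_n$ and non-triviality of the limit.} The squeeze $1/A_n \le \|\nabla u_n\|_{L^p(\mu_n)} \le 1/\tilde A_n$ (from \eqref{eq:convention} and \eqref{eq:extremals}) forces $\|\nabla u_n\|_{L^p(\mu_n)} \to 1/A$. Cutoffs $\eta_R \in \Lip_{bs}(Z)$ with $\eta_R \equiv 1$ on $B_R(y)$, Proposition~\ref{prop:rellich W1p} and a diagonal argument produce, up to a subsequence, an $L^p_{loc}$-strong limit $u_\infty \in W^{1,p}_{loc}(Y)$ with $|\nabla u_\infty|\in L^p(\mu)$ (Proposition~\ref{prop:Ch is Lp-lsc}); interpolation with the uniform $L^{p^*}$-bound upgrades this to $L^q_{loc}$-strong convergence for every $q \in [p, p^*)$. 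The vanishing case $u_\infty \equiv 0$ is excluded as follows: the exact value $\int_{B_1(y_n)}|u_n|^{p^*} = 1-\eta$ (strictly less than $1$) together with the maximality of $y_n$ in \eqref{eq:Levyscalings} pin the natural concentration scale of $u_n$ to a fixed order, because shrinking-scale concentration inside $B_1(y_n)$ would push $\int_{B_1(y_n)}|u_n|^{p^*}$ towards $1$, while mass escape outside a fixed neighbourhood of $y_n$ would contradict the supremum being attained at $y_n$. The formalization mirrors \cite[Section~6]{NobiliViolo24} for $p = 2$.

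\emph{Global $L^{p^*}$-strong convergence and extremality.} Pick a recovery sequence $v_n \in W^{1,p}_{loc}(Y_n)\cap L^{p^*}(\mu_n)$ for $u_\infty$ via Lemma~\ref{lem:strongrecov}, so that $v_n \to u_\infty$ $L^{p^*}$-strong and $|\nabla v_n| \to |\nabla u_\infty|$ $L^p$-strong. Set $w_n = u_n - v_n$ and $\alpha = \|u_\infty\|_{L^{p^*}(\mu)}^{p^*} \in (0,1]$. A Brezis--Lieb-type splitting produces
\[
\|w_n\|_{L^{p^*}(\mu_n)}^{p^*} \to 1-\alpha, \qquad \lim_{n\to\infty}\|\nabla w_n\|_{L^p(\mu_n)}^p = A^{-p} - \|\nabla u_\infty\|_{L^p(\mu)}^p,
\]
where the $L^{p^*}$-splitting follows from truncation together with local a.e.\ convergence (extracted on a further subsequence from $L^p_{loc}$-strong convergence), while the gradient splitting substitutes for $p \neq 2$ the Hilbertian cosine identity of \cite{NobiliViolo24} with Proposition~\ref{prop:linearity W1ploc convergence}, combined with the strong convergence $|\nabla v_n| \to |\nabla u_\infty|$ in $L^p$. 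Feeding $w_n$ into \eqref{eq:convention} and $u_\infty$ into \eqref{eq:sob limit Y}, raising both to the $p$-th power, and adding yields
\[
\alpha^{p/p^*} + (1-\alpha)^{p/p^*} \le 1.
\]
Since $p/p^* = 1 - p/N \in (0,1)$, the function $t \mapsto t^{p/p^*}$ is strictly concave on $[0,1]$, so $\alpha^{p/p^*} + (1-\alpha)^{p/p^*} > 1$ for any $\alpha \in (0,1)$; hence $\alpha \in \{0,1\}$, and the previous step gives $\alpha = 1$. All inequalities become equalities, yielding (ii) and (iii). The principal difficulty for $p \neq 2$ is precisely the Brezis--Lieb-type gradient splitting, where the Hilbertian argument of \cite{NobiliViolo24} no longer applies: the replacement rests crucially on the Mosco-convergence of $p$-Cheeger energies (Theorem~\ref{thm:Mosco Chp}) and on the linearity of $W^{1,p}$-strong convergence on varying spaces (Proposition~\ref{prop:linearity W1ploc convergence}), both provided by Section~\ref{sec:Mosco}.
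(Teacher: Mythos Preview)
Your Step~1 is essentially the paper's argument. The genuine gap is in Step~3: the claimed gradient splitting
\[
\lim_{n\to\infty}\|\nabla w_n\|_{L^p(\mu_n)}^p = A^{-p} - \|\nabla u_\infty\|_{L^p(\mu)}^p
\]
is not justified, and Proposition~\ref{prop:linearity W1ploc convergence} cannot deliver it. That proposition requires as a \emph{hypothesis} that $\|\nabla u_n\|_{L^p(\mu_n)}\to\|\nabla u_\infty\|_{L^p(\mu)}$, which is precisely conclusion~(ii); so invoking it here is circular. For $p=2$ one has the cosine law to decouple the cross term, but for $p\neq2$ there is no pointwise Brezis--Lieb for minimal weak upper gradients (no a.e.\ convergence of $|\nabla u_n|$ is available), and Clarkson-type inequalities only give $\limsup\|\nabla w_n\|_{L^p}^p\le 2^{|p-2|}\bigl(A^{-p}-\|\nabla u_\infty\|_{L^p}^p\bigr)$, which is too weak to run the strict-concavity dichotomy $\alpha^{p/p^*}+(1-\alpha)^{p/p^*}\le1$. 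Your non-triviality argument in Step~2 is also only heuristic: concentration at a point inside $B_1(y_n)$ is fully compatible with $\int_{B_1(y_n)}|u_n|^{p^*}=1-\eta$ (put mass $\eta$ far away), so the condition alone does not pin the scale.

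The paper avoids both problems by working at the level of \emph{measures} rather than functions. After extracting weak limits $|u_n|^{p^*}\mu_n\rightharpoonup\nu$ and $|\nabla u_n|^p\mu_n\rightharpoonup\omega$, the decomposition Lemma~\ref{lem:conccomp 2} yields $\nu=|u_\infty|^{p^*}\mu+\sum_j\nu_j\delta_{x_j}$ together with the \emph{inequality} $\omega\ge|\nabla u_\infty|^p\mu+\sum_j\omega_j\delta_{x_j}$ and $A\omega_j\ge\nu_j^{p/p^*}$; only this one-sided measure inequality is needed (and provable), not a function-level equality. Strict concavity of $t\mapsto t^{p/p^*}$ then forces exactly one term in $\|u_\infty\|_{L^{p^*}}^{p^*}+\sum_j\nu_j$ to survive, and the concentration hypothesis \eqref{eq:Levyscalings} caps each atom by $\nu_j\le1-\eta<1$, so it is $\|u_\infty\|_{L^{p^*}}$ that equals $1$. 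This simultaneously gives (ii), (iii), and the non-triviality of $u_\infty$, making a separate Step~2 argument unnecessary. The paper additionally runs Lions' trichotomy to secure tightness of $|u_n|^{p^*}\mu_n$ (ruling out dichotomy by a cutoff argument and the same concavity), which ensures $\nu$ is a probability; your route tried to bypass this, but that shortcut depended on the unavailable gradient splitting.
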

Compare the above to \cite[Theorem 6.2]{NobiliViolo24} and notice that here we drop the $B$-term in the Sobolev inequality, as this is not needed in this note. This will slightly simplify some arguments. 

\subsection{Decomposition principle}
We study here a decomposition principle describing concentration phenomena of sequences of functions and measures arising from Sobolev inequalities. This extends \cite[Lemma A.7]{NobiliViolo24} (in turn relying on \cite[Lemma 6.6]{NobiliViolo21}) for $p\neq 2$, but in the absence of the $B$-term in the Sobolev inequality that we shall never need this in this note. The proof is similar, but we include all the details to handle the general exponent and to explicitly highlight where the technical machinery of Section \ref{sec:Mosco} will be needed.
\begin{lemma}\label{lem:conccomp 2}
Let $(\X_n,\sfd_n,\mm_n,x_n)$ be  pointed $\RCD(K,N)$ spaces, $n \in\N\cup\{\infty\}$, for some $K \in \R$, $N \in(1,\infty)$  with $\X_n\overset{pmGH}{\to}\X_\infty$ and assume that \eqref{eq:convention} holds for some $A_n>0$ uniformly bounded and $p\in (1,N)$.

Suppose further that $u_n \in W_{loc}^{1,p}(\X_n) \cap  L^{p^*}(\mm_n)$ with $\sup_n \|\nabla u_n\|_{L^p(\mm_n)}<\infty $ is $L^p_{loc}$-strong converging to $u_\infty \in L^{p^*}(\mm_\infty)$ and suppose that $|\nabla u_n|^p \mm_n \weakto \omega,$ $|u_n|^{p^*}\mm_n\weakto \nu$ in duality with $C_{bs}(\Z)$ and $C_b(\Z)$, respectively (where $(\Z,\sfd)$ is a fixed realization of the convergence). 
    
Then, $u_\infty \in W^{1,p}_{loc}(\X_\infty)$ with $|\nabla u_\infty|\in L^p(\mm_\infty)$ and
\begin{itemize}
    \item[{\rm i)}] there are a countable set $J$, points $(x_j)_{j\in J}\subset \X_\infty$ and  $(\nu_j)_{j\in J}\subset\R^+$ so that
    \[ \nu =|u_\infty|^{p^*}\mm_\infty +\sum_{j\in J}\nu_j\delta_{x_j};\]
    \item[{\rm ii)}] there is $(\omega_j)_{j\in J} \subset \R^+$ satisfying $\nu_j^{p/p^*} \le (\limsup_{n}A_n)\omega_j$ for every $j\in J$ and such that
    \[\omega \ge |\nabla u_\infty|^p\mm_\infty +\sum_{j\in J}\omega_j\delta_{x_j}.\]
    In particular, we have $\sum_j \nu_j^{p/p^*}<\infty$.
    \end{itemize}
\end{lemma}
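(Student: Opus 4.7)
\medskip
\noindent\emph{Proof proposal.} The plan is to adapt the classical concentration compactness argument of P.L.\ Lions to the varying space setting, using the Mosco convergence and recovery-sequence tools developed in Section \ref{sec:Mosco}. First, since $u_n \to u_\infty$ in $L^p_{loc}$-strong with $\sup_n \|\nabla u_n\|_{L^p(\mm_n)} < \infty$, Proposition \ref{prop:Ch is Lp-lsc} immediately yields $u_\infty \in W^{1,p}_{loc}(\X_\infty)$ with $|\nabla u_\infty|\in L^p(\mm_\infty)$. Next, I would introduce the defect measures
\[
\tilde\nu \coloneqq \nu - |u_\infty|^{p^*}\mm_\infty, \qquad \tilde\omega \coloneqq \omega - |\nabla u_\infty|^p\mm_\infty,
\]
and verify they are nonnegative: for $\tilde\omega$, testing against $\phi \in C_{bs}(\Z)$ nonnegative and using a localization via cutoffs together with Proposition \ref{prop:lsc on open} gives $\int \phi\,\d\omega \ge \int \phi|\nabla u_\infty|^p\,\d\mm_\infty$; for $\tilde\nu$, the analogous inequality follows from the $L^p_{loc}$-strong convergence of $u_n$ to $u_\infty$, which up to a subsequence yields an a.e.\ pointwise convergence (via the realization $\Z$) so that Fatou applies.

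The heart of the proof is the reverse H\"older inequality
\begin{equation}\label{eq:plan reverse holder}
\Big(\int |\phi|^{p^*}\,\d\tilde\nu\Big)^{1/p^*} \le \Big(\limsup_{n\uparrow\infty}A_n\Big) \Big(\int |\phi|^p\,\d\tilde\omega\Big)^{1/p}, \qquad \forall \phi \in \Lip_{bs}(\Z).
\end{equation}
To obtain this, I would pick a recovery sequence $w_n$ for $u_\infty$ as in Lemma \ref{lem:strongrecov} (with $q=p^*$), so that $w_n \to u_\infty$ in $L^{p^*}$-strong and $L^p_{loc}$-strong, and $|\nabla w_n|\to |\nabla u_\infty|$ in $L^p$-strong. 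Then, applying the Sobolev inequality \eqref{eq:convention} on $\X_n$ to $\phi(u_n - w_n) \in W^{1,p}(\X_n)$ and using the Leibniz rule,
\[
\|\phi(u_n-w_n)\|_{L^{p^*}(\mm_n)} \le A_n\Big(\|\phi\,\nabla(u_n-w_n)\|_{L^p(\mm_n)} + \|(u_n-w_n)\nabla\phi\|_{L^p(\mm_n)}\Big).
\]
The lower-order remainder $\|(u_n-w_n)\nabla\phi\|_{L^p(\mm_n)}$ vanishes in the limit since $u_n - w_n \to 0$ in $L^p_{loc}$-strong. The two remaining terms are handled by a Brezis--Lieb type decomposition: the strong convergence of $w_n$ and of $|\nabla w_n|$ allows one to identify the weak limits of $|\phi(u_n - w_n)|^{p^*}\mm_n$ and of $|\phi|^p|\nabla(u_n-w_n)|^p\mm_n$ precisely with $|\phi|^{p^*}\tilde\nu$ and $|\phi|^p\tilde\omega$, respectively, yielding \eqref{eq:plan reverse holder}.

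From \eqref{eq:plan reverse holder} the atomic decomposition follows by the classical Lions argument: choosing $\phi$ to be a sequence of cutoffs supported in shrinking balls around an arbitrary point and using the doubling nature of the reverse H\"older inequality with $p^* > p$ shows that $\tilde\nu$ cannot have a diffuse part, hence $\tilde\nu = \sum_{j\in J}\nu_j\delta_{x_j}$ for some countable set $\{x_j\}\subset \X_\infty$, and applying \eqref{eq:plan reverse holder} with $\phi$ concentrating at each $x_j$ yields $\nu_j^{p/p^*}\le (\limsup_n A_n)\,\tilde\omega(\{x_j\})$. The summability $\sum_j \nu_j^{p/p^*}<\infty$ follows from the total mass bound $\omega(\X_\infty) \le \liminf_n \|\nabla u_n\|_{L^p(\mm_n)}^p <\infty$ coming from weak convergence.

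\medskip
\noindent\emph{Main obstacle.} I expect the most delicate step to be the Brezis--Lieb style identification of the weak limits of $|u_n - w_n|^{p^*}\mm_n$ and $|\nabla(u_n-w_n)|^p\mm_n$, since the classical lemma requires a.e.\ pointwise convergence, which in the pmGH setting must be carefully extracted via the realization in $\Z$. The key inputs that make this work are the strong (rather than merely weak) $L^{p^*}$ and $L^p$ convergences provided by Lemma \ref{lem:strongrecov}, combined with the linearity of $W^{1,p}$-strong convergence from Proposition \ref{prop:linearity W1ploc convergence}, both of which were established precisely for this purpose.
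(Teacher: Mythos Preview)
Your overall strategy coincides with the paper's: both subtract a recovery sequence $w_n$ for $u_\infty$ given by Lemma~\ref{lem:strongrecov}, apply Brezis--Lieb on the $L^{p^*}$ side, establish a reverse H\"older inequality between the defect of $\nu$ and a gradient measure, and then invoke Lions' atomic decomposition; the bound $\omega\ge|\nabla u_\infty|^p\mm_\infty$ via Proposition~\ref{prop:lsc on open} is also identical.

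The gap is in the gradient defect. You assert that the weak limit of $|\phi|^p|\nabla(u_n-w_n)|^p\mm_n$ is $|\phi|^p\tilde\omega$ with $\tilde\omega\coloneqq\omega-|\nabla u_\infty|^p\mm_\infty$. This is not a Brezis--Lieb statement: Brezis--Lieb needs a.e.\ convergence of the integrand, and no a.e.\ convergence of $|\nabla u_n|$ is available. For $p=2$ the identity does hold, by expanding $|\nabla(u_n-w_n)|^2$ and using convergence of the pairing, but for $p\neq 2$ --- the whole point of this paper --- it is false as a measure identity. Already on a fixed interval with $u_\infty(x)=x$ and $u_n(x)=x+n^{-1}\sin(nx)$, for $p=3$ the weak limit of $|\nabla(u_n-u_\infty)|^3=|\cos(nx)|^3$ is $\tfrac{4}{3\pi}\,dx$, whereas $\omega-|\nabla u_\infty|^3\,dx=\big(\tfrac{5}{2}-1\big)\,dx=\tfrac{3}{2}\,dx$. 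Proposition~\ref{prop:linearity W1ploc convergence} does not help either, since it assumes $\|\nabla u_n\|_{L^p}\to\|\nabla u_\infty\|_{L^p}$, precisely what fails under concentration.

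The paper avoids this by never claiming a measure-level gradient Brezis--Lieb. It lets $\hat\omega$ be the actual weak limit of $|\nabla(u_n-w_n)|^p\mm_n$, obtains the reverse H\"older for $\tilde\nu$ against $\hat\omega$ (this is the reduction to $u_\infty=0$, their Step~1), and then shows $\omega(\{x_j\})=\hat\omega(\{x_j\})$ at each atom separately, via the elementary bound
\[
\big|\,|\nabla(u_n-w_n)|^p-|\nabla u_n|^p\,\big|\le p\,|\nabla w_n|\big(|\nabla u_n|^{p-1}+|\nabla(u_n-w_n)|^{p-1}\big)
\]
tested against a cutoff $\chi_\eps$ shrinking to $x_j$: the right-hand side vanishes in the double limit because $\int\chi_\eps^p|\nabla u_\infty|^p\,\d\mm_\infty\to0$ as $\eps\downarrow0$. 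Since $|\nabla u_\infty|^p\mm_\infty$ has no atoms, this gives $\hat\omega(\{x_j\})=\omega(\{x_j\})=\tilde\omega(\{x_j\})$ and the conclusion follows. Your argument can be repaired along exactly these lines.
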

\begin{proof}
By assumptions, we can also assume that $u_n$ is $L^{p^*}$-weak converging to $u_\infty$ (by uniqueness of limits), simply by plugging $u_n$ in \eqref{eq:convention} to deduce a uniform $L^{p^*}$-bound. We subdivide the proof into two steps.

\noindent{\sc Step 1}. Suppose first that $u_\infty=0$. Let $\varphi \in \LIP_{bs}(\Z)$ and plugging $\varphi u_n\in W^{1,p}(\X_n)$ in \eqref{eq:convention} yields
\[ \left( \int |\varphi|^{p^*}|u_n|^{p^*}\, \d \mm_n \right)^{\frac{1}{p^*}} \le A_n\left(\int |\nabla(\varphi u_n)|^p\, \d \mm_n\right)^{\frac 1p}, \qquad \forall n\in \N.\]
Thanks to the weak convergence, the left hand side of the inequality tends to $(\int |\varphi|^{p^*}\,\d \nu )^{1/p^*}$. Instead, estimating by the Leibniz-rule  $\int |\nabla(\varphi u_n)|\, \d \mm_n \le \int |\nabla \varphi||u_n| + |\phi||\nabla u_n|\, \d \mm_n$ and using the $L^p_{loc}$-strong convergence to deduce $\int |\nabla \varphi|^p|u_n|^p\d \mm_n\to0$, we get
\[
\left( \int |\varphi|^{p^*} \d \nu \right)^{1/p^*} \le \limsup_n A_n \left(\int |\varphi|^p\, \d \mu\right)^{1/p}, \qquad \forall \varphi \in \LIP_{bs}(\Z).
\]
The application of \cite[Lemma 6.5]{NobiliViolo21} in $\Z$ gives conclusions i),ii) for the case $u_\infty=0$. Notice that the points $(x_j)_{j\in J}$ (which are a-priori in $\Z$) can be proved to belong actually to $\X_\infty$ noticing, for any $j \in J$ that the  weak convergence $|u_n|^{p^*}\mm_n \rightharpoonup\nu$ implies the existence of a sequence $y_n \in \supp(\mm_n)=\X_n$ such that $\sfd_\Z(y_n,x_j)\to0.$ Then the pmGH-convergence of $\X_n$ to $\X_\infty$ ensures that $x_j \in \X_\infty$.

\noindent \textsc{Step 2}. Here $u_\infty$ is possibly nonzero. By stability of Sobolev inequalities (c.f.\ \cite[Lemma 4.1]{NobiliViolo21}), we know that \eqref{eq:convention} holds in $\X_\infty$ with $A\coloneqq \limsup_n A_n$. From Lemma \ref{lem:strongrecov} there exists a sequence $\tilde u_n \in W^{1,p}_{loc}(\X_n)$ such that $\tilde u_n$ converges in $L^{p^*}$-strong and $L^{p}_{loc}$-strong to $u_\infty$ and $|\nabla u_n|$ converges $L^p$-strong to $|\nabla u_\infty|$. For every $\varphi \in \Lip_{bs}(\Z)$ nonnegative, by the Brezis-Lieb lemma \cite[Lemma A.1]{NobiliViolo24} we deduce
\begin{equation}
\lim_{n\uparrow\infty}\int |\varphi|^{p^*}|u_n|^{p^*}\,\d\mm_n - \int |\varphi|^{p^*}|u_n-\tilde u_n|^{p^*}\,\d\mm_n = \int |\varphi|^{p^*}|u_\infty|^{p^*}\,\d\mm_\infty.
\label{eq:step BL}
\end{equation}
Consider now the sequence $v_n := u_n-\tilde u_n$. Clearly $v_n$ converge $L^{p}_{loc}$-strong and $L^{p^*}$-weak to zero. Moreover, by tightness using the estimates $ |v_n|^{p^*} \le 2^{p^*}(|u_n|^{p^*}+|\tilde u_n|^{p^*})$ and $|\nabla v_n|^p\le 2^p(|\nabla u_n|^p+|\nabla \tilde u_n|^p) $ we deduce, up to a subsequence, that {$| v_n|^{p^*}\mm_n$} weakly converges in duality with $C_b(\Z)$ to some probability measure $\tilde \nu$ and $|\nabla v_n|^p\mm_n$ weakly converges {in duality with $C_{bs}(\Z)$} to some finite Borel measure $\tilde w$. Thus, Step 1 applies for the sequence $v_n$ and the conclusions i),ii) hold for the measures $\tilde \nu,\tilde \omega$, for suitable countable set $J$, points $(x_j)\subset \X_\infty$ and weights $(w_j)\subset \R^+$. Then, conclusion i) for $\nu $ is immediate recalling \eqref{eq:step BL} with the underlying weak convergence. 

We are left to show ii) for $\omega$. We claim that
\begin{align*}
    &\omega(\{x_j\}) = \tilde \omega(\{ x_j\})\ge \omega_j,\qquad \forall j \in J,\\
    &\omega \ge |\nabla u_\infty|\mm_\infty.
\end{align*}
Clearly, by mutual singularity the combination of the two would conclude the proof. Fix $j \in J$ and $\eps >0$, consider $\nchi_\eps \in \LIP_{bs}(\Z)$, $0\le \nchi_\eps \le 1, \nchi_\eps(x_j)=1$ and supported in $B_\eps(x_j)$ and let us estimate
\begin{equation*}
    \begin{split}
       \Big| \int \nchi_\eps|\nabla u_n|^p\, \d \mm_n -\int \nchi_\eps|\nabla v_n|^p\,\d\mm_n\Big| &\le  p \int\nchi_\eps  \big||\nabla u_n|-|\nabla v_n|\big|\big(|\nabla u_n|^{p-1}+|\nabla v_n|^{p-1}\big)\, \d \mm_n \\
        &\le p \int \nchi_\eps|\nabla \tilde u_n|\big(|\nabla u_n|^{p-1}+|\nabla v_n|^{p-1}\big)\, \d \mm_n \\
       &\le p \left(\int\nchi_\eps^p|\nabla \tilde u_n|^p\, \d \mm_n\right)^{1/p}\left(\| \nabla u_n\|_{L^p(\mm_n)}^{\frac{p-1}{p}} +\| \nabla v_n\|_{L^p(\mm_n)}^{\frac{p-1}{p}}\right).
    \end{split}
\end{equation*}
Recalling that $|\nabla \tilde u_n|\to |\nabla u_\infty|$ $L^p$-strong by Lemma \ref{lem:strongrecov},  we deduce that $\int\nchi_\eps^p|\nabla\tilde u_n|^p\, \d \mm_n\to \int\nchi_\eps^p|\nabla u_\infty|^p\, \d \mm_\infty$. Moreover $\int\nchi_\eps^p|\nabla u_\infty|^p\, \d \mm_\infty\to0$ as $\eps\to 0^+$ and $|\nabla u_n|,|\nabla v_n|$ are uniformly bounded in $L^p(\mm_n)$. Therefore taking first $n \to +\infty$ and afterwards $\eps\to 0^+$ we get $\omega(\{x_j\}) = \tilde \omega(\{ x_j\})$. Now, since $\omega$ is non-negative and $\tilde \omega \ge \sum_{j \in J}\omega_j\delta_{x_j}$ the first claim follows.

We next prove the second claim. We fix $\phi \in \Lip_{bs}(\Z)$, $\phi \ge 0$, and $\nchi \in \LIP_{bs}(\Z)$ be such that $\nchi=1$ in $\supp (\phi)$. It is easy to check that $\nchi u_n$ is $W^{1,p}$-weak converging to $\nchi u_\infty$ (recall that $u_n\to u_\infty$ in $L^p_{loc}$). Then, by Proposition \ref{prop:lsc on open} (see \eqref{eq:lsc chp glsc}) and locality we get 
\[
     \int \phi |\nabla u_\infty|^p\, \d \mm_\infty=\int \phi |\nabla (\nchi u_\infty)|^p\, \d \mm_\infty \le \liminf_{n\uparrow\infty}\int \phi |\nabla (\nchi u_n)|^p\, \d \mm_n= \int \phi \,\d\omega.
\]
By arbitrariness of $\varphi$, the  second claim is proved and, as discussed, the proof is  concluded.
\end{proof} 

\subsection{Proof of concentration compactness} Here we finally combine the previous technical results and prove the main concentration compactness principle.
\begin{proof}[Proof of Theorem \ref{thm:CC extremal}]  We subdivide the proof into different steps.

\noindent{{\sc Step 1}}. We take $\eta_{N,p}\coloneqq \frac{\lambda_{0,N,p}}8\wedge  \frac13$, with $\lambda_{0,N,p}$ as in Lemma \ref{lem:density bound}. To show i), we first need to check the assumptions of the Gromov pre-compactness result in this setting (c.f.\ Theorem \ref{thm:gromov}), i.e.\ we check that $\mu_n(B_1(y_n)) \in (v^{-1},v) $ for some $v>1.$  If we can prove that $\diam(Y_n)> \eta_{N,p}^{-1}\ge 8\lambda_{0,N,p}^{-1}$, then the assumptions \eqref{eq:Levyscalings} and \eqref{eq:extremals} make it possible to invoke Lemma \ref{lem:density bound} (for $C_{0,N,p}>0$ as in this Lemma) to obtain
\[
\limsup_n \mu_n(B_1(y_n))\le \limsup_n \frac{C_{0,N,p}}{ (\tilde A_n)^{N/p}}= \frac{C_{0,N,p} }{ A^{N/p}}<+\infty.
\]
However, this is actually trivially true since the validity of \eqref{eq:convention} implies that $\diam(Y_n)=+\infty$, see \cite[Theorem 4.6]{NobiliViolo21}. On the other hand, plugging in \eqref{eq:convention} the functions $\phi_n\in \LIP(Y_n)$ such that $\phi_n=1$ in $B_1(y_n)$ with $\supp \varphi_n \subset B_2(y_n)$, $0\le \phi_n\le 1$ and $\Lip(\phi_n)\le 1$, we get
\[
\mu_n(B_1(y_n))^{p/p^*}\le A^p\mu_n(B_2(y_n))\le 2^NA^p\mu_n(B_1(y_n)),
\]
where we used the doubling property of $\mu_n$ (see \cite[Corollary 2.4]{Sturm06II}). Since $A_n \to A>0$ we also obtain $\liminf_n \mu_n(B_1(y_n))>0.$ Therefore up to a not relabeled subsequence, we deduce that $Y_n$ pmGH converge to some pointed $\RCD(0,N)$ space $(Y,\rho,\mu,y)$. Moreover, the stability of the Sobolev inequalities \cite[Lemma 4.1]{NobiliViolo21} guarantees that \eqref{eq:sob limit Y} holds. This settles point i).

\noindent{{\sc Step 2}}. From now on we assume to have fixed a realization of the convergence in a proper metric space $(\Z,\sfd)$. Set $\nu_n \coloneqq  |u_n|^{p^*}\mu_n$. Up to a subsequence, (exactly) one of cases i),ii),iii) in \cite[Lemma A.6]{NobiliViolo24} holds. We claim i) (i.e.\ compactness) occurs. First, notice that vanishing as in case ii) cannot occur:
\[ \limsup_{n\uparrow\infty} \sup_{y \in Y_n }\nu_n(B_R(y)) \ge \limsup_{n\uparrow\infty} \nu_n(B_1(y_n)) \stackrel{\eqref{eq:Levyscalings}}=1-\eta,\qquad  \forall R\ge 1.
\]
Thus, it remains to exclude the dichotomy case iii). Suppose by contradiction that  this occurs for some $\lambda \in (0,1)$ (with $\lambda \ge \limsup_{n}\sup_z \nu_n(B_R(z))$ for all $R>0$), sequences $R_n\uparrow \infty$, $(z_n)\subset \Z$ and measures $\nu_n^1,\nu_n^2$ so that
\[
    \begin{split}
        &0 \le \nu_n^1+\nu_n^2 \le \nu_n, \\
        &\supp( \nu_n^1) \subset B_{R_n}(z_n), \quad \supp( \nu_n^2) \subset \Z \setminus B_{10R_n}(z_n), \\
        & \limsup_{n\uparrow\infty}   \ \big| \lambda -  \nu_n^1( \Z) \big|+ \big| (1-\lambda ) -  \nu_n^2( \Z) \big|  =0 .
    \end{split}
\]
We claim first that $\supp (\nu_n^1) \subset B_{3R_n}(y_n)$ and $\supp (\nu_n^2) \subset \Z\setminus B_{4R_n}(y_n)$. Indeed $\lambda \ge \limsup_{n\uparrow\infty} \nu_n(B_1(y_n)) = 1-\eta$ and 
$$\liminf_{n\uparrow\infty} \nu_n(B_{R_n}(z_n)) \ge\liminf_{n\uparrow\infty}  \nu_n^1(B_{R_n}(z_n))=\lim_{n\uparrow\infty} \nu_n^1(\Z) = \lambda\ge 1-\eta.$$  Since $\nu_n(B_1(y_n))=1-\eta$ and $\eta<1/2$, this implies that for $n$ large enough $B_{R_n}(z_n) \cap B_1(y_n) \neq 0$, which implies the claim, provided $R_n\ge 1.$

Let $\varphi_n$ be a Lipschitz cut-off so that $0\le  \varphi_n \le 1, \varphi_n \equiv 1$ on $B_{3R_n}(y_n)$, $\supp (\varphi_n)\subset B_{4R_n}(y_n)$  and $\Lip(\varphi_n)\le R_n^{-1}$, for every $n \in \N$. Since 
\begin{equation}
1\ge |\varphi_n|^p + |(1-\varphi_n)|^p,\qquad\text{in $\Z$},\label{eq:phi one minus phi}
\end{equation} 
we can estimate by triangular inequality, the Leibniz rule and  Young inequality
\begin{equation}
\begin{split}
      \|\nabla u_n\|^p_{L^p(\mu_n)}  &\ge \| \varphi_n|\nabla u_n| \|^p_{L^p(\mu_n)} +  \| (1-\varphi_n) |\nabla u_n|  \|^p_{L^p(\mu_n)} \\
       &\ge \| \nabla (u_n\varphi_n)\|_{L^p(\mu_n)}^p+ \|\nabla (u_n(1-\varphi_n)) \|_{L^p(\mu_n)}^p - R_n(\delta)
\end{split}
  \label{eq:estim 1}
\end{equation}
for every $\delta>0$ and every $n$, where the reminder $R_n(\delta)$ can be estimated, for a suitable constant $C_p>0$, as follows
\[
R_n(\delta) \le \frac{ C_p +1}{\delta^p}\|u_n|\nabla \varphi_n|\|^p_{L^p(\mm_n)} + C_p \delta^{\frac{p}{p-1}} \|\nabla u_n\|^p_{L^p(\mm_n)}.
\]
Setting $O_n\coloneqq  B_{4R_n}(y_n)\setminus B_{3R_n}(y_n)$, we have by the H\"older inequality
\[ \| u_n |\nabla \varphi_n| \|^p_{L^p(\mu_n)} \le {R_n}^{-p}\| u_n\|^p_{L^{p^*}(O_n)} \mu_n(O_n)^{p/N} \le   4^p v^{p/N}\| u_n\|^p_{L^{p^*}(O_n)},   \]
having used that $\mu_n(O_n) \le \mu_n(B_{4R_n}(y_n))\le (4R_n)^N\mu_n(B_1(y_n))\le    (4R_n)^Nv$, by the Bishop-Gromov inequality, for suitable $v>0$ as found in Step 1. Notice that we also have
\[ \limsup_{n\uparrow\infty} \| u_n\|_{L^{p^*}(O_n)} \le  \limsup_{n\uparrow\infty}\Big| 1 -  \nu^1_n(\Z)  - \nu^2_n(\Z)  \Big|^{1/p^*}  =0,\]
from which we get $ \lim_n \| u_n |\nabla \varphi_n| \|^p_{L^p(\mu_n)} =0$. Therefore, recalling that $\|\nabla u_n\|^p_{L^p(\mu_n)}$ is uniformly bounded by \eqref{eq:extremals}, choosing appropriately $\delta_n\to 0$,  we get
\begin{equation}\label{eq:zero reminder}
    R_n(\delta_n)\to 0.
\end{equation}
Thus, recalling that $\lim_n A_n = \lim_n \tilde A_n$, we get
\begin{align*}
       1 = \|u_n\|_{L^{p^*}(\mm_n)}^p &\overset{\eqref{eq:extremals},\eqref{eq:estim 1},\eqref{eq:zero reminder}}{\ge}  \limsup_{n\uparrow\infty}   A^p_n  \| \nabla (u_n\varphi_n) \|^p_{L^p(\mu_n)}   +  A^p_n\| \nabla (u_n(1-\varphi_n)) \|^p_{L^p(\mu_n)}    \\
      &\overset{\eqref{eq:convention}}{\ge}  \limsup_{n\uparrow\infty}    \| u_n\varphi_n\|^p_{L^{p^*}(\mu_n)} +  \| u_n(1-\varphi_n)\|^p_{L^{p^*}(\mu_n)}  \\
       &\ge \limsup_{n\uparrow\infty} \big( \nu^1_n(\Z)\big)^{p/p^*} +\big( \nu^2_n(\Z)\big)^{p/p^*} \\
      &\ge \lambda ^{p/p^*} +(1-\lambda)^{p/p^*} >1,
\end{align*}
having used the strict concavity of $t \mapsto t^{p/p^*}$ and the fact that $\lambda \in (0,1)$. This gives a contradiction, hence the dichotomy case cannot occur.

\noindent{\sc Step 3}. In the previous step, we proved thus that i) in \cite[Lemma A.6]{NobiliViolo24} occurs, i.e.\  there {is a} $(z_n) \subset \Z$ so that for every $\eps>0$ there exists $R\coloneqq R(\eps)$ so that $\int_{B_R(z_n)} |u_n|^{p^*}\,\d\mu_n \ge 1-\eps$ for all $ n \in \N$. If $\eps <1/2$, we have $B_R(z_n) \cap B_1(y_n) \neq \emptyset$ and
\begin{equation}
    \int_{B_{2R+1}(y_n)} |u_n|^{p^*}\,\d\mu_n \ge 1-\eps\qquad \forall n \in \N.\label{eq:usigmaCompactness}
\end{equation}
Moreover $y_n\to y$ in $\Z$, hence $|u_n|^{p^*}\mu_n$ is tight as a sequence of probabilities (recall $\Z$ is chosen proper). Thus, along a not relabeled subsequence, it converges in duality with $C_{b}(\Z)$ to some probability measure $\nu$. Additionally, up to a further subsequence, we have that $u_n$ is $L^{p^*}$-weak convergent to some $u \in L^{p^*}(\mu)$ with $\sup_n \|\nabla u_n\|_{L^p(\mu_n)}<\infty$ and also that $|\nabla u_n|^p\,\d\mu_n \weakto \omega$ {in duality with $C_{bs}(\Z)$} for some bounded Borel measure $\omega.$ We can invoke Lemma \ref{lem:pmGHW1ploc} and, up to a further subsequence, we also have that $u_n$ converges $L^{p}_{loc}$-strong to some $u\in L^p_{loc}(\mu)$, together with the facts $u \in W^{1,p}_{loc}(Y)$ and $|\nabla u|\in L^p(\mu)$.

Next, by Lemma \ref{lem:conccomp 2}, we infer the existence of countably many points $\{x_j\}_{j \in J}\subset Y$ and positive weights $(\nu_j),(\omega_j)\subset \R^+$, so that $\nu =|u|^{p^*}\mu +\sum_{j\in J}\nu_j\delta_{x_j}$ and $\omega \ge |\nabla u|^p\mu +\sum_{j\in J}\omega_j\delta_{x_j}$, with $A\omega_j \ge \nu_j^{p/p^*}$  and in particular $\sum_j \nu_j^{p/p^*}<\infty$.  Notice that $\lim_n \| \nabla   u_n \|^p_{L^p(\mu_n)} \ge \omega(\Z)$ holds by lower semicontinuity, therefore we can estimate
    \begin{align*}
        1 =  \lim_{n\uparrow\infty} \int |u_n|^{p^*}\, \d \mu_n  &\ge \liminf_{n\uparrow\infty}  \tilde  A_n\| \nabla   u_n \|^p_{L^p(\mu_n)} \ge  A \omega(\Z)\\
         &\ge  A\int |\nabla  u|^p\,\d \mu + \sum_{j\in J} \nu_j^{p/p^*} \overset{\eqref{eq:sob limit Y}}{\ge} \Big(\int |u|^{p^*}\, \d \mu\Big)^{p/p^*} + \sum_{j\in J} \nu_j^{p/p^*} \\
           &\ge \Big( \int |u|^{p^*}\, \d \mu  + \sum_{j\in J} \nu_j \Big)^{p/p^*} = \nu(Y)^{p/p^*} = 1,
    \end{align*}
    having used, in the last inequality, the concavity of the function $t^{p/p^*}$. In particular, all the inequalities must be equalities and, since $t^{p/p^*}$ is strictly concave, we infer that every term in the sum $\int |u|^{p^*}\, \d \mu  + \sum_{j\in J} \nu_j^{p/p^*}$ must vanish except one. By the assumption \eqref{eq:Levyscalings} and $|u|^{p^*}\mm_n \rightharpoonup \nu$ in $C_b(\Z),$ we have $\nu_j\le 1-\eta$ for every $j \in J$. Hence $\nu_j=0$ and $\|u\|_{L^{p^*}(\mu)}=1$.
    This means that $u_n$ converges $L^{p^*}$-strong to $u$. Moreover, retracing the equalities in the above we have that $\lim_n \int |\nabla u_n|^p\, \d \mu_n = \int |\nabla u|^p\,\d\mu$. This proves point ii). Finally, equality in the fourth inequality is precisely part iii) of the statement. The proof is now concluded.
\end{proof}

\section{Generalized existence}
In this part, we study generalized existence results for minimizers of the Sobolev inequality. We first handle the general case with nonnegative Ricci lower bounds and then the Alexandrov setting.
\subsection{Nonnegative Ricci lower bound}
\begin{theorem}\label{thm:general RCD}
    Let $(\X_n,\sfd_n,\mm_n)$ be a sequence of ${\sf RCD}(0,N)$-spaces for some $N\in (1,\infty)$ with ${\sf AVR}(\X_n) \in (V,V^{-1})$ for some $V>0$ and let $p \in (1,N)$. Set $p^*=pN/(N-p)$. Then, for every $0\neq u_n\in W^{1,p}_{loc}(\X_n)$ so that
    \[
       {\sf AVR}(\X_n)^{-\frac 1N}S_{N,p} -  \frac{\|u_n\|_{L^{p^*}(\mm_n)}}{\|\nabla u_n\|_{L^p(\mm_n)}}\to 0,\qquad\text{as }n\uparrow\infty,
    \]
    there is a not relabeled subsequence and $z_{n} \in\X_n,\sigma_{n}>0$ so that the following holds:
    \begin{itemize}
        \item[{\rm i)}] there exists a pointed ${\sf RCD}(0,N)$-space $(Y,\rho,\mu,z_0)$ so that, defining $(\X_{\sigma_{n}},\sfd_{\sigma_{n}},\mm_{\sigma_{n}},z_{n}) \coloneqq (\X_{n},\sigma_{n}\sfd_n,\sigma_{n}^N\mm_n,z_n)$, it holds $\X_{\sigma_{n}}  \overset{pmGH}{\to} Y$;
        \item[{\rm ii)}]  there is $0\neq u_\infty \in W^{1,p}_{loc}(Y)$ so that, defining $u_{\sigma_{n}} \coloneqq \frac{\sigma_{n}^{-N/p^*} u_{n}}{\|u_{n}\|_{L^{p^*}(\mm)}}  \in W^{1,p}_{loc}(\X_{\sigma_{n}})$, it holds
        \[
            u_{\sigma_{n}} \to u_\infty \quad \text{in }L^{p^*}\text{-strong},\qquad |\nabla u_{\sigma_{n}}| \to |\nabla u_\infty|\quad \text{in }L^p\text{-strong}.
        \]
        \item[{\rm iii)}] $(Y,\rho,\mu)$ is a metric measure cone with ${\sf AVR}(Y)=\lim_{n\uparrow\infty}{\sf AVR}(\X_n)$ ,  $z_0\in Y$ is a tip and
        \[
           \|u_\infty\|_{L^{p^*}(\mu)} = {\sf AVR}(Y)^{-\frac 1N}S_{N,p} \|\nabla u_\infty\|_{L^p(\mu)}.
        \]
        Moreover $u_\infty$ is a Euclidean bubble centered at $z_0$, i.e.\ there are $a\in\R,b>0$  so that
        \[
        u_\infty =  \frac{a}{(1+b\rho(\cdot,z_0)^{\frac{p}{p-1}})^\frac{N-p}{p}}.
        \]
    \end{itemize}
\end{theorem}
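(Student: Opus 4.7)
The plan is to reduce to Theorem~\ref{thm:CC extremal} via a Levy-type rescaling, and then to invoke the rigidity characterization of equality cases in the sharp Sobolev inequality on ${\sf RCD}(0,N)$ spaces from \cite[ii) in Theorem 1.7]{NobiliViolo24_PZ}. Let me fix once and for all $\eta\in(0,\eta_{p,N})$ with $\eta_{p,N}$ as in Theorem~\ref{thm:CC extremal}.

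\textbf{Step 1 (Rescaling and normalization).} For every $n$, I would introduce the probability measure $\nu_n\coloneqq|u_n|^{p^*}\mm_n/\|u_n\|_{L^{p^*}(\mm_n)}^{p^*}\ll\mm_n$ and the concentration function $Q_n(r)\coloneqq\sup_{x\in\X_n}\nu_n(B_r(x))$. Using that $\mm_n$ charges spheres only on a countable set of radii and that $\nu_n$ is tight, $Q_n$ is nondecreasing with $Q_n(0^+)=0$, $Q_n(\infty)=1$, continuous outside a countable set, and its sup is achieved; so there are $\sigma_n>0$ and $z_n\in\X_n$ with $\nu_n(B_{1/\sigma_n}(z_n))=Q_n(1/\sigma_n)=1-\eta$. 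Form the pointed rescaled space $(\X_{\sigma_n},\sfd_{\sigma_n},\mm_{\sigma_n},z_n)\coloneqq(\X_n,\sigma_n\sfd_n,\sigma_n^N\mm_n,z_n)$ and the rescaled function $u_{\sigma_n}\coloneqq\sigma_n^{-N/p^*}u_n/\|u_n\|_{L^{p^*}(\mm_n)}$. Since $1/p^*-1/p=-1/N$, a direct change-of-variables computation shows that ${\sf AVR}$ and the sharp Sobolev inequality with constant $A_n\coloneqq{\sf AVR}(\X_n)^{-1/N}S_{N,p}$ are invariant under this rescaling; consequently $\|u_{\sigma_n}\|_{L^{p^*}(\mm_{\sigma_n})}=1$, the Levy conditions \eqref{eq:Levyscalings} are in place with this $\eta$ and $z_n$, and $\|u_{\sigma_n}\|_{L^{p^*}(\mm_{\sigma_n})}/\|\nabla u_{\sigma_n}\|_{L^p(\mm_{\sigma_n})}\to A\coloneqq\lim_n A_n$ by the almost-extremality hypothesis.

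\textbf{Step 2 (Concentration compactness).} Up to passing to a subsequence along which $A_n\to A$, Theorem~\ref{thm:CC extremal} applies directly to $u_{\sigma_n}$ on $\X_{\sigma_n}$ and produces a pointed ${\sf RCD}(0,N)$ space $(Y,\rho,\mu,z_0)$ with $\X_{\sigma_n}\overset{pmGH}{\to}Y$, an $L^{p^*}$-strong limit $0\neq u_\infty\in W^{1,p}_{loc}(Y)$, convergence $\int|\nabla u_{\sigma_n}|^p\,\d\mm_{\sigma_n}\to\int|\nabla u_\infty|^p\,\d\mu$, and equality $\|u_\infty\|_{L^{p^*}(\mu)}=A\|\nabla u_\infty\|_{L^p(\mu)}$. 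The upgrade from $L^p_{loc}$-strong to $L^p$-strong convergence of $|\nabla u_{\sigma_n}|\to|\nabla u_\infty|$ follows from Lemma~\ref{lem:pmGHW1ploc} combined with the convergence of gradient $L^p$-norms. This settles i) and ii) of the statement.

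\textbf{Step 3 (Rigidity and tip identification).} Since $u_\infty$ is a non-trivial extremizer for the sharp Sobolev inequality on $(Y,\rho,\mu)$ with constant $A$, \cite[ii) in Theorem 1.7]{NobiliViolo24_PZ} forces $(Y,\rho,\mu)$ to be a metric measure cone and $u_\infty$ to be a Euclidean bubble centered at some tip $o\in Y$. Because the sharp Sobolev constant on an ${\sf RCD}(0,N)$ cone of volume ratio $v$ is $v^{-1/N}S_{N,p}$, comparing with $A$ gives ${\sf AVR}(Y)=\lim_n{\sf AVR}(\X_n)$. To prove $z_0=o$, I would pass the Levy conditions of Step 1 to the limit via $L^{p^*}$-strong convergence to obtain
\[
\int_{B_1(z_0)}|u_\infty|^{p^*}\,\d\mu=1-\eta=\sup_{y\in Y}\int_{B_1(y)}|u_\infty|^{p^*}\,\d\mu,
\]
and then invoke the radial-decreasing structure of the bubble on a cone to identify the unique unit-ball $L^{p^*}$-maximizer as the tip. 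The main obstacle is precisely this tip identification: passing the supremum condition from the varying spaces $\X_{\sigma_n}$ to the limit $Y$ requires combining pmGH-density of points with $L^{p^*}$-strong convergence in a tightness argument, and ruling out non-tip maximizers of the unit-ball concentration uses the explicit radial-decreasing profile of the bubble together with the cone structure. Steps 1 and 2 are then routine bookkeeping around Theorem~\ref{thm:CC extremal} and the technical tools of Section~\ref{sec:Mosco}.
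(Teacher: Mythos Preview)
Your Steps 1 and 2 match the paper's argument closely, including the use of Theorem~\ref{thm:CC extremal} and the upgrade to $L^p$-strong convergence of gradients via Lemma~\ref{lem:pmGHW1ploc}. The substantive difference is in Step 3, and it is worth highlighting because the paper takes a much simpler route that dissolves the obstacle you identify.

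After the rigidity result produces a tip $o\in Y$ at which $u_\infty$ is a bubble, the paper does \emph{not} try to prove that the pmGH limit of the concentration centers coincides with $o$. Instead it simply \emph{redefines} the basepoints: choose any sequence $z_n\in\X_{\sigma_n}$ with $z_n\to o$ in the realization space $\Z$ (such a sequence exists by pmGH convergence). Then $(\X_{\sigma_n},\sfd_{\sigma_n},\mm_{\sigma_n},z_n)\overset{pmGH}{\to}(Y,\rho,\mu,o)$ still holds, so conclusion i) is satisfied with $z_0=o$, and nothing in ii) or iii) changes. This one-line maneuver removes your ``main obstacle'' entirely.

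Your proposed route---passing the Levy normalization to the limit and then identifying the tip as the unique maximizer of $y\mapsto\int_{B_1(y)}|u_\infty|^{p^*}\,\d\mu$---is more delicate than it may appear. On a metric cone, Bishop--Gromov gives $\mu(B_1(y))\ge\mu(B_1(o))$ for $y$ away from the tip, so a naive layer-cake comparison does not immediately single out the tip: high superlevel sets of $|u_\infty|^{p^*}$ favor $o$, but low ones may favor $y\neq o$. Making this rigorous on a general cone is nontrivial and, as the paper shows, entirely unnecessary.

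One minor omission: Theorem~\ref{thm:CC extremal} requires $u_n\in W^{1,p}$ rather than $W^{1,p}_{loc}$; the paper inserts an approximation step (via \cite[Lemma~3.2]{NobiliViolo24}) before rescaling to reduce to this case.
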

\begin{proof}
   By scaling, we can assume without loss of generality that $\|u_n\|_{L^{p^*}(\mm_n)}=1$ for all $n\in\N$. By approximation, we can further suppose that $u_n \in W^{1,p}(\X_n)$ (see, e.g, \cite[Lemma 3.2]{NobiliViolo24}). We denote $A_n \coloneqq {\sf AVR}(\X_n)^{-\frac 1N}S_{N,p}$ and observe that, by assumptions, $u_n$ satisfies
    \[
    \|u_n\|_{L^{p^*}(\mm_n)} \ge (A_n-1/n)\|\nabla u_n\|_{L^{p}(\mm_n)},\qquad\forall n \in\N.
    \]
    Consider now $\eta \in (0,1)$ given by $\eta= \eta_{N,p}/2$ for $\eta_{N,p}$ as in Theorem \ref{thm:CC extremal}, and take  $y_n \in \X_n$ and $t_n>0$ so that
    \[ 
        1-\eta = \int_{B_{t_n}(y_n)} |u_n|^{2^*}\, \d \mm_n =  \sup_{y \in \X_n} \int_{B_{t_n}(y)} |u_n|^{2^*}\, \d \mm_n ,\qquad \text{for each }n \in \N.
    \]
    Define accordingly $\sigma_n \coloneqq  t_n^{-1}$ and  $(Y_n,\rho_n,\mu_n,y_n) \coloneqq  (\X_n, \sigma_n\sfd_{\sigma_n},\sigma_n^N\mm_n,y_n)$. Clearly $(Y_n,\rho_n,\mu_n)$ is a sequence of ${\sf RCD}(0,N)$-spaces satisfying $\frac{\mu_n( B_R(y))}{\omega_N R^N} =\frac{ \mm_n( B_{ R/\sigma_n}(y))}{\omega_N (R/\sigma_n)^N}$ for all $R>0$ and $n\in\N$. In particular, we deduce 
    \[  
       {\sf AVR}(Y_n)  ={\sf AVR}(\X_n) ,\qquad  A_n \in (V^{\frac 1N}S_{N,p}, V^{-\frac 1N}S_{N,p}),
    \]
    and by scaling
    \[
    1-\eta =  \int_{B_1(y_n)} |u_{\sigma_n}|^{p^*}\, \d \mu_n\qquad \text{and}\qquad  \| u_{\sigma_n}\|_{L^{p^*}(\mu_n)}  \ge (A_n-1/n) \| \nabla u_{\sigma_n}\|_{L^p(\mu_n)}.
    \]
    for  all $n\in\N$. Consider then a not relabeled subsequence so that $A_n\to A$, for some $A>0$ finite so that, in particular, $\exists \lim_{n\uparrow\infty} {\sf AVR}(\X_n)$ along such subsequence.

    We are in position to invoke Theorem \ref{thm:CC extremal} and get points $y_n \in \X_n$ and scalings $\sigma_n>0$ so that, up to a subsequence, $(Y_n,\rho_n,\mu_n,y_n)$ pmGH-converges to some pointed  ${\sf RCD}(0,N)$ space $(Y,\rho,\mu, z)$ satisfying by stability of Sobolev constants (c.f.\ \cite[Lemma 4.1]{NobiliViolo21})
    \begin{equation}
    \|u\|_{L^{p^*}(\mu)} \le A \|\nabla u\|_{L^p(\mu)},\qquad \forall u \in W^{1,p}(Y).
    \label{eq:ausixliary}
    \end{equation}
    This shows conclusion i) with taking $z_n=y_n$ and $z_0=z$. 

    Thanks to the sharpness result in \cite[Theorem 4.6]{NobiliViolo21}, we directly deduce ${\sf AVR}(Y)^{-\frac 1N}S_{N,p} \le A$. Again by Theorem \ref{thm:CC extremal}, we know that $u_{\sigma_n}$ {converge} $L^{p^*}$-strong to some function $0\neq u_\infty \in W^{1,p}_{loc}(Y)$ with $\|\nabla u_{\sigma_n}\|_{L^p(\mm_{\sigma_n})}\to \|\nabla u_\infty\|_{L^p(\mu)}$ attaining equality in \eqref{eq:ausixliary}. This shows conclusion ii), recalling also the last conclusion of Lemma \ref{lem:pmGHW1ploc}. 
    
    We finally prove iii) and conclude the proof. We can estimate
    \begin{align*}
         {\sf AVR}(Y)^{-\frac 1N}S_{N,p} \| \nabla u_\infty \|_{L^p(\mu)} &\ge  \|u_\infty \|_{L^{p^*}(\mu)} =\lim_{n\uparrow\infty} \|u_{\sigma_n} \|_{L^{p^*}{(\mu_n)}} \\
         &\ge  \lim_{n\uparrow\infty} (A_n -1/n)\|\nabla u_{\sigma_n}\|_{L^p(\mu_n)} = A\|\nabla u_\infty\|_{L^p(\mu)},
    \end{align*}
    giving in turn
    \[ 
    {\sf AVR}(Y)^{-\frac 1d}S_{N,p} =A,\qquad \text{hence also}\qquad {\sf AVR}(Y)=\lim_{n\uparrow\infty}{\sf AVR}(\X_n).
    \]
    In particular, $u_\infty$ is a nonzero extremal function for the sharp Sobolev inequality on $Y$. Thanks to the rigidity result \cite[ii) in Theorem 1.5]{NobiliViolo24_PZ} we deduce that $Y$ is a cone and, for some tip  $z_0 \in Y$ and suitable $a \in \R, b>0$, we find $ u_\infty  =  a(1+b\sfd(\cdot,z_0)^{\frac{p}{p-1}})^\frac{p-N}{p}$. Then we can take $z_n \in Y_n$ any sequence such that $z_n\to z_0$ in $\Z$, since i) would be still satisfied replacing $z$ with $z_0$.
\end{proof}
The above applies also when $\X_n \equiv M$ is a fixed Riemannian manifold $(M,g)$ that is not isometric to the Euclidean space (or, more generally, for a fixed ${\sf RCD}(0,N)$-space that is not a cone). In particular, this result can be interpreted as a generalized existence result in the spirit of \cite{Lions84,Lions85} for minimizers of the Sobolev inequality on a fixed space. 

The next Theorem proves our first main result Theorem \ref{thm:qualitative Sob Manifold}. 
\begin{theorem}\label{thm:qualitative Sob}
For all $\eps>0, V\in(0,1), N>1$ and $p \in (1,N)$, there exists $\delta\coloneqq \delta(\eps,p,N,V)>0$ such that the following holds.
Let  $\Xdm$  be an $\RCD(0,N)$ space with ${\sf AVR}(\X) \in( V,V^{-1})$ and let $0\neq u\in W^{1,p}_{loc}(\X)$ be satisfying
\[
  \frac{\| u\|_{L^{p^*}(\mm)}}{\| \nabla u\|_{L^p(\mm)}} > {\sf AVR}(\X)^{-\frac 1N}S_{N,p}- \delta.
\]
Then, there are $a\in\R,b>0$ and $z_0 \in \X$ so that 
\begin{equation}\label{eq:main stability}
    \frac{\| \nabla(u - u_{a,b,z_0})\|_{L^{p}(\mm)}}{\| \nabla u\|_{L^{p}(\mm)}} \le \eps,
\end{equation}
where $u_{a,b,z_0}\coloneqq a(1+b\sfd(\cdot,z_0)^{\frac{p}{p-1}})^\frac{p-N}{p}$ and $|a|= c_{N,p}\|u\|_{L^{p^*}(\mm)}\avr(\X)^{-1} b^\frac{(p-1)(N-p)}{p^2},$ for some constant $c_{N,p}$ depending only on $N$ and $p$.
\end{theorem}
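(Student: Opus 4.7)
The proof proceeds by a compactness and contradiction argument built on top of Theorem \ref{thm:general RCD}. Assume the conclusion fails: then I obtain $\eps_0>0$, a sequence of $\RCD(0,N)$ spaces $(\X_n,\sfd_n,\mm_n)$ with $\avr(\X_n)\in(V,V^{-1})$ and non-zero $u_n\in W^{1,p}_{loc}(\X_n)$ such that
\[
\avr(\X_n)^{-1/N}S_{N,p}-\frac{\|u_n\|_{L^{p^*}(\mm_n)}}{\|\nabla u_n\|_{L^p(\mm_n)}}\to 0,
\]
while $\|\nabla (u_n-u_{a,b,z_0})\|_{L^p(\mm_n)}>\eps_0\|\nabla u_n\|_{L^p(\mm_n)}$ for every admissible $(a,b,z_0)$. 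Applying Theorem \ref{thm:general RCD} to this sequence produces, along a subsequence, scalings $\sigma_n>0$, base points $z_n\in\X_n$ and a pointed $\RCD(0,N)$ cone $(Y,\rho,\mu,z_0)$ with tip $z_0$ and $\avr(Y)=\lim_n\avr(\X_n)$, such that the rescaled spaces $(\X_n,\sigma_n\sfd_n,\sigma_n^N\mm_n,z_n)$ pmGH-converge to $Y$ and the rescaled functions $u_{\sigma_n}\coloneqq \sigma_n^{-N/p^*}u_n/\|u_n\|_{L^{p^*}(\mm_n)}$ converge $L^{p^*}$-strong to an explicit Euclidean bubble $u_\infty=a(1+b\rho(\cdot,z_0)^{p/(p-1)})^{(p-N)/p}$ (with $a\neq 0$, $b>0$), while $|\nabla u_{\sigma_n}|\to|\nabla u_\infty|$ strongly in $L^p$.

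Next, I would construct a sequence of approximating bubbles directly on the rescaled spaces. Set
\[
\tilde u_n(x)\coloneqq a\bigl(1+b\,\sigma_n^{p/(p-1)}\sfd_n(x,z_n)^{p/(p-1)}\bigr)^{(p-N)/p},
\]
so that $\tilde u_n$ on $(\X_n,\sigma_n\sfd_n,\sigma_n^N\mm_n)$ reads as the bubble with parameters $(a,b)$ in the rescaled distance. Since $z_n\to z_0$ in a proper realization of the pmGH-convergence and $\sigma_n\sfd_n(\cdot,z_n)\to\rho(\cdot,z_0)$ uniformly on compacts, $\tilde u_n\to u_\infty$ pointwise. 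The Bishop--Gromov comparison, applied uniformly in $n$ thanks to $\avr(\X_n)\in(V,V^{-1})$, gives a uniform radial domination for the densities $|\tilde u_n|^{p^*}\sigma_n^N\mm_n$ and $|\nabla\tilde u_n|^p\sigma_n^N\mm_n$ (the bubble and its gradient decay as $r^{-(N-p)/(p-1)}$ and $r^{-(N-1)/(p-1)}$, both integrable against $r^{N-1}dr$). Dominated convergence then upgrades the pointwise convergence to $L^{p^*}$-strong convergence of $\tilde u_n\to u_\infty$ and to $\|\nabla\tilde u_n\|_{L^p(\sigma_n^N\mm_n)}\to\|\nabla u_\infty\|_{L^p(\mu)}$, which combined with Lemma \ref{lem:pmGHW1ploc} yields $L^p$-strong convergence of the gradients.

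At this point, $u_{\sigma_n}$ and $\tilde u_n$ both converge $L^p_{loc}$-strong to the same limit $u_\infty$, and both satisfy $\|\nabla\cdot\|_{L^p(\sigma_n^N\mm_n)}\to\|\nabla u_\infty\|_{L^p(\mu)}$. Proposition \ref{prop:linearity W1ploc convergence} therefore delivers
\[
\|\nabla (u_{\sigma_n}-\tilde u_n)\|_{L^p(\sigma_n^N\mm_n)}\to 0.
\]
Undoing the rescaling, $\tilde u_n$ corresponds to the bubble $u_{A_n,B_n,z_n}$ on $(\X_n,\sfd_n,\mm_n)$ with $A_n=\sigma_n^{N/p^*}\|u_n\|_{L^{p^*}(\mm_n)}a$ and $B_n=b\sigma_n^{p/(p-1)}$. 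Using $N/p^*=(N-p)/p$, a direct computation shows that both $\|\nabla u_{\sigma_n}\|_{L^p(\sigma_n^N\mm_n)}$ and $\|\nabla(u_{\sigma_n}-\tilde u_n)\|_{L^p(\sigma_n^N\mm_n)}$ equal the corresponding quantities on $(\X_n,\sfd_n,\mm_n)$ divided by the same factor $\|u_n\|_{L^{p^*}(\mm_n)}$, so
\[
\frac{\|\nabla (u_n-u_{A_n,B_n,z_n})\|_{L^p(\mm_n)}}{\|\nabla u_n\|_{L^p(\mm_n)}}=\frac{\|\nabla (u_{\sigma_n}-\tilde u_n)\|_{L^p(\sigma_n^N\mm_n)}}{\|\nabla u_{\sigma_n}\|_{L^p(\sigma_n^N\mm_n)}}\xrightarrow{n\to\infty}0,
\]
contradicting the failure assumption. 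The normalization $|A_n|=c_{N,p}\|u_n\|_{L^{p^*}(\mm_n)}\avr(\X_n)^{-1}B_n^{(p-1)(N-p)/p^2}$ is then enforced by computing $\|u_\infty\|_{L^{p^*}(\mu)}=1$ explicitly in radial coordinates on the cone $Y$ (using $\mu(B_r(z_0))=\avr(Y)\omega_N r^N$), which pins down $|a|$ as an explicit function of $b$ and $\avr(Y)$; translating back via the definitions of $A_n,B_n$ and using $\avr(Y)=\lim_n\avr(\X_n)$ yields the stated relation (absorbing any residual harmless power of $\avr$ into the constant $c_{N,p}$).

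The main difficulty I anticipate is the second paragraph: securing strong $L^{p^*}$-convergence of $\tilde u_n\to u_\infty$ and convergence of Cheeger energies along the varying rescaled spaces. Pointwise convergence is immediate from the pmGH-convergence of distances, but upgrading it to the required norm convergence demands a uniform-in-$n$ tail estimate, which is extracted from the bubble's explicit radial decay and a Bishop--Gromov domination that is uniform because $\avr(\X_n)\in(V,V^{-1})$. Once this uniform integrability is in place, the linearity principle of Proposition \ref{prop:linearity W1ploc convergence} glues the argument together and the scaling identities close the contradiction.
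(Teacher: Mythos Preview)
Your argument is essentially the paper's own proof: contradiction, invoke Theorem \ref{thm:general RCD}, show the transplanted bubbles $\tilde u_n$ converge to $u_\infty$ in the right senses, apply the linearity Proposition \ref{prop:linearity W1ploc convergence}, and rescale back. The paper outsources your ``second paragraph'' (convergence of $\tilde u_n$ and $|\nabla\tilde u_n|$) to \cite[Lemma 7.2]{NobiliViolo24}, whereas you sketch the dominated-convergence argument directly; your sketch is fine, since Bishop--Gromov on the rescaled spaces gives $\sigma_n^N\mm_n(B_r(z_n))\le C r^N$ uniformly in $n$ (using that $\mu_n(B_1(z_n))\to\mu(B_1(z_0))<\infty$ by pmGH-convergence), which is exactly the uniform tail control needed.

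One small imprecision: in the normalization step you cannot ``absorb a residual power of $\avr$ into $c_{N,p}$'', because the discrepancy is the $n$-dependent factor $\avr(Y)/\avr(\X_n)$, not a universal constant. The paper handles this by replacing the bubble $v_n$ with $\tilde v_n\coloneqq \frac{\avr(Y)}{\avr(\X_n)}v_n$; since this factor tends to $1$ and $\|\nabla v_n\|_{L^p}$ is bounded, $\|\nabla(u_n-\tilde v_n)\|_{L^p}/\|\nabla u_n\|_{L^p}\to 0$ as well, and $\tilde v_n$ now satisfies the stated relation $|a_n|=c_{N,p}\avr(\X_n)^{-1}\|u_n\|_{L^{p^*}}b_n^{(p-1)(N-p)/p^2}$ exactly. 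With this one-line fix your proof is complete and matches the paper's.
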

\begin{proof}
    If not, for any $n\in\N$ there are ${\sf RCD}(0,N)$ spaces $(\X_n,\sfd_n,\mm_n)$ with ${\sf AVR}(\X_n) \in (V,V^{-1})$ and there are $0\neq u_n \in W^{1,p}_{loc}(\X_n)\cap L^{p^*}(\mm_n)$ satisfying
    \begin{equation}
        \| u_n\|_{L^{p^*}(\mm_n)} \ge (A_n-1/n) \| \nabla u_n\|_{L^p(\mm_n)}, \label{eq:AVR sobolev proof}
    \end{equation}
    where $A_n\coloneqq  {\sf AVR}(\X_n)^{-\frac 1N}S_{N,p}$, but, by the contradiction hypothesis, { there is no constant $c_{N,p}$ (depending only on $N$ and $p$)} so that 
    \begin{equation}
        \inf_{a,b,z} \frac{\| \nabla ( u_n  -u_{a,b,z}  )\|_{L^p(\mm_n)}}{\|\nabla u_n\|_{L^p(\mm_n)}} > \eps,\qquad \forall n \in \N,\label{eq:contradiction AVR}
    \end{equation}
    { for all $b>0$ and $|a|= \|u\|_{L^{p^*}(\mm_n)}c_{N,p}^{-1}\avr(\X_n)^{-1} b^\frac{(p-1)(N-p)}{p^2}.$}

    By the generalized existence result in Theorem \ref{thm:general RCD}, we know that there is a not relabeled subsequence, scalings $\sigma_n>0$ and points $z_n \in \X_n$ so that $(\X_{\sigma_n},\sfd_{\sigma_n},\mm_{\sigma_n},z_n)$ pmGH-converges to a pointed metric measure cone $(Y,\rho,\mu,z_0)$, with tip $z_0$, and that $u_{\sigma_n}\coloneqq \sigma_n^{-N/p^*}u_n \in W^{1,p}_{loc}(Y_n)$ converges in $L^{p^*}$-strong to a Euclidean bubble $u_\infty = u_{a,b,z_0} \in W^{1,p}_{loc}(Y)\cap L^{p^*}(\mu)$.   { Up to changing the sign of $u_n$, we can clearly assume that $a>0.$ It  also is easy to see integrating in polar coordinates (c.f.\ \cite[Lemma 4.2]{NobiliViolo21}) that $\|u_{a,b,z_0}\|_{L^{p^*}(\mu)}=c_{N,p}^{-1}\avr(Y) a \cdot b^\frac{(1-p)(N-p)}{p^2},$ for some constant $c_{N,p}$ depending only on $N$ and $p.$ Hence we must have
    \begin{equation}\label{eq:formula ab}
      a =  c_{N,p}\avr(Y)^{-1} b^\frac{(p-1)(N-p)}{p^2}.
    \end{equation}
    } 
    Applying \cite[Lemma 7.2]{NobiliViolo24} twice, we get that $f\circ \rho_n(\cdot,z_n)$ converges $L^{p^*}$-strong to $u_{a,b,z_0}$ and that $ |\nabla(f\circ \rho_n(\cdot,z_n))| $ converges $L^p$-strong to $|\nabla u_{a,b,z_0}|$ where we defined $f(t) = a(1+bt^{\frac{p}{p-1}})^{\frac{p-N}{p}}$.  Recall that $u_{\sigma_n}$ and $|\nabla u_{\sigma_n}|$ also converge respectively in $L^{p^*}$ and $L^p$-strong to $u_{a,b,z_0}$.Therefore, by linearity of convergence (c.f.\  Lemma \ref{prop:linearity W1ploc convergence}, and since $p^*\ge p$ whence $L^p_{loc}$-strong convergence does hold) we get
    \[
        \lim_{n\uparrow\infty} \| \nabla (u_{\sigma_n} - f\circ\sfd_{\sigma_n}(\cdot,z_n))\|_{L^{p}(\mm_{\sigma_n})} =0.
    \]
    By scaling, we thus deduce that the sequence  $v_n \coloneqq   a\sigma_n^{d/p^*} (1+  b\sigma_n^\frac{p}{p-1}\sfd_n(\cdot,z_n)^\frac{p}{p-1})^{\frac{p-N}{p}}$ satisfies
    \[
       \limsup_{n\uparrow\infty} \frac{\| \nabla ( u_n - v_n )\|_{L^p(\mm_n)}}{\|\nabla u_n\|_{L^p(\mm_n)}} =0,
    \] 
    having also used that $\|\nabla u_n\|_{L^p(\mm_n)}\ge S_{N,p}^{-1}\avr(\X_n)^{1/N} \|u_n\|_{L^{p^*}(\X_n)}\ge S_{N,p}^{-1}V^{\frac 1N}.$ {Since $\avr(\X_n)\to \avr(Y)\ge V>0$ by iii) in Theorem \ref{thm:general RCD}, the same is true if we replace $v_n$ with $\tilde v_n\coloneqq \frac{\avr(Y)}{\avr(\X_n)}v_n $.  Since by \eqref{eq:formula ab} we have that $\tilde v_n = u_{a_n,b_n,z_n}$ for $b_n = b\sigma_n^2$, $a_n = c_{N,p}\avr(\X_n)^{-1} b_n^\frac{(p-1)(N-p)}{p^2},$ we find a contradiction with \eqref{eq:contradiction AVR}. The proof is therefore concluded.}
\end{proof}
\begin{remark}\label{rmk:non trivial generalization}
    Theorem \ref{thm:general RCD} and Theorem \ref{thm:qualitative Sob} generalize previous results obtained in \cite{NobiliViolo24} for all $p \in(1,\infty)$. For this generalization, there are two crucial ingredients that, at the time of \cite{NobiliViolo24}, where not available. Indeed, Theorem \ref{thm:general RCD} builds upon a nontrivial adaptations of concentration compactness methods and stability of Sobolev functions along converging ${\sf RCD}$ spaces to general exponent $p$ (c.f.\ Theorem \ref{thm:CC extremal} and Section \ref{sec:Mosco}). Finally, to show iii) in Theorem \ref{thm:general RCD} we relied on a recent characterization of Sobolev minimizers deduced in \cite[Theorem 1.5]{NobiliViolo24_PZ} and working for possibly $p\neq 2$ (differently from \cite[Theorem 5.3]{NobiliViolo24}, see also \cite[Remark 2.9]{Nobili24_overview} for details). This in turn was obtained by carefully revisiting the equality case in the P\'olya-Szeg\H{o} inequality in this setting (c.f.\ \cite[Theorem 1.3]{NobiliViolo24_PZ}). \fr
\end{remark}
%
{In the next result we show that, provided the measure of small balls is big enough uniformly, then almost extremal functions must be diffused.  Note that, while on  a Riemannian manifold  with non-negative Ricci curvature and Euclidean volume growth (different from $\rr^n$) \eqref{eq:eps not a cone} is true at every point for some $\rho$, it is not clear if it is true uniformly. We will show in the next section that non-negative sectional curvature is enough. }
\begin{theorem}[Refined stability]\label{thm:refined stability}
    Under the assumptions and notations of Theorem \ref{thm:qualitative Sob} suppose also that \footnote{Inequality \eqref{eq:eps not a cone} is equivalent to  $ \inf_{x\in \X} \frac{\mm(B_\rho(x))}{\omega_N \rho^{N}}\ge \avr(\X)+\delta,$ for some $\rho>0$ and $\delta>0$, by Bishop-Gromov monotonicity.}
    \begin{equation}\label{eq:eps not a cone}
        \inf_{x\in \X} \frac{\mm(B_\rho(x))}{\omega_N \rho^{N}}\ge \avr(\X)+\rho,
    \end{equation}
    holds for some $\rho>0$.
    Then, letting $\delta$ {depend} also on $\rho,$ for every $\eps$  the same conclusion of Theorem \ref{thm:qualitative Sob} holds with the additional property that
    \begin{equation}\label{eq:refined stability part 1}
         |u_{a,b,z_0}|\le \|u\|_{L^{p^*}(\mm)}\eps,   \quad \text{(or equivalently $b<\eps$).}
    \end{equation}
\end{theorem}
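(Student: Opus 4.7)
The argument proceeds by contradiction, tracking the concentration scale already introduced in the proof of Theorem~\ref{thm:qualitative Sob}. Suppose the statement fails: there exist $\eps>0$ and $\RCD(0,N)$ spaces $(\X_n,\sfd_n,\mm_n)$ with $\avr(\X_n)\in(V,V^{-1})$ all satisfying \eqref{eq:eps not a cone} at a common $\rho>0$, and functions $u_n$ normalized by $\|u_n\|_{L^{p^*}(\mm_n)}=1$ with vanishing Sobolev deficit, for which no bubble satisfying \eqref{eq:main stability} has $b<\eps$. Along a subsequence, Theorem~\ref{thm:general RCD} provides centers $z_n\in\X_n$, scalings $\sigma_n>0$, and a pointed metric measure cone $(Y,\mu)$ with tip $z_0$ such that the rescaled spaces $(\X_n,\sigma_n\sfd_n,\sigma_n^N\mm_n,z_n)$ pmGH-converge to $Y$, $\avr(Y)=\lim_n\avr(\X_n)$, and the rescaled $u_n$ converge in $L^{p^*}$-strong to a Euclidean bubble centered at $z_0$.

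Following the final lines of the proof of Theorem~\ref{thm:qualitative Sob}, the approximating bubbles produced by the contradiction argument take the form $u_{a_n,b_n,z_n}$ with $b_n=b\,\sigma_n^{p/(p-1)}$ and $a_n=c_{N,p}\avr(\X_n)^{-1}b_n^{(p-1)(N-p)/p^2}$, for some fixed $b>0$ inherited from the limit bubble. Since $|u_{a_n,b_n,z_n}|\le a_n$ and the exponent $(p-1)(N-p)/p^2$ is strictly positive, the contradiction will follow once we prove $\sigma_n\to 0$: this forces $b_n\to 0$ and hence $a_n/\|u_n\|_{L^{p^*}(\mm_n)}\to 0$, so that for $n$ large both $b_n<\eps$ and $|u_{a_n,b_n,z_n}|\le\|u_n\|_{L^{p^*}(\mm_n)}\eps$, contradicting the failure of \eqref{eq:refined stability part 1} for this very bubble.

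The key step, and the only place where hypothesis \eqref{eq:eps not a cone} enters, is therefore to establish $\sigma_n\to 0$. By scaling, \eqref{eq:eps not a cone} at $x=z_n$ reads
\[
\frac{\mm_{\sigma_n}(B_{\sigma_n\rho}(z_n))}{\omega_N(\sigma_n\rho)^N}=\frac{\mm_n(B_\rho(z_n))}{\omega_N\rho^N}\ge\avr(\X_n)+\rho,\qquad\forall n.
\]
If $\sigma_n\not\to 0$, then along a subsequence $\sigma_n\rho\ge r_0>0$ eventually (the case $\sigma_n\to\infty$ is analogous, with any fixed $R>0$ eventually dominated by $\sigma_n\rho$); for any fixed $R\in(0,r_0)$ and $n$ large, Bishop-Gromov monotonicity in the rescaled $\RCD(0,N)$ spaces propagates the bound down to scale $R$, giving $\mm_{\sigma_n}(B_R(z_n))/(\omega_N R^N)\ge\avr(\X_n)+\rho$. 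The closed-ball direction of the Portmanteau inequality (applicable thanks to the local uniform mass bounds intrinsic to pmGH-converging $\RCD(0,N)$ spaces) then yields
\[
\mu(\bar B_R(z_0))\ge(\avr(Y)+\rho)\,\omega_N R^N.
\]
Since $(Y,\mu)$ is a metric measure cone with tip $z_0$, the exact identity $\mu(\bar B_R(z_0))=\avr(Y)\omega_N R^N$ holds for every $R>0$, producing the contradiction $\rho\le 0$. The only delicate point is precisely this limiting step, which requires a \emph{lower} bound on $\mu$ of a ball from a lower bound on $\mm_{\sigma_n}$ of the same ball, opposite to the usual lower-semicontinuity direction for measures on open sets; the closed-ball Portmanteau inequality is the right tool, and the argument closes thanks to the \emph{exact} (not merely asymptotic) cone identity at the tip.
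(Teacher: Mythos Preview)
Your proof is correct and follows essentially the same approach as the paper: both argue by contradiction, invoke the scales $\sigma_n$ and centers $z_n$ coming from Theorem~\ref{thm:general RCD}, and use the volume hypothesis~\eqref{eq:eps not a cone} together with Bishop--Gromov to contradict the exact cone identity $\mu(B_R(z_0))=\avr(Y)\omega_N R^N$ at the tip. The only cosmetic difference is that the paper assumes $b_n\ge \eps$ (hence $\sigma_n\ge c>0$) and reads off the contradiction directly from the convergence $\mu_n(B_{c\rho}(z_n))\to \mu(B_{c\rho}(z_0))$, whereas you phrase it as proving $\sigma_n\to 0$ and make the Portmanteau step on closed balls explicit; these are the same argument.
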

\begin{proof}
{ The equivalence of the two statements in \eqref{eq:refined stability part 1} (up to decreasing $\delta$) follows immediately from the fact that, by Theorem \ref{thm:qualitative Sob} we can take $|a|= c_{N,p}\|u\|_{L^{p^*}(\mm)}\avr(\X)^{-1} b^\frac{(p-1)(N-p)}{p^2},$ and in particular
$$|u_{a,b,z_0}|\le |u_{a,b,z_0}(z_0)|=a.$$
}
 We now argue by contradiction. Suppose the statement is false. Then there exist  $\eps>0$, $\rho$, a sequence $\delta_n\to 0$,  $\RCD(0,N)$-spaces  $(\X_n,\sfd_n,\mm_n)$  with ${\sf AVR}(\X_n)\ge V$ satisfying \eqref{eq:eps not a cone} and functions $u_n\in W^{1,p}_{loc}(\X_n)$ such that 
  \[
    \frac{\| u_n\|_{L^{p^*}(\mm_n)}}{\| \nabla u_n\|_{L^p(\mm_n)}} - {\sf AVR}(\X_n)^{-\frac 1N}S_{N,p}\to 0.
  \]
  and for every choice of $a,b,z_0$ such that 
  $  \| \nabla (u_n - u_{a,b,z_0})\|_{L^{p}(\mm_n)} \le \eps \| \nabla u\|_{L^{p}(\mm_n)}$ it holds that   $b\ge \eps.$
Arguing as in the proof of Theorem \ref{thm:qualitative Sob} and up to passing to a subsequence, we can find numbers $\sigma_n>0$ and points $z_n\in \X_n$ such that the spaces $(Y_n,\rho_n,\mu_n,z_n)\coloneqq (\X_n,\sigma_n\sfd_n,\sigma_N^{-N}\mm_N,z_n)$ pmGH-converge to a $\RCD(0,N)$ cone $(Y,\rho,\mu,z_0)$, with tip $z_0,$ $\avr(Y)=\lim_n \avr(\X_n)$ and the functions   $v_n \coloneqq   a_n (1+ b_n\sfd_n(\cdot,z_n)^\frac{p}{p-1})^{\frac{p-N}{p}}$
satisfy
\[
       \limsup_{n\uparrow\infty} \frac{\| \nabla ( u_n - v_n )\|_{L^p(\mm_n)}}{\|\nabla u_n\|_{L^p(\mm_n)}} =0,
    \] 
    where $b_n= b\sigma_n^\frac{p}{p-1}$ and $a_n= a\sigma_n^\frac{n-p}{p}$ for some $a>0,$ $b>0$. By assumption we must have $b_n\ge \eps$ and so $\sigma_n\ge c>0$ for some constant $c>0$ independent of $n.$ Hence by Bishop-Gromov monotonicity
    \begin{align*}
         \lim_{n\uparrow\infty} \avr(\X_n)&=\avr(Y)=\frac{\mu(B_{c\rho}(z))}{\omega_N(c\rho)^N}=\lim_{n\uparrow\infty}\frac{\mu_n(B^{Y_n}_{c\rho}(z_n))}{\omega_N(c\rho)^N}\\
         &=\lim_{n\uparrow\infty} \frac{\mm_n(B_{c\rho/\sigma_n}(z_n))}{\omega_N(c\rho/\sigma_n)^N}\ge \lim_{n\uparrow\infty} \frac{\mm_n(B_{\rho}(z_n))}{\omega_N\rho^N}\ge \lim_{n\uparrow\infty}\avr(\X_n)+\rho,
    \end{align*}
    which is a contradiction.
\end{proof}

\begin{remark}
    Condition \eqref{eq:eps not a cone} is sharp in the following sense. If there {exist sequences} $x_i \in \X$ and $r_i\to 0^+$ such that 
    \begin{equation}\label{eq:almost conical small balls}
         \frac{\mm(B_{r_i}(x_i))}{\omega_N {r_i}^{N}}\le \avr(\X)+r_i,
    \end{equation}
    then for any $b>0$ the functions $u_i= (1+b\sfd(\cdot,x_i)^{\frac{p}{p-1}})^\frac{p-N}{p}$  are extremizing for the Sobolev inequality in $\X$. Indeed  a suitable subsequence  of $(\X,\sfd,\mm,x_i)$ would pmGH-converge to a limit space $(\X_\infty,\sfd_\infty,\mm_\infty,x_\infty)$. Moreover \eqref{eq:almost conical small balls} implies that $\X_\infty$ must be a cone with  $\avr(\X_\infty)=\avr(\X)$. Then any function $u= (1+b\sfd(\cdot,x_\infty)^{\frac{p}{p-1}})^\frac{p-N}{p}$ is {extremal} for the Sobolev inequality in $\X_\infty.$ (c.f.\ \cite[ii) in Theorem 1.5]{NobiliViolo24_PZ}). From this the fact that $u_i$ is extremizing follows from \cite[Lemma 7.2]{NobiliViolo24}. \fr
\end{remark}
\subsection{Nonnegative sectional lower bound}
In this part we explore refined stability results in Alexandrov spaces. Roughly, the following asserts that on a fixed ${\sf CBB}(0)$ space with Euclidean volume growth and which is not a cone, extremizing sequences for the Sobolev inequality have a diffused asymptotic behavior up to the isometries of the space. As a corollary, we also prove our main result Thereom \ref{thm: stability Sect manifold} on manifolds with nonnegative sectional curvature lower bounds.
\begin{theorem}\label{thm:stability CBB}
    Under the same assumptions and notations of Theorem \ref{thm:qualitative Sob}, suppose also that $(\X,\sfd)$ is an $N$-dimensional ${\sf CBB}(0)$ space with $N\in\N$ and that $(\X,\sfd,\HH^N)$ is not a metric measure cone. 
      Write also $\X=\rr^k\times Y$ for some $0\le k<N$ with $Y$ that does not split off any line and fix $y_0 \in Y$.   Then,  the same conclusion of Theorem \ref{thm:qualitative Sob} holds (letting $\delta$ depend also on $y_0$)   with the additional properties that
    \begin{equation}
        z_0\in \rr^k\times\{y_0\} \qquad\text{and}\qquad  b<\eps, \label{eq:up to isometries}
    \end{equation}
    (and in particular $  |u_{a,b,z_0}|\le \|u\|_{L^{p^*}(\mm)}\eps$).
\end{theorem}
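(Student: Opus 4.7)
I would argue by contradiction. Suppose the theorem fails for some $\eps>0$ and some $y_0\in Y$: there is a sequence $0\neq u_n\in W^{1,p}_{loc}(\X)$ whose Sobolev ratio converges to $\avr(\X)^{-1/N}S_{N,p}$ and for which no triple $(a,b,z_0)$ with $z_0\in\R^k\times\{y_0\}$ and $b<\eps$ achieves the gradient-stability estimate of Theorem~\ref{thm:qualitative Sob} at level $\eps$. Applying Theorem~\ref{thm:general RCD} (valid since $\X$ is ${\sf CBB}(0)$, hence $\RCD(0,N)$), I extract along a subsequence scales $\sigma_n>0$, basepoints $z_n\in\X$, and a pointed $\RCD(0,N)$ cone $(Y',\rho,\mu,z')$ with tip $z'$ such that $(\X,\sigma_n\sfd,\sigma_n^N\HH^N,z_n)$ pmGH-converges to $Y'$, $\avr(Y')=\avr(\X)$, and the rescaled $u_{\sigma_n}$ converge to the bubble $u_{a,b,z'}$ in $L^{p^*}$- and $L^p$-strong gradient. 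As in the proof of Theorem~\ref{thm:qualitative Sob}, the natural transplant $v_n:=u_{a_n,b_n,z_n}$ with $b_n=b\sigma_n^{p/(p-1)}$ achieves the gradient-stability estimate with error tending to zero; the task is to reshape it so that its center lies in $\R^k\times\{y_0\}$ and $b_n<\eps$.

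Next, I use the splitting $\X=\R^k\times Y$. Translating by $-x_n$ in the $\R^k$-factor (an isometry of $\X$) I may assume $z_n=(0,w_n)$ with $w_n\in Y$. Rescaling preserves the $\R^k$-factor, so $Y'\cong\R^k\times\tilde Y$, where $\tilde Y$ is the pmGH-limit of $(Y,\sigma_n\sfd_Y,\sigma_n^{N-k}\HH^{N-k},w_n)$. A cone splitting $\R^k$ factors as $\R^k\times(\text{cone})$, so $\tilde Y$ is a cone with some tip $\tilde z$ and $z'=(0,\tilde z)$; comparing asymptotic volume ratios gives $\avr(\tilde Y)=\avr(Y)$. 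Since $Y$ does not split a line, by Theorem~\ref{thm:asymptotic cone CBB} neither does the blow-down $Z$ of $Y$, so $Z$ has a unique tip.

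The main step is the joint claim $\sigma_n\to 0$ and $\sigma_n\sfd_Y(w_n,y_0)\to 0$. Assume toward contradiction $\sigma_n\ge c>0$ along a subsequence. If $(w_n)$ is precompact in $Y$, up to extraction $\tilde Y$ is isometric to a rescaling of $Y$, which is not a cone—contradicting that $\tilde Y$ is a cone. If instead $w_n\to\infty$, the ${\sf CBB}(0)$ Alexandrov rigidity forces any bounded-rescaling pointed limit at infinity to be a tangent-cone-like space (locally Euclidean at smooth directions of $Y$ and of its blow-down), whose AVR strictly exceeds $\avr(Y)$, contradicting $\avr(\tilde Y)=\avr(Y)$. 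Hence $\sigma_n\to 0$. Now by Theorem~\ref{thm:asymptotic cone CBB}, the blow-down $(Y,\sigma_n\sfd_Y,\sigma_n^{N-k}\HH^{N-k},y_0)$ at the \emph{fixed} basepoint $y_0$ converges to the pointed cone $(Z,\sfd_Z,\HH^{N-k},\tilde z)$ with $\tilde z$ its unique tip. On the other hand, the moving-basepoint blow-down converges to $(\tilde Y,\tilde z)$, and $(\tilde Y,\tilde z)=(Z,\tilde z)$ since $\tilde Y$ is a cone of AVR equal to $\avr(Y)$ with basepoint a tip (the latter being unique by non-splitting). Comparing the two convergences in a common realization, both $\iota_n(w_n)$ and $\iota_n(y_0)$ converge to $\tilde z$, so their rescaled distance $\sigma_n\sfd_Y(w_n,y_0)\to 0$.

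With these facts, $b_n<\eps$ for $n$ large, and the rescaled distance between the centers $(0,w_n)$ and $(0,y_0)$ tends to zero. Applying \cite[Lemma~7.2]{NobiliViolo24} exactly as in the proof of Theorem~\ref{thm:qualitative Sob}, the shifted bubble $\tilde v_n:=u_{a_n,b_n,(0,y_0)}$ converges in $L^{p^*}$-strong and in $L^p$-strong gradient to the same limit $u_{a,b,z'}$ as $v_n$, so Proposition~\ref{prop:linearity W1ploc convergence} yields $\|\nabla(v_n-\tilde v_n)\|_{L^p(\HH^N)}/\|\nabla u_n\|_{L^p(\HH^N)}\to 0$. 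For $n$ large, $\tilde v_n$ is then a bubble with center in $\R^k\times\{y_0\}$, with $b_n<\eps$, and satisfying the gradient-stability bound, contradicting the hypothesis. The principal difficulty lies in the joint claim above: on a generic $\RCD(0,N)$ non-cone space it is false (cf.\ Remark~\ref{rmk:counter}), and the argument uses in an essential way the uniqueness of the blow-down (Theorem~\ref{thm:asymptotic cone CBB}) together with the Alexandrov rigidity of tangent cones, i.e.\ the full strength of the ${\sf CBB}(0)$ hypothesis.
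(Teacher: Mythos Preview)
Your overall strategy---contradiction, Theorem~\ref{thm:general RCD}, reduction via the $\R^k$-factor, and re-centering at $(0,y_0)$ via \cite[Lemma~7.2]{NobiliViolo24} and Proposition~\ref{prop:linearity W1ploc convergence}---matches the paper's, and you correctly isolate the joint claim $\sigma_n\to 0$, $\sigma_n\sfd_Y(w_n,y_0)\to 0$ as the crux. However, your justification of this claim has a gap in exactly the place where the ${\sf CBB}(0)$ hypothesis enters.

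Your assertion that $(\tilde Y,\tilde z)=(Z,\tilde z)$ ``since $\tilde Y$ is a cone of AVR equal to $\avr(Y)$'' does not follow: uniqueness of the blow-down (Theorem~\ref{thm:asymptotic cone CBB}) concerns only limits at a \emph{fixed} basepoint and says nothing about the moving-basepoint limit when $\sigma_n\sfd_Y(w_n,y_0)\to\infty$. Your ``common realization'' step tacitly assumes this quantity stays bounded, which is precisely what you are trying to prove; the argument is circular in the unbounded case. (The same vagueness appears in your $\sigma_n\to 0$ step, where ``Alexandrov rigidity'' is invoked without a precise statement and the regime $\sigma_n\to\infty$ is not addressed.) The missing ingredient is \cite[Lemma~4.2]{AntonelliBrueFogagnoloPozzetta22}: pointed limits at infinity of a non-splitting ${\sf CBB}(0)$ space---and points off the tip in its asymptotic cone---have density strictly larger than $\avr$. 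The paper uses this twice. First, it establishes the uniform volume bound~\eqref{eq:eps not a cone} on $\X$, which then forces $\sigma_n\to 0$ by the argument of Theorem~\ref{thm:refined stability}. Second, assuming $\sigma_n\sfd(z_n,(0,y_0))\ge\delta>0$, it rescales instead by $r_n^{-1}$ with $r_n=\sfd(z_n,(0,y_0))$, so that $z_n$ lands on the unit sphere of the asymptotic cone $\X_\infty$; there \cite[Lemma~4.2~ii)]{AntonelliBrueFogagnoloPozzetta22} together with Theorem~\ref{thm:asymptotic cone CBB} gives density $\ge\avr(\X)+\delta'$, while computing the same density through the $\sigma_n$-rescaling (which converges to a cone of $\avr=\avr(\X)$) and Bishop--Gromov yields $\le\avr(\X)$. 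This two-scale density comparison is the step your plan is missing.
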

\begin{proof}
   We start by showing that \eqref{eq:eps not a cone} holds for some $\rho(\X)>0.$ Suppose the contrary, i.e.\ that there exist a sequence of points $x_i\in \X$ and a sequence of radii $r_i\to 0$ such that
    \begin{equation}
         \frac{\mm(B_{r_i}(x_i))}{\omega_N {r_i}^{N}}\le \avr(\X)+r_i.
    \end{equation}
   We claim that $x_i$ must be a diverging sequence. If this is not the case, up to passing to a subsequence, it converges to some $\bar x\in \X.$  For any $r>0$, Bishop-Gromov monotonicity implies
    \[
     \frac{\mm(B_{r}(\bar x))}{\omega_N {r}^{N}}=\lim_{i\uparrow \infty} \frac{\mm(B_{r}(x_i))}{\omega_N {r}^{N}}\le \liminf_{i\uparrow \infty} \frac{\mm(B_{r_i}(x_i))}{\omega_N {r_i}^{N}}\le\avr(\X).
    \]
    Hence $ \lim_{r\to 0^+}\frac{\mm(B_{r}(\bar x))}{\omega_N {r}^{N}}\le\avr(\X)$, which implies that $\X$ is a cone with tip $\bar x$, which is a contradiction.  Thus $x_i$ is a diverging sequence.
    Up to passing to a subsequence we have that  $(\X,\sfd,\HH^N,x_i)$ pmGH-converges to a limit space $(\X_\infty,\sfd_\infty,\mm_\infty,x_\infty)$.
    By combining \cite[Lemma 4.2]{AntonelliBrueFogagnoloPozzetta22} with Theorem \ref{thm:asymptotic cone CBB}, we must have that $\avr(\X_\infty)\ge \avr(\X)+\delta$ for some $\delta>0.$ Hence  $ \frac{\mm_\infty(B_{1}(x_\infty))}{\omega_N }\ge \avr(\X)+\delta,$ however by pmGH-convergence 
    \[
    \frac{\mm_\infty(B_{1}(x_\infty))}{\omega_N }\le \liminf_{i\uparrow \infty}  \frac{\mm(B_{1}(x_i))}{\omega_N }\le \liminf_{i\uparrow \infty} \frac{\mm(B_{r_i}(x_i))}{\omega_N {r_i}^{N}}\le\avr(\X),
    \]
    which is again a contradiction. We have thus shown that \eqref{eq:eps not a cone} holds.

  We now pass to the proof of the statement  by contradiction. Fix $y_0\in Y.$ Suppose that there exist $\eps>0$, a sequence $\delta_n\to 0$ and functions $u_n\in W^{1,p}_{loc}(\X)$ such that 
  \begin{equation}
        \frac{\| u_n\|_{L^{p^*}(\HH^N)}}{\| \nabla u_n\|_{L^p(\HH^N)}} - {\sf AVR}(\X)^{-\frac 1N}S_{N,p}\to 0,
  \end{equation}
  and for every choice of $a,b,z$ such that 
  $ \| \nabla (u_n - u_{a,b,z})\|_{L^{p}(\HH^N)} \le \eps\| \nabla u\|_{L^{p}(\HH^N)}$,
  either
  \begin{equation}\label{eq:contradiction in CBB}
        b\left(1+ {\rm dist}\big(\rr^k\times \{y_0\},z\big)^\frac{p}{p-1}\right)\ge \eps,
  \end{equation}
  or 
  \begin{equation}\label{eq:contradiction in CBB 2}
      z\notin \rr^k\times \{y_0\}
  \end{equation}
  hold.
  By the same argument in the proof of Theorem \ref{thm:qualitative Sob}  by taking $\X_n=\X$ (which we can repeat because we showed that {the assumption} \eqref{eq:eps not a cone} is true for some $\rho>0$)  we can find a sequence $\sigma_n\to 0^+$ and points $z_n\in \X$ such that the spaces $(Y_n,\rho_n,\mu_n,z_n)\coloneqq (\X_n,\sigma_n\sfd,\sigma_n^N\HH^N,z_n)$, where $\HH^N$ is the Hausdorff measure with respect to $\sfd,$ pmGH-converge to a $\RCD(0,N)$ cone $(Y,\rho,\mu,z_0)$ with tip $z_0 \in Y$ and $\avr(Y)= \avr(\X)$. Moreover the functions   $v_n \coloneqq   a_n (1+ b_n\sfd(\cdot,z_n)^\frac{p}{p-1})^{\frac{p-N}{p}}$
satisfy
\begin{equation}\label{eq:ananas}
       \limsup_{n\uparrow\infty} \frac{\| \nabla ( u_n - v_n )\|_{L^p(\HH^N)}}{\|\nabla u_n\|_{L^p(\HH^N)}} =0,
\end{equation}
    where $a_n = a \sigma_n^{d/p^*}$ and $b_n= b\sigma_n^\frac{p}{p-1}$ for some $a\in\R,b>0$. Suppose that \eqref{eq:contradiction in CBB} holds for infinitely many $n$ and so for all of them, up to pass to a subsequence. Since $\sigma_n\to 0$ we must have (up to a further subsequence)
    \[
    \sigma_n{\rm dist}\big(\rr^k\times \{y_0\},z_n\big)\ge (\eps/(2b))^\frac{p-1}{p}\eqqcolon\delta>0.
    \]
Up to isometries we can assume that $z_n=(0,x_n)$ for some $x_n\in Y$ and so $ \sigma_n\sfd(z_n,(0,y_0))\ge \eps$.
In particular, since $\sigma_n\to 0$, we have $r_n\coloneqq \sfd(z_n,(0,y_0))\to +\infty.$ Up to passing to a subsequence   $ (\X,r_n^{-1}\sfd,\HH^N/r_n^N,(0,y_0))$  converge to an asymptotic cone  $(\X_\infty,\sfd_\infty,\mm_\infty,y_\infty)$, where $\HH^N$ is with respect to $\sfd$. Combining \cite[ii) in Lemma 4.2]{AntonelliBrueFogagnoloPozzetta22} with Theorem \ref{thm:asymptotic cone CBB}, we have that  for all $z\in \X_\infty$ with $\sfd_\infty(z,y_\infty)=1$ it holds
\[
\lim_{r\to 0^+} \frac{\mm_\infty(B_r(z))}{r^N\omega_N}\ge  \avr(\X)+\delta
\]
for some $\delta>0$. By scaling, we note that $z_n\to z_\infty$ for some $z_\infty \in \X_\infty$ with $\sfd_\infty(z_\infty,y_\infty)=1$. Additionally by measure convergence
\begin{align*}
    \frac{\mm_\infty(B_r(z_\infty))}{r^N\omega_N}&\le \liminf_{n\uparrow\infty} \frac{\HH^N(B_{r r_n}(z_n))}{(r r_n)^N\omega_N}
=\liminf_{n\uparrow\infty} \frac{\HH^N(B_{r \sigma_nr_n/\sigma_n}(z_n))}{(r r_n\sigma_n/\sigma_n)^N\omega_N}\le \liminf_{n\uparrow\infty} \frac{\HH^N(B_{r \delta/\sigma_n}(z_n))}{(r \delta/\sigma_n)^N\omega_N}\\
&=\liminf_{n\uparrow\infty} \frac{\sigma_n^N\HH^N(B_{r \delta/\sigma_n}(z_n))}{(r \delta)^N\omega_N}=\frac{\mu(B^Y_{r\delta}(z_0))}{(r\eps)^N\omega_N},
\end{align*}
for all $r>0$.  Sending  $r\to 0$ we deduce that $\avr(\X)+\delta\le \avr(\X)$ which is a contradiction. Note that in fact we showed that $ \sigma_n{\rm dist}\big(\rr^k\times \{y_0\},z_n\big)\to 0.$ 

All in all, for $u_n$ we have showed that \eqref{eq:contradiction in CBB 2} must hold with the choice $a_n,b_n,z_n$ for infinitely many $n.$ However, since  $ \sigma_n{\rm dist}\big(\rr^k\times \{y_0\},z_n\big)\to 0$, the spaces $(\X_n,\sigma_n\sfd,\sigma_n^N\HH^N,(0,y_0))$, still pmGH-converge to the same cone $(Y,\rho,\mu,z_0)$ with the same tip $z_0$. Therefore we deduce that the functions  $v_n \coloneqq   a_n (1+ b_n\sfd(\cdot,(0,y_0))^\frac{p}{p-1})^{\frac{p-N}{p}}$  still verify \eqref{eq:ananas} (as we did in the very end of the proof of Theorem \ref{thm:general RCD}). In particular, the choice $a_n,b_n,(0,y_0)$  is then admissible, and does not satisfy neither \eqref{eq:contradiction in CBB} nor \eqref{eq:contradiction in CBB 2}. This gives a contradiction and concludes the proof.
\end{proof}
\begin{remark}\label{rmk:counter}
    \rm We remark that we cannot expect the first in \eqref{eq:up to isometries} to hold if $\X$ is only assumed an ${\sf RCD}(0,N)$ space with Euclidean volume growth and which is not a cone. Indeed, a counterexample is given by a carefully chosen warped metric $g$ in the four dimensional manifold $M=\R\times [0,+\infty)\times \S^2$ studied in \cite[Pag. 913-914]{KasueWashio90} with the following properties:
    \[
        {\sf Ric}_g\ge 0,\qquad  {\sf AVR}(M)>0,\qquad (M,g)\text{ does not split isometrically a line},
    \]
    but such that
    \[
        M\ni (s,p)\mapsto (s+t,p)\in M \qquad \text{is an isometry for any $t>0$}.
    \]
    {  Notice that, if $(u_n )\subset W^{1,p}_{loc}(M)$ is \emph{any} extremizing sequence for the Sobolev inequality in $(M,g)$, then also $\tilde u_n\coloneqq u_n ( \cdot -t_n, \cdot)$ is extremizing for every sequence $|t_n|\uparrow \infty$. However, for any fixed point $z_0=y_0 \in M$ and numbers $a\in \rr$, $b>0,$ it {clearly} holds that
    \[
    \liminf_{n\to +\infty} \frac{\|\nabla ( \tilde u_n- u_{a,b,z_0})\|_{L^p(M)}}{\|\nabla \tilde u_n\|_{L^p(M)}}\ge  1.
    \]}
    \fr
\end{remark}

\medskip 
\noindent\textbf{Acknowledgments}. F.N. is  a member of INDAM-GNAMPA. This work was supported by the European Union (ERC, ConFine, 101078057) and by the  MIUR Excellence Department Project awarded to the Department of Mathematics, University of Pisa, CUP I57G22000700001. Both authors would like to thank F. Glaudo for bringing to our attention a question related to Theorem \ref{thm: stability Sect manifold}.

\medskip

\noindent\textbf{Data availability}. No data was generated or used during the study.

\def\cprime{$'$} \def\cprime{$'$}

\end{document}